\numberwithin{equation}{section}
\date{Summer 2016, compiled \today} 
\author{Pietro Caputo}
\address[P.~Caputo]{Dipartimento di Matematica,
  Universit\`a Roma Tre, Italy}
\email{caputo(at)mat.uniroma3.it}
\urladdr{http://www.mat.uniroma3.it/users/caputo/}
\author{Alistair Sinclair}
\address[A.~Sinclair]{Computer Science Division, 
Soda Hall,  
University of California 
Berkeley, CA 94720-1776, USA}
\email{sinclair(at)cs.berkeley.edu}
\urladdr{https://www.cs.berkeley.edu/~sinclair/}
\DeclareMathSymbol{\leqslant}{\mathalpha}{AMSa}{"36} 
\DeclareMathSymbol{\geqslant}{\mathalpha}{AMSa}{"3E} 
\DeclareMathSymbol{\eset}{\mathalpha}{AMSb}{"3F}     
\renewcommand{\leq}{\;\leqslant\;}                   
\renewcommand{\geq}{\;\geqslant\;}                   
\newcommand{\sumtwo}[2]{\sum_{\substack{#1 \\ #2}}} 
\newcommand{\be}{\begin{equation}}
\def\thsp{\thinspace}
\def\tc{\thsp | \thsp}
\def\1{\ifmmode {1\hskip -3pt \rm{I}} \else {\hbox {$1\hskip -3pt \rm{I}$}}\fi}
\newcommand{\ind}{\mathbf{1}}
\newcommand\at[2]{\left.#1\right|_{#2}}
\newtheorem{theorem}{Theorem}[section] 
\newtheorem{lemma}[theorem]{Lemma} 
\newtheorem{proposition}[theorem]{Proposition} 
\newtheorem{corollary}[theorem]{Corollary} 
\newtheorem{remark}[theorem]{Remark} 
\newtheorem{conjecture}[theorem]{Conjecture} 
\newtheorem{definition}[theorem]{Definition}
\newcommand{\cA}{\ensuremath{\mathcal A}}
\newcommand{\cF}{\ensuremath{\mathcal F}} 
\newcommand{\cG}{\ensuremath{\mathcal G}}
\newcommand{\cP}{\ensuremath{\mathcal P}}
\newcommand{\cS}{\ensuremath{\mathcal S}}
\newcommand{\cX}{\ensuremath{\mathcal X}} 
\newcommand{\cZ}{\ensuremath{\mathcal Z}} 
\newcommand{\bbE}{{\ensuremath{\mathbb E}} }
\newcommand{\bbN}{{\ensuremath{\mathbb N}} }
\newcommand{\bbR}{{\ensuremath{\mathbb R}} }
\newcommand{\si}{\sigma} 
\newcommand{\wt}{\widetilde} 
\newcommand{\ent}{{\rm Ent} } 
\newcommand{\var}{{\rm Var} } 
\newcommand{\dert}{\frac{{\rm d}}{{\rm d} t}}
\let\a=\alpha \let\b=\beta   \let\d=\delta  \let\e=\varepsilon
 \let\g=\gamma     \let\k=\kappa  
\let\r=\rho   \let\t=\tau   
\let\D=\Delta   \let\G=\Gamma   
\let\O=\Omega      
\begin{document}

\title[Entropy production in nonlinear recombination models]
{Entropy production in nonlinear \\ recombination models}

\begin{abstract}
We study the convergence to equilibrium of a class of nonlinear recombination models. In analogy with Boltzmann's H theorem from kinetic theory, and in contrast with previous analysis of these models, convergence is measured in terms of relative entropy. The problem is formulated within a general framework that we refer to as Reversible Quadratic Systems. Our main result is a tight quantitative estimate for the entropy production functional. Along the way we establish some new entropy inequalities generalizing Shearer's and related inequalities.     
\end{abstract}

\thanks{This work was done while P.C.\ was visiting the Simons Institute for the Theory of Computing, supported by the Simons Foundation.
A.S.\ was supported in part by NSF grant CCF-1420934.}

\keywords{Entropy; Functional inequalities; Population dynamics; Nonlinear equations; Boltzmann equation}
\subjclass{%
92D25; 
39B62; 
34G20; 
}                             
                             
\maketitle

\thispagestyle{empty}

\section{Introduction}
Recombination models based on random mating have a wide range of applications in the natural sciences and play an important role in the analysis of genetic algorithms.  The following nonlinear system is a commonly studied model. Let $\O=X_1\times\cdots\times X_n$ denote the set of sequences of length $n$, such that the $i$-th element of the sequence takes values in a given finite space $X_i$. A sequence  $\si\in\O$ is written as $\si=(\si_i,\,i\in[n])$, where $[n]=\{1,\dots,n\}$ is the set of {\it loci}, or {\it sites}, and $\si_i\in X_i$ for all $i\in[n]$.  Given a subset $A\subset[n]$, and a $\si\in\O$ we write $\si_A$ for the $A$-component of $\si$, i.e., the subsequence $(\si_i,\,i\in A)$. If $(\si,\eta)\in\O\times \O$ is a pair of sequences, the {\it recombination\/} at $A$ consists in exchanging the $A$-component of $\si$ with the $A$-component of $\eta$. This defines the map 
$$
(\si,\eta)\mapsto (\eta_A\si_{A^c},\si_A\eta_{A^c})\,.
$$ 
If the original pair $(\si,\eta)$ is obtained by sampling independently from a probability measure $p$ on $\O$, then the sequence $\eta_A\si_{A^c}$ is distributed according to  $p_A\otimes p_{A^c}$, the product measure obtained from the marginals of $p$ on $A$ and $A^c$. By choosing the set $A$ at random according to some probability distribution $\nu$ one obtains the quadratic dynamical system 
\begin{equation}\label{ptilde}
p\mapsto \Psi[p] :=
 \sum_{A} \nu(A) \,(p_A\otimes p_{A^c}).
\end{equation}
 The discrete time evolution of the initial distribution $p$ is then defined by iteration of the map $\Psi$, namely $p^{(k)}= \Psi[p^{(k-1)}]$, $k\in\bbN$, $p^{(0)}=p$. Analogously, in continuous time one has the quadratic differential equation
 \begin{equation}\label{pct}
\dert \,p_t=
 \sum_{A} \nu(A) \,(p_{t,A}\otimes p_{t,A^c} - p_t)\,,\quad t\geq 0,
\end{equation}
with the initial condition $p_0=p$. Here $p_t$ is the probability measure describing the state at time $t$, and $p_{t,A}$ denotes its marginal on $A$. When $A=\{i\}$ is a single site we write $p_{t,i}$ for the marginal at $i$. It is not difficult to see that the map \eqref{ptilde} preserves the single site marginals, so that $p_{t,i}=p_{0,i}$ for all $t\geq 0$ and for all $i\in[n]$.  
The study of this model goes back to the pioneering work of Geiringer \cite{Gei}; 
see also \cite{Sinetal,Sinetal2,Baakeetal,Martinez} for more recent accounts. As emphasized in  \cite{Sinetal,Sinetal2} this model is a special case of the much larger class of so-called ``quadratic dynamical systems", which provides a rich family of discrete analogues of Boltzmann's equation from statistical physics. It is a classical result that, under an obvious nondegeneracy assumption on the distribution $\nu$, the system converges to the stationary state given by the product of the marginals of 
the initial state $p$; i.e., if $p_i=p_{0,i}$ denotes the marginal of $p$ at site $i$, then 
\begin{equation}\label{station}
\pi=\otimes_{i}\,p_i
\end{equation}
is the equilibrium distribution and one has convergence in distribution: $p^{(k)}\to \pi$, $k\to\infty$, and $p_t\to \pi$, $t\to\infty$. We shall be interested in the speed of convergence to equilibrium in both continuous and discrete time. 
We consider the following natural examples of the distribution $\nu$:
\begin{enumerate}[1)]
\item
Single site recombination: $\nu(A)=\frac1n\sum_{i=1}^n\ind(A=\{i\})$;
\item 
One-point crossover: 
$\nu(A)=\tfrac1{n+1}\sum_{i=0}^n\ind(A=J_i)$, where $J_0=\emptyset$, $J_i=\{1,\dots,i\}$, $i\geq 1$;
\item 
Uniform crossover: $\nu(A)=\frac1{2^n}$, for all $A\subset[n]$;
\item The Bernoulli($q$) model: for some $q\in[0,\tfrac12]$, $\nu(A)=q^{|A|}(1-q)^{n-|A|}$.\end{enumerate}
The Bernoulli($q$) model is a generalization of the uniform crossover model. In principle our method can be applied to other generalizations, such as the $k$-crossover model or the so-called Poisson model. However, to keep this work at a reasonable length, in what follows  we will restrict attention to the models listed above.
The first example generates a simple linear evolution, but the other choices produce genuinely nonlinear processes. 
Tight estimates on the speed of convergence in total variation norm of the associated discrete time processes were obtained in \cite{Sinetal2}. While the first example reduces to a standard coupon 
collecting argument, implying that the system mixes in $\Theta(n\log n)$ steps, the other cases require a finer coupling analysis. In particular, it is shown in \cite{Sinetal2} that one-point crossover mixes in $\Theta(n\log n)$ steps, while uniform crossover mixes in $\Theta(\log n)$ steps.
Further results on convergence to equilibrium in total variation norm together with the analysis of the quasi-stationary measure were recently obtained in \cite{Martinez}. 

In this paper we focus on convergence to equilibrium in terms of {\it entropy}.
Here we recall that the {\it relative entropy\/} of $p$ with respect to $\mu$, for two probability measures $p,\mu$ on $\O$, is given by 
$$H(p\tc\mu) =  \sum_{\si\in\O} p(\si)\log (p(\si)/\mu(\si))\,,$$
with  $H(p\tc\mu)=+\infty$ if there exists $\si\in\O$ with $\mu(\si)=0$ and $p(\si)\neq 0$. 
Recall also Pinsker's inequality, asserting that 
 \begin{equation}\label{pinsker}
\|p-\mu\|\leq \sqrt {\tfrac12\, H(p\tc \mu)},
\end{equation}
where $\|p-\mu\|=\tfrac12\sum_{\si\in\O}|p(\si)-\mu(\si)|$ denotes the total variation distance. 
Now, for any probability measure $p$ on $\O$,
the convergence $p_t\to \pi$ implies that the relative entropy $H(p_t\tc\pi)$ satisfies $H(p_t\tc\pi)\to 0$, $t\to\infty$. By analogy with the Boltzmann equation, it is very natural to study the rate of exponential decay of relative entropy or, equivalently, the existence of an inequality of the form
  \begin{equation}\label{entroprod}
\dert H(p_t\tc\pi)\leq - \d\, H(p_t\tc\pi)\,,
\end{equation}
for some $\d>0$ independent of $t$. The bound \eqref{entroprod} is often referred to as an 
{\it entropy production\/} estimate. The investigation of the corresponding inequality for the Boltzmann equation, stimulated by a famous conjecture of Cercignani,  is at the heart of many recent spectacular developments in kinetic theory; see, e.g., \cite{Cercignaniconj} for a survey. To the best of our knowledge the inequality \eqref{entroprod} has not been investigated for the recombination models introduced above. The main purpose of this paper is to initiate this study and provide sharp estimates of the constant $\d$ in these models.  We also discuss possible generalizations and make some preliminary steps towards more general reversible quadratic systems displaying {\it non-product\/} equilibrium measures, such as nonlinear versions of the stochastic Ising model; see Section~\ref{examples} below.  

To describe our main results it is convenient to reformulate the 
entropy production estimate in terms of suitable functional inequalities. Let $\pi$ be a product measure on $\O$ of the form \eqref{station}. For any nonnegative function $f:\O\mapsto[0,\infty)$, define the entropy functional
\begin{equation}\label{entrop}
\ent(f)=\pi[f\log f] -\pi[f]\log \pi[f].
\end{equation}
For any $A\subset[n]$, let $f_A$ 
denote the function
\begin{equation}\label{fa}
f_A(\si)=\sum_{\eta\in\O}\pi(\eta)f(\si_A\eta_{A^c});
\end{equation}
note that $f_A$ depends only on~$\si_A$. 
When $A=\{i\}$ for some $i\in[n]$, we simply write $f_i$ for $f_{\{i\}}$. 
Let $\cS_\pi$ denote the set of all $f:\O\mapsto[0,\infty)$ such that $\pi[f]=1$ and $f_i=1$ for all $i\in[n]$. Notice that $\cS_\pi$ is precisely the set of all functions $f$ of the form $f=p/\pi$, where $p$ is any probability measure on $\O$ satisfying~\eqref{station}; i.e., $f$ is the density of~$p$ with respect
to~$\pi$ and $p$ has the same marginals as $\pi$.  Moreover, $f_Af_{A^c}$ is the density of $p_A\otimes p_{A^c}$ with respect to $\pi$. 
Given a distribution $\nu$ over subsets of $[n]$ we call $\d(\pi,\nu)$ the largest constant $\d\geq 0$ such that 
the inequality
\begin{equation}\label{bord}
 \sum_{A} \nu(A)\,\pi\left[(f_A f_{A^c}-f)\log {\frac{f_A f_{A^c}}f}\right]\geq \d\,\ent(f)
\end{equation}
holds for all $f\in\cS_\pi$. As we shall see, inequality \eqref{bord} coincides with \eqref{entroprod} when $f=p_t/\pi$ is the density of $p_t$ with respect to $\pi$. Define also 
\begin{equation}\label{bord2}
\d(\nu)=\inf_\pi \d(\pi,\nu)\,,
\end{equation}
where the infimum is taken over all product measures on $X_1\times\cdots\times X_n$ and over all possible underlying finite spaces $X_i$. 
\begin{theorem}\label{main}
The recombination models defined above satisfy the following bounds:
\begin{enumerate}[1)]
\item
Single site recombination: $\tfrac2n +O(n^{-2}) 
\geq \d(\nu)\geq \tfrac1{n-1}$;
\item 
One-point crossover: $\tfrac4n +O(n^{-2})\geq \d(\nu)\geq \tfrac1{n+1}$;
\item 
Uniform crossover: $\tfrac4n +O(n^{-2})\geq \d(\nu)\geq \tfrac{1-2^{-n+1}}{n-1}$;
\item Bernoulli($q$) model: $\tfrac{4(1-\left(1-q/2\right)^n 
)}n +O(n^{-2})\geq \d(\nu)\geq\tfrac{1-(1-q)^{n}-q^n}{n-1}$.\end{enumerate}
\end{theorem}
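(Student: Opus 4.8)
The plan is to translate the functional in \eqref{bord} into information-theoretic quantities, extract the lower bounds from sharpened subadditivity (Shearer/Han-type) inequalities, and obtain the upper bounds from explicit trial densities.

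First I would record, for $f=p/\pi\in\cS_\pi$ and any $A$, the identity
$$\pi\!\left[(f_Af_{A^c}-f)\log\frac{f_Af_{A^c}}{f}\right]=H(p\tc p_A\otimes p_{A^c})+H(p_A\otimes p_{A^c}\tc p),$$
i.e. each summand in \eqref{bord} is the symmetrized relative entropy between $p$ and its recombination $p_A\otimes p_{A^c}$. Since $f_A=\pi[\,f\mid\si_A]$ and $\pi$ is a product measure, the first term is the mutual information across the cut and equals $\ent(f)-\ent(f_A)-\ent(f_{A^c})$, where $\ent(f_S)=H(p_S\tc\otimes_{i\in S}p_i)$; the second term is nonnegative. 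Thus \eqref{bord} is equivalent to
$$\sum_A\nu(A)\big[\ent(f)-\ent(f_A)-\ent(f_{A^c})\big]+\sum_A\nu(A)\,H(p_A\otimes p_{A^c}\tc p)\;\geq\;\d\,\ent(f),$$
and everything is governed by the average of marginal entropies $\sum_A\tilde\nu(A)\ent(f_A)$, with $\tilde\nu(A)=\nu(A)+\nu(A^c)$.

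For the lower bounds I would discard the nonnegative second sum and prove the entropy inequality $\sum_A\tilde\nu(A)\,\ent(f_A)\leq(1-\d)\,\ent(f)$ for all $f\in\cS_\pi$, with the stated $\d$. The map $S\mapsto\ent(f_S)$ is monotone and supermodular and, crucially, $f\in\cS_\pi$ forces $\ent(f_{\{i\}})=0$; it is precisely this vanishing of the single-site terms that yields a strict improvement over classical Shearer/Han. For single-site recombination the inequality reads $\sum_{i}\ent(f_{[n]\setminus i})\leq\frac{n(n-2)}{n-1}\,\ent(f)$, sharpening Han's bound $\sum_i\ent(f_{[n]\setminus i})\leq(n-1)\ent(f)$. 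For $n=3$ it follows at once from the three supermodularity relations $\ent(f_{[n]\setminus i})+\ent(f_{[n]\setminus j})\leq\ent(f)+\ent(f_{[n]\setminus\{i,j\}})$ together with $\ent(f_{\{k\}})=0$; for general $n$ I would telescope this down the subset lattice by induction, each step trading a ``remove-$k$'' average for a ``remove-$(k{+}1)$'' average via supermodularity and cashing in the vanishing singletons at the bottom. For the crossover models the same scheme is run with the block-size-dependent weights $\tilde\nu$; the self-terms $A=\emptyset,[n]$ (where $\ent(f_\emptyset)=0$ and $\ent(f_{[n]})=\ent(f)$) produce the corrections $2^{-n+1}$ and $(1-q)^n+q^n$ in the constants.

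For the upper bounds I would, for each model, exhibit a density $f\in\cS_\pi$ driving the ratio of the left-hand side of \eqref{bord} to $\ent(f)$ down to the quoted value. The basic mechanism, cleanest for single-site recombination, is to correlate a pair of coordinates through a near-deterministic coupling on a large alphabet: this inflates $\ent(f)$ while keeping the symmetrized divergence across a separating cut comparable to it, so the ratio tends to the probability $\tfrac2n$ that the pair is separated. For one-point, uniform and Bernoulli crossover I would use model-specific trial densities tuned to the cut distribution $\nu$—balancing the gain in $\ent(f)$ against the divergence created by the many cuts that split a correlated block—to produce the $O(n^{-1})$ constants $\tfrac4n$, $\tfrac4n$ and $\tfrac{4(1-(1-q/2)^n)}{n}$, the $O(n^{-2})$ terms being finite-alphabet corrections.

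The main obstacle is the family of generalized Shearer inequalities behind the lower bounds: fixing the exact constant and proving $\sum_A\tilde\nu(A)\ent(f_A)\leq(1-\d)\ent(f)$ for the genuinely non-uniform weights of one-point, uniform and Bernoulli crossover. In contrast to the classical case, equality here is approached by strongly correlated, near-deterministic densities rather than by products, so the argument must quantify exactly how much the constraint $\ent(f_{\{i\}})=0$ buys; I expect the cleanest route is the lattice induction above, reducing each model to a one-dimensional comparison of the weight profile of $\tilde\nu$ against the entropy telescoping.
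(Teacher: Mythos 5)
Your lower-bound half follows, in outline, exactly the paper's route. The identity splitting each summand of \eqref{bord} into a forward plus a reverse relative entropy, the observation that the forward part equals $\ent(f)-\ent(f_A)-\ent(f_{A^c})$, and the reduction (after discarding the nonnegative reverse part) to the inequality $\sum_A\nu(A)\left[\ent(f_A)+\ent(f_{A^c})\right]\leq(1-\k)\,\ent(f)$ on $\cS_\pi$ is precisely Lemma \ref{le2}. Supermodularity of $S\mapsto\ent(f_S)$ together with the vanishing of the single-site terms is Lemma \ref{submole}, your $n=3$ computation is correct, and your ``lattice induction'' is what Lemmas \ref{delstar} and \ref{delstarg} carry out via the recursion $\varphi_k\geq\tfrac1k\binom{n}{k-1}\varphi_n+\tfrac{n-k}{k}\varphi_{k-1}$; so this part is the same approach, sketched but with the right skeleton. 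One caution: your heuristic that the trivial cuts $A=\emptyset,[n]$ ``produce'' the corrections $2^{-n+1}$ and $q^n+(1-q)^n$ is correct about where the constants come from, but it cannot be implemented cut-by-cut, since a single nontrivial cut can satisfy $\ent(f_A)+\ent(f_{A^c})=\ent(f)$ for $f\in\cS_\pi$ (take $\si_A$ independent of $\si_{A^c}$ under $p$, with correlations inside each block); the averaging over cuts built into the weighted Shearer inequality is essential, and for one-point crossover the paper additionally needs the non-obvious pairing of $J_i$ with $J_{i-1}^c$ (so that intersection is $\{i\}$ and union is $[n]$) rather than with $J_i^c$.

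The genuine gap is in the upper bounds for models 3) and 4). Your pair mechanism --- correlate two coordinates by an $\e$-smoothed near-deterministic coupling on an alphabet of size $m\to\infty$ --- is sound whenever the cut distribution separates a fixed pair with probability $O(1/n)$: for single-site recombination it gives $\d(\nu)\leq\tfrac2n(1+o(1))$, and for one-point crossover (an adjacent pair) it even gives $\d(\nu)\leq\tfrac1{n+1}(1+o(1))$, which is sharper than the stated $\tfrac4n$. But for uniform crossover a random cut separates any fixed pair with probability $\tfrac12$, and for Bernoulli($q$) with probability $2q(1-q)$, so your ratio is $\Theta(1)$, not $O(1/n)$; the construction fails exactly where it is needed. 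For these models you defer to unspecified ``model-specific trial densities,'' and that is where the real difficulty sits: any density whose entropy lives in global correlations (e.g.\ all coordinates nearly equal) makes the reverse term $H(p_A\otimes p_{A^c}\tc p)$ of order $\ent(f)$, because recombination across a typical cut produces configurations (constant on $A$, differently constant on $A^c$) that are exponentially unlikely under $p$ --- so the symmetrized divergence is \emph{not} comparable to the mutual information. The paper's resolution (Proposition \ref{sharp}) is the specific mixture $p=w^2B(1)+(1-w)^2B(0)+2w(1-w)B(\tfrac12)$ over the Bernoulli($w$) product with $w=2^{-n}$: the weight of the uniform component is tuned so that every recombined configuration $1_A0_{A^c}$ carries probability under $p$ comparable ($\asymp w^2$) to the mass that recombination assigns to it, which keeps the reverse term under control and yields, by one explicit computation valid for all four models, the bound $\tfrac{4(1-\D_\nu)}{n}+O(n^{-2})$. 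Without this construction, or an equivalent one, your proposal does not establish the upper bounds for uniform crossover and the Bernoulli($q$) model.
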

Inequality \eqref{bord} can be seen as a nonlinear version of a logarithmic Sobolev inequality; see, e.g.,  \cite{DS} for background on logarithmic Sobolev inequalities in the usual Markov chain setting.  
Remarkable works have been devoted to the study of functional inequalities of the form \eqref{bord} in the Boltzmann equation literature; see, e.g., \cite{CC,Vil} and \cite{villani2002review,Cercignaniconj} for an overview. 
In our combinatorial setting, we shall establish the entropy production inequalities of Theorem \ref{main} by proving a new set of inequalities for the entropy functional of a product measure. More precisely, we shall first observe (see Lemma \ref{le2}),  that $\d(\pi,\nu)\geq \kappa(\pi,\nu)$, where 
$\kappa(\pi,\nu)$ denotes the largest possible constant $\kappa>0$ such that the inequality
\begin{equation}\label{olent5}
 \sum_{A} \nu(A)\,\left(\ent(f_A) + \ent(f_{A^c})\right)\leq (1-\kappa)\,\ent(f)
\end{equation}
holds for all $f\in\cS_\pi$.  Define  
\begin{equation}\label{bord3}
\k(\nu)=\inf_\pi \k(\pi,\nu)\,,
\end{equation}
where the infimum is taken over all product measures on $X_1\times\cdots\times X_n$ and over all choices of the spaces~$X_i$.  Then, since $\d(\nu)\ge\k(\nu)$, we can obtain the lower bounds
in Theorem~\ref{main} by computing~$\k$.  The following result, which may be of independent interest, does this for all of the above recombination models.
\begin{theorem}\label{main2}
For the recombination models defined above, the constant $\k(\nu)$ satisfies: 
\begin{enumerate}[1)]
\item
Single site recombination: $ \k(\nu)=\tfrac1{n-1}$;
\item 
One-point crossover: $\k(\nu)= \tfrac1{n+1}$;
\item 
Uniform crossover: $\k(\nu)= \tfrac{1-2^{-n+1}}{n-1}$;
\item Bernoulli($q$) model: $\k(\nu)=\tfrac{1-(1-q)^{n}-q^n}{n-1}$.\end{enumerate}
\end{theorem}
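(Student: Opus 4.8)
The plan is to first recast \eqref{olent5} as a sharp inequality between information-theoretic quantities, and then to prove it by a Han-type (subadditivity) argument. Fix $f\in\cS_\pi$, put $p=f\pi$, and write $\mathcal H(\sigma_A)$ and $I(\sigma_A;\sigma_{A^c})$ for the Shannon entropy of the block $\sigma_A$ and the mutual information of $\sigma_A,\sigma_{A^c}$ under $p$. Since $\pi_A$ is the product of the one-site marginals of $p$, one computes $\ent(f_A)=\sum_{i\in A}\mathcal H(\sigma_i)-\mathcal H(\sigma_A)$, the total correlation (multi-information) of $p_A$; in particular $\ent(f)=\sum_{i}\mathcal H(\sigma_i)-\mathcal H(\sigma)$. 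Adding the two blocks and using $I(\sigma_A;\sigma_{A^c})=\mathcal H(\sigma_A)+\mathcal H(\sigma_{A^c})-\mathcal H(\sigma)$ yields the identity
\begin{equation}
\ent(f_A)+\ent(f_{A^c})=\ent(f)-I(\sigma_A;\sigma_{A^c}),
\end{equation}
so that, after integrating against $\nu$, inequality \eqref{olent5} is equivalent to $\sum_A\nu(A)\,I(\sigma_A;\sigma_{A^c})\ge\kappa\,\ent(f)$. As $f$ ranges over $\cS_\pi$ and $\pi$ over all product measures, $p$ ranges over all probability measures on all finite product spaces (with $\pi$ recovered from the marginals of $p$), so $\k(\nu)$ is exactly the largest $\kappa$ for which this mutual-information inequality holds for every $p$.

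For the upper bounds on $\k(\nu)$ I would exhibit one saturating configuration: take every $X_i=\{0,1\}$, let $\pi$ be uniform, and let $p$ be supported on the two constant sequences, so all $\sigma_i$ coincide with a single fair bit. Then $\ent(f)=(n-1)\log 2$, while $I(\sigma_A;\sigma_{A^c})=\log 2$ when $\emptyset\neq A\neq[n]$ and $=0$ otherwise. Hence $\sum_A\nu(A)\,I(\sigma_A;\sigma_{A^c})=(1-\nu(\emptyset)-\nu([n]))\log 2$, giving the uniform bound
\begin{equation}
\k(\nu)\le\frac{1-\nu(\emptyset)-\nu([n])}{n-1}.
\end{equation}
Substituting $\nu(\emptyset),\nu([n])$ for each of the four models ($0$ and $0$; $2^{-n}$ and $2^{-n}$; $(1-q)^n$ and $q^n$; and $\tfrac1{n+1},\tfrac1{n+1}$ for one-point crossover) reproduces exactly the four claimed values, so only the matching lower bounds remain.

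The heart of the matter is a sharp Han-type inequality for the entropy functional, which I would isolate as the key lemma: the averages $\bar H_k:=\binom{n}{k}^{-1}\sum_{|A|=k}\mathcal H(\sigma_A)$ form a \emph{concave} sequence in $k$. Granting this, the function $\binom{n}{k}^{-1}\sum_{|A|=k}\ent(f_A)=k\bar H_1-\bar H_k$ is convex in $k$, vanishes at $k=1$, and equals $\ent(f)$ at $k=n$, hence lies below its chord, which gives
\begin{equation}
\frac1{\binom{n}{k}}\sum_{|A|=k}\ent(f_A)\le\frac{k-1}{n-1}\,\ent(f),\qquad 1\le k\le n.
\end{equation}
For the three permutation-invariant models (single site, uniform, Bernoulli($q$), where $\nu(A)$ depends only on $|A|$) I then integrate $\ent(f_A)+\ent(f_{A^c})$ against $\nu$: the levels $k\in\{0,n\}$ contribute $(\nu(\emptyset)+\nu([n]))\,\ent(f)$, while the levels $1\le k\le n-1$ contribute at most $\tfrac{n-2}{n-1}(1-\nu(\emptyset)-\nu([n]))\,\ent(f)$ by the displayed bound; summing gives \eqref{olent5} with $\kappa=\tfrac{1-\nu(\emptyset)-\nu([n])}{n-1}$, matching the upper bound and yielding the three stated constants. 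The one-point crossover model is not permutation-invariant, and I would handle it by a telescoping chain-rule argument in the natural order: writing $\ent(f)=\sum_{k=2}^{n}I(\sigma_{J_{k-1}};\sigma_k)$ and using monotonicity of mutual information ($\sigma_k$ being a coordinate of $\sigma_{J_{k-1}^c}$) to get $I(\sigma_{J_{k-1}};\sigma_k)\le I(\sigma_{J_{k-1}};\sigma_{J_{k-1}^c})$ gives $\sum_{i=0}^{n}I(\sigma_{J_i};\sigma_{J_i^c})\ge\ent(f)$, hence $\kappa\ge\tfrac1{n+1}$.

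I expect the main obstacle to be the concavity lemma together with the extraction of the exact factor $\tfrac{k-1}{n-1}$. For concavity, i.e.\ $\bar H_{k+1}-\bar H_k\le\bar H_k-\bar H_{k-1}$, I would first establish $\bar H_k-\bar H_{k-1}=\mathbb E\,[\mathcal H(\sigma_j\mid\sigma_{A\setminus\{j\}})]$ for a uniformly random $k$-set $A$ and a uniform $j\in A$ (by a double-counting identity), and then couple the size-$(k+1)$ average to the size-$k$ one by deleting one further uniformly random coordinate $i\in B\setminus\{j\}$, so that "conditioning reduces entropy" forces the increments to be non-increasing. The delicate point is verifying that this deletion coupling sends the uniform law on $(B,j)$ to the uniform law on $(A,j)$, so that no constant is lost; this is a short but essential combinatorial check. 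The convexity/chord step then transfers the concavity into the sharp constant $\tfrac{k-1}{n-1}$, and it is precisely the tightness of that chord at the fully correlated configuration that makes the resulting $\k(\nu)$ coincide with the upper bound.
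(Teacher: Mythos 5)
Your proposal is correct and recovers all four constants, but its key lemma is genuinely different from the paper's. The paper also starts from sub\-modularity of $A\mapsto -\ent(f_A)$ (Lemma \ref{submole}), but then proves improved Shearer-type inequalities (Lemmas \ref{delstar} and \ref{delstarg}) via the level recursion $\varphi_k\geq \tfrac1k\binom{n}{k-1}\varphi_n+\tfrac{n-k}{k}\varphi_{k-1}$, with explicit coefficients $c(k,n),d(k,n)$ evaluated by a generating-identity trick; the Bernoulli($q$) model then requires a separate $\gamma$-weighted version of the lemma. You instead translate everything into Shannon entropies via $\ent(f_A)+\ent(f_{A^c})=\ent(f)-I(\sigma_A;\sigma_{A^c})$, prove concavity of $k\mapsto \bar H_k$ (your deletion coupling is sound: removing a uniform element of $B\setminus\{j\}$ does carry the uniform law on pairs $(B,j)$, $|B|=k+1$, to the uniform law on pairs $(A,j)$, $|A|=k$, as a short count confirms), and extract the per-level bound $\binom{n}{k}^{-1}\sum_{|A|=k}\ent(f_A)\leq \tfrac{k-1}{n-1}\ent(f)$ by a chord argument. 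What this buys is a single uniform statement for \emph{every} permutation-invariant $\nu$: integrating the per-level bound immediately gives $\kappa(\nu)\geq \tfrac{1-\nu(\emptyset)-\nu([n])}{n-1}$, matching your test-function upper bound, so models 1), 3), 4) are handled at once without any weighted lemma. What the paper's route buys is a general inequality for arbitrary sub-modular set functions (with the single-site terms $h(\{i\})$ kept explicit), which is of independent interest beyond the class $\cS_\pi$; note also that the two routes are close relatives, since your concavity statement and the paper's recursion are both consequences of sub-modularity organized level by level. For one-point crossover your argument is the paper's verbatim in information-theoretic dress: $I(\sigma_{J_{i-1}};\sigma_i)\leq I(\sigma_{J_{i-1}};\sigma_{J_{i-1}^c})$ is exactly sub-modularity applied to the pair $J_i$, $J_{i-1}^c$, and your saturating configuration (all coordinates equal to one fair bit) is the paper's test function $Z=(Z_0,\dots,Z_0)$ with $Z_0$ a uniform bit.
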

Inequality \eqref{olent5} expresses a generalized subadditivity property of the entropy functional for a product measure. As discussed in  Section \ref{proofs} below, it can be understood
as a refinement of Shearer's inequality for Shannon's entropy \cite{chung1986some}. We refer also to \cite{Friedgut,BB,MT} for other interesting extensions and applications of Shearer's inequality. Our setting is somewhat non-standard because of the restriction to $f\in\cS_\pi$. It is important to note that without it there cannot be a positive constant $\k$ in \eqref{olent5}. Indeed, 
by taking $f$ of the form $\prod_i f_i$ for some nontrivial single site marginals $f_i$, one has $\ent(f_A) + \ent(f_{A^c})=\ent(f)$ for any $A$ so that \eqref{olent5} can only hold with $\kappa=0$.



As mentioned earlier, the lower bounds in Theorem \ref{main} are a consequence of Theorem \ref{main2} and the fact that $\d(\nu)\geq \kappa(\nu)$. The upper bounds on the other hand will follow by exhibiting an explicit test function in \eqref{bord}. 

Finally, we turn to the consequences of Theorem \ref{main} for the convergence to equilibrium of both the continuous time and the discrete time evolutions. The following result shows that in all models considered above one has exponential decay in relative entropy with rate $\k(\nu)$  and that, if we insist on uniformity in the initial state $p$, this decay rate is optimal up to a constant factor.
 \begin{corollary}\label{corodio}
Consider the recombination model with distribution $\nu$, and let $\k(\nu)$ be as in Theorem \ref{main2}. For any initial state $p$, let $\pi$ denote the associated product measure \eqref{station}. Then, in continuous time one has 
\begin{equation}\label{dente1}
H(p_t\tc\pi)\leq e^{-\kappa(\nu)t}H(p\tc \pi)\,,\quad t\geq 0. 
\end{equation}
Similarly, in discrete time 
\begin{equation}\label{dente}
H(p^{(k)}\tc\pi)\leq (1-\kappa(\nu))^k\,H(p\tc \pi)\,, \quad k\in\bbN.
\end{equation}
Moreover, there exists an initial state $p$ 
such that, 
if $\g(\nu)$ denotes the upper bound on $\d(v)$ appearing in Theorem \ref{main}, then 
\begin{equation}\label{dente3}
\at{\dert H(p_{t}\tc\pi)}{t=0^+}
\geq -\g(\nu)H(p\tc \pi)\,,\quad H(p^{(1)}\tc\pi)\geq (1-\g(\nu))\,H(p\tc \pi)\,.
\end{equation}
These bounds are tight in the sense that there exists a constant $C>0$, independent of $n$, 
such that $ \g(\nu)\leq C\k(\nu)$ for all models 1)--3). In the case of the Bernoulli($q$) model, the bound $\g(\nu)\leq C\k(\nu)$ holds provided $q\geq n^{-2}$.  
\end{corollary}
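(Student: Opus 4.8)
The plan is to establish the three displayed estimates and the tightness claim in turn, drawing on Theorem~\ref{main2}, the inequality $\delta(\pi,\nu)\ge\kappa(\pi,\nu)$ from Lemma~\ref{le2}, and the extremal test function behind the upper bounds of Theorem~\ref{main}. Throughout I write $f=p/\pi$ and $g:=\Psi[p]/\pi=\sum_A\nu(A)\,f_Af_{A^c}$, using that $f_Af_{A^c}$ is the density of $p_A\otimes p_{A^c}$; since \eqref{ptilde} and \eqref{pct} preserve all single-site marginals, $f$ and all its iterates remain in $\cS_\pi$, so the functional inequalities apply at every step. For the continuous bound \eqref{dente1} I would use the identity recorded in the discussion of \eqref{entroprod}: with $f_t=p_t/\pi$, the derivative $\dert H(p_t\tc\pi)$ equals minus the left-hand side of \eqref{bord} evaluated at $f_t$. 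Combining the defining property of $\delta(\pi,\nu)$ with $\delta(\pi,\nu)\ge\kappa(\pi,\nu)\ge\kappa(\nu)$ gives $\dert H(p_t\tc\pi)\le-\kappa(\nu)H(p_t\tc\pi)$, and Gr\"onwall's lemma yields \eqref{dente1}.

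For the discrete bound \eqref{dente}, by convexity of $f\mapsto\pi[f\log f]$ on $\{\pi[f]=1\}$ together with $\sum_A\nu(A)=1$,
\[
\ent(g)\le\sum_A\nu(A)\,\ent(f_Af_{A^c})=\sum_A\nu(A)\bigl(\ent(f_A)+\ent(f_{A^c})\bigr),
\]
where the equality is the tensorization of entropy for a product measure (each factor has unit $\pi$-mean and depends on a distinct coordinate block). The right-hand side is precisely the left-hand side of \eqref{olent5}, hence $\le(1-\kappa(\nu))\ent(f)$; iterating over $k$ steps gives \eqref{dente}.

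For the lower bounds \eqref{dente3} I would take $p=f^\star\pi$, where $f^\star\in\cS_\pi$ (with its associated product measure $\pi$) is the test function realizing the upper bound, so that the left-hand side of \eqref{bord} at $f^\star$ is at most $\gamma(\nu)\ent(f^\star)$. The continuous assertion is then immediate from the derivative identity. For the discrete assertion the key is the inequality, valid for every $f\in\cS_\pi$,
\[
\ent(f)-\ent(g)\le \sum_A\nu(A)\,\pi\Bigl[(f_Af_{A^c}-f)\log\tfrac{f_Af_{A^c}}{f}\Bigr].
\]
To prove it I would first note $\pi[(f_Af_{A^c}-f)\log(f_Af_{A^c})]=0$ (because $\pi$ is product and $f_A,f_{A^c}$ live on disjoint blocks, so $\pi[f_Af_{A^c}\log f_A]=\pi[f\log f_A]$, and likewise for $f_{A^c}$), whence the right-hand side collapses to $\pi[(f-g)\log f]=\ent(f)-\pi[g\log f]$. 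The gap between the right and left sides is therefore $\pi[g\log(g/f)]=H(\Psi[p]\tc p)\ge0$, the relative entropy of $\Psi[p]$ with respect to $p$. Applying this at $f^\star$ gives $\ent(g)\ge(1-\gamma(\nu))\ent(f^\star)$, which is the discrete inequality in \eqref{dente3}. I expect this ``gap $=$ relative entropy'' step to be the only genuinely new point: it is what lets a single test function control both the differential rate and the one-step contraction; the rest is bookkeeping.

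Finally, for tightness I would substitute the explicit values from Theorems~\ref{main} and~\ref{main2}: for models 1)--3) the ratio $\gamma(\nu)/\kappa(\nu)$ tends to $2,4,4$ respectively, so a uniform constant $C$ exists. For the Bernoulli($q$) model I would bound the ratio using $1-(1-q/2)^n\le 1-(1-q)^n$ together with the estimates $1-(1-q)^n\ge q\ge n^{-2}$ and $q^n\le 2^{-n}$ (valid since $q\le\tfrac12$ and $q\ge n^{-2}$), so that $q^n/(1-(1-q)^n)\le n^2 2^{-n}\to0$ and $\gamma(\nu)/\kappa(\nu)$ stays bounded by a constant independent of $n$.
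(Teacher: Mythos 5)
Your argument follows essentially the same route as the paper for all three displayed bounds: the continuous-time decay \eqref{dente1} via the derivative identity $\dert H(p_t\tc\pi)=-D(f_t,f_t)$ combined with $\delta(\pi,\nu)\ge\kappa(\pi,\nu)\ge\kappa(\nu)$ and integration; the discrete-time contraction \eqref{dente} via convexity of $x\mapsto x\log x$, the factorization $\ent(f_Af_{A^c})=\ent(f_A)+\ent(f_{A^c})$, and Theorem~\ref{main2}; and the lower bounds \eqref{dente3} via the test measure of Proposition~\ref{sharp}. Your ``gap equals relative entropy'' step, namely $\pi[g\log g]-\pi[g\log f]=H(\Psi[p]\tc p)\ge 0$, is not actually a new ingredient: it is exactly the paper's variational-principle inequality \eqref{varprin}, phrased equivalently. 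All of this is correct.

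The one genuine problem is the tightness verification for the Bernoulli($q$) model. You invoke $1-(1-q)^n\ge q\ge n^{-2}$, which only yields $\kappa(\nu)\ge \frac{n^{-2}-q^n}{n-1}\gtrsim n^{-3}$. That is too weak: by Theorem~\ref{main}, $\gamma(\nu)=\frac{4(1-(1-q/2)^n)}{n}+O(n^{-2})$ carries an additive $O(n^{-2})$ correction, and $O(n^{-2})/n^{-3}=O(n)$, so your stated estimates give only $\gamma(\nu)/\kappa(\nu)=O(n)$, not a constant. (The failure is real in the critical regime $q\asymp n^{-2}$, where your bound on $1-(1-q)^n$ loses a factor of $n$.) The fix is to replace $1-(1-q)^n\ge q$ by the stronger bound $1-(1-q)^n\ge 1-e^{-nq}\ge\tfrac12\min(nq,1)$: for $q\ge n^{-2}$ this gives $1-(1-q)^n\ge\tfrac1{2n}$, hence, after absorbing $q^n\le 2^{-n}$ exactly as you do, $\kappa(\nu)\ge c\,n^{-2}$ for some absolute $c>0$. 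This dominates the $O(n^{-2})$ correction, the leading terms compare as in your models 1)--3), and the ratio is then bounded by a constant independent of $n$. With that single replacement your proof is complete and matches the paper's.
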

%
The proofs of Theorem \ref{main}, Theorem \ref{main2} and Corollary \ref{corodio} are deferred to Section \ref{proofs}. In Section \ref{setup} we formulate the problem of 
entropy production estimates in the much more general setting of reversible quadratic systems.  In Section \ref{examples} we present some examples of reversible quadratic systems, including the recombination models defined above.

\section{Reversible quadratic systems}\label{setup}
Following \cite{Sinetal}, we introduce a general framework, which includes the recombination models as  
special cases. We shall work in continuous time, but as we will see a translation to the discrete time 
setting is immediate\footnote{\cite{Sinetal} worked in discrete time, and also restricted attention to {\it symmetric\/} quadratic systems, which are reversible quadratic systems with $\mu$ uniform.}.  
\subsection{Setup}
Let $\cX$ be a finite space, and call $\cP(\cX)$ the set of probability measures on $\cX$. We refer to, e.g., \cite{norris} for background on Markov chains. 
A {\it reversible quadratic system (RQS)\/} on $\cX$ is defined by a
pair $(\cG,\mu)$ where \begin{enumerate}
\item $\cG$ is the infinitesimal generator of a Markov chain with state space $\cX\times \cX$, i.e., 
$\cG(\si,\si';\t,\t')\geq 0$ for all $(\si,\si')\neq (\t,\t')$ and 
\begin{equation}\label{gene1}
\cG(\si,\si';\si,\si')= -\sum_{(\t,\t')\neq (\si,\si')} \cG(\si,\si';\t,\t').
\end{equation}
\item $\mu\in\cP(\cX)$ satisfies $\mu(\si)>0$ for all $\si\in\cX$ and is such that $\mu\otimes \mu$ is 
reversible for $\cG$, i.e., for all $\si,\si',\t,\t'\in\cX$ one has
\begin{equation}\label{reve1}
\mu(\si)\mu(\si')\cG(\si,\si';\t,\t')= \mu(\t)\mu(\t')\cG(\t,\t';\si,\si').
\end{equation}
Moreover, we assume that $\cG$ has the symmetry
\begin{equation}\label{symme1}
\cG(\si,\si';\t,\t')= \cG(\si',\si;\t',\t),
\end{equation}
for all $\si,\si',\t,\t'\in\cX$.
 \end{enumerate}
 The quantity $\cG(\si,\si';\t,\t')$ is interpreted as the rate at which the pair $(\t,\t')$ is produced from a collision (or ``mating") between 
 $\si$ and $\si'$.  Below, $\cG$ will be mostly of the form $\cG=Q-\ind$, where $Q$ is a  Markov kernel on $\cX\times\cX$ with reversible measure $\mu\otimes\mu$. It is important to note that the Markov chain on the  product space $\cX\times \cX$ defining the RQS is not assumed to be irreducible, and therefore $\mu$ can be taken to be any of the possibly many measures such that $\mu\otimes\mu$ is reversible for $\cG$. Indeed, in most cases to be considered below, $\cG$ will not be irreducible and we shall use that freedom in selecting $\mu$.  
 
 The dynamics of the system are specified by the equation
  \begin{equation}\label{sys1}
  \dert \,p_t(\t)
= \sum_{\t'\in\cX} \Phi[p_t](\t,\t')\,,
\end{equation}
where we define, for any $p\in\cP(\cX)$,
\begin{equation}\label{qp}
\Phi[p](\t,\t'):=\sum_{\si,\si'\in\cX}
p(\si)p(\si')\cG(\si,\si';\t,\t'). 
\end{equation}
We consider equation \eqref{sys1} with the initial condition $p_0 = p$ for some given $p\in\cP(\cX)$.
By \eqref{gene1}, for any fixed $\si,\si'$ one has $\sum_{(\t,\t')}\cG(\si,\si';\t,\t')=0$, which implies the conservation law 
\begin{equation}\label{cl1}
\sum_{\t\in\cX}p_t(\t)=1,
\end{equation} for all  $t\geq 0$. Thus equation \eqref{sys1} gives a well defined evolution of the state of the system. In general there are many other conservation laws in the system \eqref{sys1}; see part 3 of Proposition \ref{genstat}  below. 
Existence and uniqueness of the solution of \eqref{sys1} for any $p\in\cP(\cX)$ can be established in a standard way; see, e.g., \cite{Baakeetal}.  

From reversibility \eqref{reve1}, setting $f(\si)=p(\si)/\mu(\si)$ one finds
\begin{equation}\label{sys02}
\Phi[p](\t,\t') = \sum_{\si,\si'\in\cX}
\mu(\t)\mu(\t')\cG(\t,\t';\si,\si')\left[f(\si)f(\si') - f(\t)f(\t')
\right].
\end{equation}
Therefore, in terms of $f_t(\si)=p_t(\si)/\mu(\si)$, equation \eqref{sys1} becomes
  \begin{equation}\label{sys2}
  \dert \,f_t(\t)
= \sum_{\si,\si',\t'\in\cX}\mu(\t')\cG(\t,\t';\si,\si')\left[f_t(\si)f_t(\si') - f_t(\t)f_t(\t')
\right].
\end{equation}
\begin{remark}\label{remo1}
RQS include linear evolutions associated to Markov chains as a special case. For instance if $\cG_0$ is a Markov generator with state space $\cX$, with reversible measure $\mu$, then the expression 
$$
\cG(\si,\si';\t,\t')= \cG_0(\si,\t)\ind(\si'=\t')+ \cG_0(\si',\t')\ind(\si=\t)\,,
$$ 
defines a RQS $(\cG,\mu)$. In this case the evolution is linear, and coincides with the Markov chain generated by $\cG_0$, i.e., $p_t = p\,e^{t\cG_0}$.  
\end{remark}

\begin{remark}\label{remo2}
Given the generator $\cG$, let $S(\t,\t';\si,\si') = \tfrac12(\cG(\t,\t';\si,\si')+\cG(\t',\t;\si,\si'))$. Note that this does not define a Markov generator, since off-diagonal elements need not be nonnegative. However, it is not hard to check that 
\begin{gather*}\cG_{\rm sym}(\t,\t';\si,\si') = \begin{cases}
S(\t,\t';\si,\si')& \text{if }(\si,\si')\neq (\t',\t), 
(\t,\t')\\ 
\cG(\t,\t';\t',\t) &  \text{if }(\si,\si')= (\t',\t)\\
S(\t,\t';\t,\t') +S(\t,\t';\t',\t)-  \cG(\t,\t';\t',\t)&  \text{if }(\si,\si')= (\t,\t')
\end{cases}
\end{gather*}
{\em does} define a Markov generator, i.e.,  for all $\t,\t'\in\cX$ one has  $\sum_{\si,\si'}\cG_{\rm sym}(\t,\t';\si,\si')=0$, and 
$\cG_{\rm sym}(\t,\t';\si,\si')\geq 0$ if $(\si,\si')\neq (\t,\t')$.
From \eqref{sys02} and \eqref{symme1} it is not hard to see that $\Phi[p](\t,\t')$ is unchanged if $\cG$ is replaced by $\cG_{\rm sym}$. 
In particular, we may and will assume without loss of generality, that the generator $\cG$ 
satisfies the symmetry
\begin{equation}\label{genesym} 
\cG(\t,\t';\si,\si')=\cG(\t',\t;\si,\si'), \qquad \text{if } (\si,\si')\neq (\t',\t),(\t,\t').
\end{equation}
By reversibility \eqref{reve1}, from \eqref{genesym} one has the further symmetry \begin{equation}\label{genesym2} \cG(\si,\si';\t,\t')=\cG(\si,\si';\t',\t) , \qquad \text{if } (\si,\si')\neq (\t',\t),(\t,\t').
\end{equation}
In most examples below, $\cG$ has the form $\cG=Q-\ind$ for some Markov kernel $Q$. In this case 
the symmetry \eqref{genesym} is equivalent to $ Q(\si,\si';\t,\t')=Q(\si,\si';\t',\t)$, for all $(\si,\si')\neq (\t',\t),(\t,\t')$.
\end{remark}
\subsection{Entropy and stationary states}
Reversible quadratic systems satisfy an analogue of Boltzmann's H theorem from kinetic theory, which we summarize in Proposition \ref{genstat} below. Call $\r\in\cP(\cX)$ {\it stationary\/} if $\Phi[\r]=0$. Note that by \eqref{sys02} the reference measure $\mu$ is stationary.
Let $\cP_+(\cX)$ denote the set of $\r\in\cP(\cX)$ such that $\r(\si)>0$ for all $\si\in\cX$. 
For any RQS $(\cG,\mu)$, one has the following facts. 
\begin{proposition}\label{genstat} 
\begin{enumerate}[1)] 
\item For any initial state $p\in\cP(\cX)$,
 \begin{align}
 \label{entroprod1}
 \dert H(p_t\tc \mu)= -D(f_t,f_t),\end{align}
where $f_t(\si)=p_t(\si)/\mu(\si)$ and, for any $f,g:\cX\mapsto[0,\infty)$:
\begin{align*}
D(f,g):=\frac14  \sum_{\si,\si',\t,\t'\in\cX}
\mu(\t)\mu(\t')\cG(\t,\t';\si,\si')\left[f(\si)f(\si') - f(\t)f(\t')
\right]\log\frac{g(\si)g(\si')}{g(\t)g(\t')}.
\end{align*}
In particular, $D(f_t,f_t)\geq 0$ for all $t\geq 0$ and $D(f_t,f_t)=0$ iff $p_t$ is stationary.

\item  $\r\in\cP(\cX)$ is stationary iff $f(\si)=\r(\si)/\mu(\si)$ satisfies
$$
f(\si)f(\si') = f(\t)f(\t'),
$$
for all $\si,\si',\t,\t'\in\cX$ such that $\cG(\t,\t';\si,\si')>0$. In particular,  $\r\otimes \r$ is reversible for $\cG$ iff $\r$ is stationary.

\item  If $\r\in\cP_+(\cX)$ is stationary, then for any initial condition $p\in\cP(\cX)$, one has
 \begin{align}
 \label{invar}
 \dert \sum_{\si\in\cX}p_t(\si)\log (\r(\si)/\mu(\si))=0.
\end{align}
\end{enumerate}
\end{proposition}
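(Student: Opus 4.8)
The plan is to prove the three parts in the order 1 (the identity and nonnegativity of $D$), then 2, then to close the ``iff'' clause of part 1, and finally 3, since the H-theorem of part 1 is the engine that drives the stationarity characterization in part 2. For part 1 I would start from $H(p_t\tc\mu)=\sum_\si \mu(\si)f_t(\si)\log f_t(\si)$ and differentiate, using $\mu(\si)\dert f_t(\si)=\dert p_t(\si)$ together with the dynamics \eqref{sys1} and \eqref{sys02}, to get
$$
\dert H(p_t\tc\mu)=\sum_\t(\log f_t(\t)+1)\,\dert p_t(\t).
$$
The constant term $\sum_\t\dert p_t(\t)$ vanishes by the conservation law \eqref{cl1}, leaving
$$
\dert H=\sum_{\t,\t',\si,\si'}\mu(\t)\mu(\t')\cG(\t,\t';\si,\si')\big[f_t(\si)f_t(\si')-f_t(\t)f_t(\t')\big]\log f_t(\t).
$$
The key step is a fourfold symmetrization. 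First I symmetrize $\t$ with $\t'$: relabeling the dummy pair and using that the bracket is symmetric under $\t\leftrightarrow\t'$, together with \eqref{genesym}, replaces $\log f_t(\t)$ by $\tfrac12\log(f_t(\t)f_t(\t'))$. Then I swap the pairs $(\t,\t')\leftrightarrow(\si,\si')$ and invoke reversibility \eqref{reve1} to rewrite the coefficient; averaging the two resulting expressions collapses the summand to $-\tfrac14\,\mu(\t)\mu(\t')\cG(\t,\t';\si,\si')[f_t(\si)f_t(\si')-f_t(\t)f_t(\t')]\log\frac{f_t(\si)f_t(\si')}{f_t(\t)f_t(\t')}$, which is precisely $-D(f_t,f_t)$.

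For nonnegativity, each summand of $D(f,f)$ is of the form $(a-b)\log(a/b)\geq 0$ with $a=f(\si)f(\si')\geq 0$, $b=f(\t)f(\t')\geq 0$, multiplied by the nonnegative weight $\mu(\t)\mu(\t')\cG(\t,\t';\si,\si')$ for off-diagonal indices, while diagonal indices contribute zero because the bracket vanishes; hence $D(f,f)\geq 0$, with equality iff $f(\si)f(\si')=f(\t)f(\t')$ whenever $\cG(\t,\t';\si,\si')>0$. For part 2, the direction $(\Leftarrow)$ is immediate: if $f=\r/\mu$ satisfies this identity then every summand of $\Phi[\r](\t,\t')$ in \eqref{qp}–\eqref{sys02} vanishes, so $\Phi[\r]=0$. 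For $(\Rightarrow)$ I would run the dynamics from $p_0=\r$: stationarity gives $p_t\equiv\r$, hence $\dert H(p_t\tc\mu)=0$, so by part 1 one has $D(f,f)=0$, and nonnegativity forces the termwise identity on every pair with $\cG(\t,\t';\si,\si')>0$. This is the main obstacle: stationarity only asserts that certain \emph{sums} vanish, and it is precisely the H-theorem that upgrades this to the termwise (detailed-balance-type) identity. With part 2 in hand the vanishing characterization of $D$ closes the ``iff'' in part 1. The reversibility equivalence then follows by writing $f=\r/\mu$ in the $\r\otimes\r$-reversibility relation: the factors $\mu(\si)\mu(\si')\cG(\si,\si';\t,\t')=\mu(\t)\mu(\t')\cG(\t,\t';\si,\si')$ cancel by \eqref{reve1}, reducing that relation to $f(\si)f(\si')=f(\t)f(\t')$ on the support of $\cG$, i.e.\ to stationarity.

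For part 3 I would apply the identical symmetrization to $\dert\sum_\si p_t(\si)\log(\r(\si)/\mu(\si))=\sum_\t\log h(\t)\,\dert p_t(\t)$, where $h=\r/\mu$ is finite since $\r\in\cP_+(\cX)$. The same two steps produce
$$
\dert \sum_\si p_t(\si)\log\tfrac{\r(\si)}{\mu(\si)}=\tfrac14\sum_{\t,\t',\si,\si'}\mu(\t)\mu(\t')\cG(\t,\t';\si,\si')\big[f_t(\si)f_t(\si')-f_t(\t)f_t(\t')\big]\log\tfrac{h(\t)h(\t')}{h(\si)h(\si')}.
$$
Because $\r$ is stationary, part 2 gives $h(\t)h(\t')=h(\si)h(\si')$ whenever $\cG(\t,\t';\si,\si')>0$, so the logarithmic factor vanishes on exactly the indices where the weight is nonzero, while diagonal terms again vanish through the bracket; hence the whole expression is zero.

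One technical point pervades the symmetrizations and should be flagged: \eqref{genesym} is only assumed off the pairs $(\si,\si')=(\t,\t'),(\t',\t)$, but on precisely those pairs the bracket $f_t(\si)f_t(\si')-f_t(\t)f_t(\t')$ vanishes, so the exceptional terms never contribute and the relabeling of the coefficient is legitimate. Apart from this bookkeeping, the only genuinely nontrivial point is the $(\Rightarrow)$ direction of part 2, and I would emphasize that its proof is not self-contained within part 2 but rather borrows the entropy identity of part 1.
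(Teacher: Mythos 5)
Your proof is correct and follows essentially the same route as the paper's: differentiate the entropy, kill the constant term via the conservation law, symmetrize using \eqref{genesym} and reversibility \eqref{reve1} to obtain $-D(f_t,f_t)$, deduce the termwise characterization from nonnegativity, prove part 2's forward direction by running the dynamics from the stationary state, and get part 3 from $D(f_t,g)=0$. Your explicit handling of the exceptional pairs in \eqref{genesym} and of the reversibility equivalence are details the paper leaves implicit, but the argument is the same.
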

\begin{proof}
Differentiating and using \eqref{cl1} one has
 \begin{align}
\dert H(p_t\tc \mu)
&= \sum_{\t\in\cX} \left[\dert p_t(\t)\right] \log \frac{p_t(\t)}{\mu(\t)}\nonumber \\
\;\;\;\;& = \sum_{\si,\si',\t,\t'\in\cX}
\mu(\t)\mu(\t')\cG(\t,\t';\si,\si')\left[f_t(\si)f_t(\si') - f_t(\t)f_t(\t')
\right]\log{f_t(\t)}. \label{entprod0}
\end{align}
By the symmetry \eqref{genesym}, one can replace $\log{f_t(\t)}$ above with $\log(f_t(\t)f_t(\t'))$ at the cost of a factor $1/2$. Finally, by reversibility one obtains \eqref{entroprod1}.
Clearly, $D(f,f)\geq 0$ and $D(f,f)=0$ iff $f$ satisfies
$f(\si)f(\si') = f(\t)f(\t')$, 
whenever $\cG(\t,\t';\si,\si')>0$. From this and \eqref{sys02} it follows that $D(f_t,f_t)=0$ iff $p_t$ is stationary. This proves part 1. 

Part 2 follows in the same way since if $p$ is stationary then $p_t=p$ for all $t$. 

Finally, for part 3, reasoning as above, if $g(\si)=\r(\si)/\mu(\si)$, $\r\in\cP_+(\cX)$, then 
$$
\dert \sum_{\t\in\cX}p_t(\t)\log (g(\t)) = -D(f_t,g).
$$
Since $\r$ is stationary, one has that $D(f_t,g)=0$ by part 2. 
\end{proof}

Let us remark that by Proposition \ref{genstat}, for any initial condition $p\in \cP(\cX)$, one has that 
$H(p_t\tc\mu)$ is monotone non-increasing, and therefore has a limit. This alone does not imply that $p_t$ converges. In fact, 
at this level of generality it may be hard to give a complete characterization of the limit points of $p_t$ as $t\to\infty$, as the initial state $p$ varies in $\cP(\cX)$. However, 
by compactness one has that along some subsequence $t_n\to\infty$, $p_{t_n}\to\r$ for some  $\r\in\cP(\cX)$. As already observed in \cite[Theorem 2]{Sinetal}, if one knows that $\r$ is stationary and has full support, i.e., $\r\in\cP_+(\cX)$, then it follows by Proposition \ref{genstat} that $p_t\to \r$, $t\to\infty$. To see this, observe that
$$
H(p_t\tc \r) = H(p_t\tc \mu)-p_t[\log(\r/\mu)].
$$ 
From Proposition \ref{genstat} part 3, $p_t[\log(\r/\mu)]$ is independent of $t$ and therefore equals $H(\r\tc\mu)$. On the other hand one must also have that   $H(p_t\tc\mu)$ decreases to $H(\r\tc\mu)$, and therefore $H(p_t\tc \r)\to 0$, which implies $p_t\to\r$ by \eqref{pinsker}. 

For all the examples of RQS to be discussed below we shall not need to appeal to the above abstract argument. In fact, for any initial distribution $p\in\cP(\cX)$ we shall always be able to identify an explicit limit point $\mu\in\cP_+(\cX)$ such that 
$H(p_t\tc\mu)\to0$, $t\to\infty$. This in turn implies the convergence $p_t\to\mu$ by \eqref{pinsker}. To quantify the convergence of relative entropy we proceed as follows. 

\begin{definition}\label{defent}
Given the  RQS $(\cG,\mu)$ on $\cX$, we define $\cF=\cF(\cG,\mu)$ as the set of  functions
$f:\cX\mapsto[0,\infty)$ such that $\mu[f]=1$,
and \
\begin{align}\label{conslaw}
\mu[f\log(\r/\mu)] 
= \mu[\log(\r/\mu)], 
\end{align}
for all stationary $\r\in\cP_+(\cX)$.
Moreover, we say that the RQS satisfies the {\em entropy production estimate} with constant $\d>0$ if for all 
$f\in\cF$ one has 
 \begin{align}\label{entrobo}
D(f,f)\geq \d\,\ent(f),
\end{align}
where $\ent(f)=\mu[f\log f]-\mu[f]\log\mu[f]$. 
\end{definition}

\begin{proposition}\label{entroprop}
Suppose the RQS $(\cG,\mu)$ satisfies the entropy production bound with constant $\d>0$. Then any initial state $p\in\cP(\cX)$ such that 
$p/\mu\in\cF$ satisfies 
\begin{align}\label{expodeco}
H(p_t\tc\mu)\leq e^{-\d \,t}H(p\tc \mu)\,,
\end{align}
for all $t\geq 0$. 
\end{proposition}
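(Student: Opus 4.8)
The plan is to combine the two ingredients already assembled in the excerpt: the identity for the time derivative of relative entropy from Proposition~\ref{genstat}, and the hypothesized entropy production estimate from Definition~\ref{defent}. First I would verify that the hypothesis applies along the whole trajectory, i.e.\ that $f_t=p_t/\mu\in\cF$ for every $t\geq0$, not just at $t=0$. This is the conceptual heart of the argument: membership in $\cF$ is characterized by the conservation law $\mu[f_t\log(\r/\mu)]=\mu[\log(\r/\mu)]$ for all stationary $\r\in\cP_+(\cX)$, together with the normalization $\mu[f_t]=1$. The normalization is preserved by \eqref{cl1}, and the conservation law is preserved precisely because of part~3 of Proposition~\ref{genstat}, which states that $\dert\,\mu[f_t\log(\r/\mu)]=\dert\sum_\si p_t(\si)\log(\r(\si)/\mu(\si))=0$. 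Hence if $f=p/\mu\in\cF$ at $t=0$, then $f_t\in\cF$ for all $t\geq0$.

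Once that invariance is in hand, the rest is a standard Gr\"onwall-type argument. By Proposition~\ref{genstat} part~1 we have
\begin{equation}\label{mystep1}
\dert H(p_t\tc\mu)=-D(f_t,f_t).
\end{equation}
Since $f_t\in\cF$ for all $t$, the entropy production estimate \eqref{entrobo} gives $D(f_t,f_t)\geq \d\,\ent(f_t)$. The final point is to identify $\ent(f_t)$ with $H(p_t\tc\mu)$: because $\mu[f_t]=1$, the term $\mu[f_t]\log\mu[f_t]$ vanishes, so $\ent(f_t)=\mu[f_t\log f_t]=\sum_\si p_t(\si)\log(p_t(\si)/\mu(\si))=H(p_t\tc\mu)$. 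Substituting into \eqref{mystep1} yields the differential inequality
\begin{equation}\label{mystep2}
\dert H(p_t\tc\mu)\leq -\d\,H(p_t\tc\mu),\quad t\geq0.
\end{equation}

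Integrating \eqref{mystep2} gives the claimed bound $H(p_t\tc\mu)\leq e^{-\d t}H(p\tc\mu)$. Concretely, I would let $h(t)=H(p_t\tc\mu)$ and observe that $\dert(e^{\d t}h(t))=e^{\d t}(\dert h(t)+\d\,h(t))\leq0$, so $e^{\d t}h(t)$ is non-increasing and therefore bounded above by its value $h(0)=H(p\tc\mu)$ at $t=0$. I expect the only genuine obstacle to be the first step, namely confirming the invariance of the constraint set $\cF$ under the flow; the rest is routine differential-inequality bookkeeping. One should also note that $h(t)$ stays finite for $t>0$ (indeed $H(p\tc\mu)<\infty$ whenever $p/\mu\in\cF$ since $\mu$ has full support), so the manipulations are justified. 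No regularity issue arises beyond the differentiability of $t\mapsto p_t$, which follows from the existence and uniqueness theory for \eqref{sys1} cited in the setup.
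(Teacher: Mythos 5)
Your proof is correct and follows essentially the same route as the paper: both arguments reduce the claim to showing that $f_t=p_t/\mu$ remains in $\cF$ for all $t\geq 0$, which is exactly the conservation law of Proposition~\ref{genstat} part~3, and then combine part~1 with the entropy production bound \eqref{entrobo} and the identity $\ent(f_t)=H(p_t\tc\mu)$. The only difference is that you spell out the Gr\"onwall integration explicitly, which the paper leaves implicit.
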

\begin{proof}
For any $f\geq 0$ such that $\mu[f]=1$ one has $H(f\mu\tc \mu)=\ent(f)$. Thus, by Proposition \ref{genstat} part 1, it suffices to show that \eqref{entrobo} holds for all functions $f_t=p_t/\mu$, $t\geq 0$. 
Since by assumption $p/\mu\in\cF$ one has $p[\log(\r/\mu)]= \mu[\log(\r/\mu)]$, and by Proposition \ref{genstat} part 3,
the same holds for $p_t$, for all $t\geq 0$. In particular $f_t\in\cF$, for all $t\geq 0$. 
\end{proof}

In practical applications our approach may be summarized as follows. 
Given the initial $p\in\cP(\cX)$, the goal will be to find a   stationary measure $\mu\in\cP_+(\cX)$ such that $p[\log(\r/\mu)]= \mu[\log(\r/\mu)]$, for all stationary $\r\in\cP_+(\cX)$ and then to prove the entropy production bound of Definition \ref{defent} for the RQS with that choice of $\mu$. Notice that in this framework the rate of decay $\d$ may depend on the initial value $p\in\cP(\cX)$ but only through the selected equilibrium point $\mu\in\cP_+(\cX)$.   

%
%

The functional inequality \eqref{entrobo} can be interpreted as a nonlinear version of the logarithmic Sobolev inequality for Markov chains. In our setup, one can actually write it in the form of a classical logarithmic Sobolev  
inequality in the product space $\cX\times\cX$, as follows. 
For any $f,g:\cX\mapsto[0,\infty)$, write $F(\si,\si')=f(\si)f(\si')$ and 
$G(\si,\si')=g(\si)g(\si')$. Then, as in the proof of Proposition \ref{genstat} one has
 \begin{align}\label{covFG} 
D(f,g) = -\frac12\,\widehat\mu\,[(\cG F)\log G],
\end{align}
where we use the notation $\widehat\mu=\mu\otimes\mu$, and $\cG F$ denotes the usual linear action of $\cG$ on $F$.
In particular, \eqref{entrobo} now becomes
 \begin{align}\label{entro}
-\widehat\mu\,[(\cG F) \log F] \geq \d\,\ent_{\widehat\mu}(F),
\end{align}
where $\ent_{\widehat\mu}(F) = \widehat\mu\,[F\log F] - \widehat\mu\,[F]\log (\widehat\mu\,[F]) = 2\ent(f)$. 
The inequality \eqref{entro} is often referred to in the Markov chain literature as a ``modified log-Sobolev inequality"; see, e.g., \cite{PPP,BT}. An important difference to keep in mind here with respect to the usual Markov chain setup is that we do not assume irreducibility, and therefore \eqref{entro} in general cannot hold for all $F$. Indeed, in our setting we require this to hold only for $F$ of the form $F(\si,\si')=f(\si)f(\si')$, where $f\in\cF$.    
\subsection{Linearized problem and spectral gap}
As in kinetic theory, see, e.g., \cite{villani2002review}, to gain insight into the functional inequality \eqref{entrobo} it is natural to investigate the linearized problem for near-to-equilibrium densities, i.e.,  $f=1+\e\,\phi$ for some $\phi:\cX\mapsto\bbR$ such that $\mu[\phi]=0$ with small $\e> 0$. 
\begin{lemma}\label{linear}
Let $f=1+\e\,\phi$ for some $\phi:\cX\mapsto\bbR$ such that $\mu[\phi]=0$. Then, as $\e\to 0$ one has 
 \begin{align}\label{linear1}
\lim_{\e\to 0}\e^{-2} \ent(f) =\frac12 \mu[\phi^2]\,,\quad \lim_{\e\to 0}\e^{-2} D(f,f) = - \mu[(\G\phi) \phi],
\end{align}
where $\G$ is the linear operator defined by 
$$
\G(\t,\si) = \sum_{\si',\t'}\mu(\t')[\cG(\t,\t';\si,\si')+\cG(\t,\t';\si',\si)]\,.
$$
\end{lemma}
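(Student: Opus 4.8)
The plan is to expand both functionals to second order in $\e$ and match the quadratic coefficients; since $\cX$ is finite, all sums are finite and every remainder is automatically uniform in the state, so there is no analytic subtlety in taking the limits.

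First, the entropy term. Because $\mu[\phi]=0$ we have $\mu[f]=1$, hence $\ent(f)=\mu[f\log f]$. Substituting $f=1+\e\phi$ and using the elementary expansion $(1+x)\log(1+x)=x+\tfrac12 x^2+O(x^3)$ with $x=\e\phi$, I would obtain $f\log f=\e\phi+\tfrac12\e^2\phi^2+O(\e^3)$ pointwise, with the remainder uniformly $O(\e^3)$ on the finite set $\cX$. Taking $\mu$-expectation and using $\mu[\phi]=0$ to kill the linear term gives $\ent(f)=\tfrac12\e^2\mu[\phi^2]+O(\e^3)$, which yields the first limit.

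For the dissipation, write $F(\si,\si')=f(\si)f(\si')=1+\e\,\psi(\si,\si')+\e^2\phi(\si)\phi(\si')$, where $\psi(\si,\si'):=\phi(\si)+\phi(\si')$. Both factors occurring in $D(f,f)$ are then $O(\e)$ with the \emph{same} leading coefficient: one has $F(\si,\si')-F(\t,\t')=\e\,(\psi(\si,\si')-\psi(\t,\t'))+O(\e^2)$ and $\log\tfrac{F(\si,\si')}{F(\t,\t')}=\e\,(\psi(\si,\si')-\psi(\t,\t'))+O(\e^2)$. Multiplying and inserting into the definition of $D$, the leading contribution is $\e^2$ times
\begin{align*}
Q:=\frac14\sum_{\si,\si',\t,\t'}\mu(\t)\mu(\t')\,\cG(\t,\t';\si,\si')\,\bigl(\psi(\si,\si')-\psi(\t,\t')\bigr)^2,
\end{align*}
so that $\e^{-2}D(f,f)\to Q$. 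It then remains to identify $Q$ with $-\mu[(\G\phi)\phi]$. The key point is that by reversibility \eqref{reve1} the product measure $\widehat\mu=\mu\otimes\mu$ is reversible for $\cG$, regarded via \eqref{gene1} as a genuine Markov generator on $\cX\times\cX$. For any reversible generator the standard Dirichlet-form identity (using the row-sum condition $\sum_{(\si,\si')}\cG(\t,\t';\si,\si')=0$ together with reversibility to discard the diagonal squares, exactly as in the proof of Proposition \ref{genstat}) gives, with $\Psi(\si,\si')=\psi(\si,\si')$,
\begin{align*}
Q=\frac12\left(\frac12\sum_{\si,\si',\t,\t'}\mu(\t)\mu(\t')\,\cG(\t,\t';\si,\si')\,(\Psi(\si,\si')-\Psi(\t,\t'))^2\right)=-\frac12\,\widehat\mu[(\cG\Psi)\Psi].
\end{align*}

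Finally I would expand $\widehat\mu[(\cG\Psi)\Psi]=\sum_{\si,\si',\t,\t'}\mu(\t)\mu(\t')\cG(\t,\t';\si,\si')\,\psi(\si,\si')\,\psi(\t,\t')$ and split the factor $\psi(\t,\t')=\phi(\t)+\phi(\t')$ into two halves. Applying the symmetry \eqref{symme1}, i.e.\ the simultaneous relabelling $\t\leftrightarrow\t'$, $\si\leftrightarrow\si'$ under which $\cG$, $\mu(\t)\mu(\t')$ and $\psi(\si,\si')$ are all invariant, shows the two halves coincide, so $\widehat\mu[(\cG\Psi)\Psi]=2\mu[(\G\phi)\phi]$ with $\G$ precisely the operator in the statement (the two terms $\cG(\t,\t';\si,\si')$ and $\cG(\t,\t';\si',\si)$ in $\G(\t,\si)$ being the contributions of $\phi(\si)$ and $\phi(\si')$ inside $\psi$). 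Combining with the previous display gives $Q=-\mu[(\G\phi)\phi]$, the second limit. The only genuinely delicate step is this last symmetrization: one must track carefully how the product-space quadratic form collapses onto the single-site operator $\G$. Everything else is routine bookkeeping, and the uniform control of the $O(\e^3)$ remainders is immediate because all sums are finite.
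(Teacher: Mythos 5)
Your proof is correct and follows essentially the same route as the paper's: a second-order Taylor expansion of both functionals, with the quadratic term in $D(f,f)$ identified using the row-sum condition \eqref{gene1}, reversibility \eqref{reve1}, and the symmetry \eqref{symme1}. The only difference is organizational — you pass explicitly through the product-space Dirichlet form $-\tfrac12\,\widehat\mu[(\cG\Psi)\Psi]$ before collapsing to $-\mu[(\G\phi)\phi]$, whereas the paper writes down the cross term $\phi(\t)[\phi(\si)+\phi(\si')]$ directly; the underlying algebra is identical.
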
  
\begin{proof}
By expanding in the parameter $\e$ and retaining only terms up to order $\e^2$ it is not hard  to check that 
$\e^{-2} \ent(f) \to \frac12\mu[\phi^2]$, as $\e\to 0$. Similarly, neglecting terms of order $o(\e^2)$, $D(f,f)$ is given by
$$
-\,\e^2\sum_{\si,\si',\t,\t'}\mu(\t)\mu(\t')\cG(\t,\t';\si,\si')\phi(\t)[\phi(\si)+\phi(\si')].
$$
The latter expression equals $-\e^2\mu[(\G\phi) \phi]$.
\end{proof}
Thus, the linearized version of inequality \eqref{entrobo} states that 
\begin{align}\label{linearbo}
-\mu[(\G\phi ) \phi]\geq \frac\d{2} \mu[\phi^2]\,.
\end{align}
We note that 
if $\psi = \log(\r/\mu)$ for some stationary $\r\in\cP_+(\cX)$, then, as in the proof of Proposition \ref{genstat}, one 
finds $\G \psi = 0$.  This is not in contradiction with \eqref{linearbo}. Indeed, the condition $f=1+\e \phi\in\cF$ is equivalent to requiring that $\phi$ is orthogonal 
in $L^2(\mu)$ to the constant functions and to all conserved quantities $\psi=\log(\r/\mu)$ for stationary $\r\in\cP_+(\cX)$. 
Thus it is meaningful to study \eqref{linearbo} for all $\phi$ restricted to this class. This may be interpreted as a spectral gap bound, as follows. 

Suppose that $\cG=Q-\ind$, where $Q$ is a Markov kernel on $\cX\times \cX$. Then it is not hard to see that 
\begin{align}\label{gaq}
\G(\t,\si)=2 K(\t,\si) - \d(\si,\t) - \mu(\si),
\end{align}
where we introduce the Markov kernel 
\begin{align}\label{gaq1} K(\t,\si):=\tfrac12 \sum_{\si',\t'}\mu(\t')[Q(\t,\t';\si,\si')+Q(\t,\t';\si',\si)]\,.\end{align}
The kernel $K$ is reversible with respect to $\mu$. 
Moreover, if $\psi = \log(\r/\mu)$ for a stationary $\r\in\cP_+(\cX)$, then from $\G\psi=0$ and \eqref{gaq} we obtain the eigenvalue equation $$K\bar\psi = \tfrac12 \bar\psi,$$ 
where $\bar\psi:=\psi-\mu[\psi]$. Inequality \eqref{linearbo}
is then equivalent to 
\begin{align}\label{linearbo*}
\mu[(K\phi ) \phi]\leq \left(\tfrac12-\tfrac\d{4}\right) \mu[\phi^2]\,,
\end{align}
for all $\phi$ orthogonal 
in $L^2(\mu)$ to the constant functions and to the conserved quantities $\psi=\log(\r/\mu)$ as above.
From the obvious inequality $-\mu[(\G\phi )\phi]\geq 0$ one obtains for free that, apart from the trivial eigenvalue $1$, all eigenvalues of $K$ must be at most  $\tfrac12$. Thus, \eqref{linearbo*} says that, besides the eigenvalues associated to the conserved quantities, all other eigenvalues of $K$ are at most $\tfrac12-\tfrac\d{4}$. In some special cases one can fully diagonalize the operator $K$ and compute the optimal constant $\d$ in \eqref{linearbo*}.  It should be noted that, while the entropy production bound \eqref{entrobo} always implies the spectral gap \eqref{linearbo*} via Lemma \ref{linear}, the converse is of course not true.

\subsection{Continuous vs.\ discrete time}
If $\cG=Q-1$, where $Q$ is a Markov kernel on $\cX\times\cX$, then one can 
define the discrete time RQS as follows. Set 
\begin{align}\label{dtime}
\Psi[p](\t)=\sum_{\si,\si',\t'}p(\si)p(\si')Q(\si,\si',\t,\t').
\end{align}
This defines a map $\Psi:\cP(\cX)\mapsto\cP(\cX)$, whose $k$-th iterate $p^{(k)}=\Psi[p^{(k-1)}]$ describes the state of the system after $k$ steps, with initial state $p^{(0)}=p$. The entropy production estimate \eqref{entrobo} now takes the form
\begin{align}\label{expodecod}
H(p^{(k)}\tc\mu)\leq (1-\d)\,H(p^{(k-1)}\tc \mu)\,,
\end{align}
for all $k\in\bbN$. 
 Moreover, the same general remarks about stationary states and convergence to equilibrium apply in the discrete time setting; see also \cite{Sinetal}.
 
However, we caution the reader that, in contrast with the case of  linear evolution associated to a Markov chain, here there is a more pronounced difference 
between the discrete time evolution 
and the continuous time evolution. To briefly address this point, let us regard the operation $\Psi[p]$ as the product $p\odot p$, where we define, for all $p,q\in\cP(\cX)$ the new probability on $\cX\times\cX$  
$$
(p\odot q)(\t,\t')=\sum_{\si,\si'}p(\si)q(\si')Q(\si,\si',\t,\t').
$$
Using 
the symmetry \eqref{genesym} one sees that the product $p\odot q$ is commutative. However, it is not in general associative. This is the main source of difficulty in the explicit construction of the continuous time evolution $p_t$, in contrast with the simple iterations $p^{(k)}$ of the discrete time process.  
The form $p\odot q$ is the analogue of the Wild convolution product in the Boltzmann equation literature. A solution of the continuous time system can be constructed using suitable sums over so-called ``McKean trees", which encode the various ways of taking products, such as $p\odot(p\odot(p\odot p))$ or 
$(p\odot p)\odot(p\odot p)$ and so on. This construction builds on the pioneering work of Wild and McKean; see \cite{CCG} and references therein. 
Our results below will not make use of this method; they will be based only on functional inequalities of the form \eqref{entrobo} or \eqref{expodecod}, and will not make any significant distinction between discrete and continuous time evolution.  

%

\section{Main examples}\label{examples}
We now turn to concrete examples of RQS. 

\subsection{Binary uniform crossover}
We begin with the simplest possible example.
Let $\cX=\{0,1\}^n$ for some fixed integer $n$, and 
\begin{equation}\label{qbits}
Q(\si,\si;\t,\t')=\frac1{2^n}\sum_{A\subset[n]}\ind(\t=\si'_A\si_{A^c},\t'=\si_A\si'_{A^c}).
\end{equation}
In words, we move from  $(\si,\si')$ to $(\t,\t')$ under a uniform crossover, i.e., $(\t,\t')$ is obtained from $(\si,\si')$ by picking a uniformly random $A\subset[n]$ and swapping the $A$-components $\si_A$ and $\si'_A$, leaving the rest unchanged.  
Then $\cG=Q-\ind$ defines the generator of the RQS.
For the reference measure $\mu$ we may choose any product of Bernoulli probability measures. 
Indeed, 
\eqref{reve1} holds if $\mu=\otimes_{i=1}\mu_i$, with $\mu_i$ arbitrary Bernoulli distributions. Now, given any initial state $p\in\cP(\cX)$, equation \eqref{sys1}
can be written as 
 \begin{equation}\label{pct1}
\dert \,p_t=
 \frac1{2^n}\sum_{A}  \,(p_{t,A}\otimes p_{t,A^c} - p_t)\,,\quad t\geq 0,
\end{equation}
with the initial condition $p_0=p$.  This is the model described in \eqref{pct} in the special case where 
the single site spaces $X_i$ all coincide with $\{0,1\}$ and 
the distribution $\nu$ is uniform. The marginals $p_i$ of $p$ are preserved by the evolution, i.e., $p_{t,i}=p_i$ for all $i,t$, and thus the natural candidate for convergence of $p_t$ as $t\to\infty$ is the product measure $\pi=  \otimes_{i=1}p_i$; see also Lemma \ref{simple} below. Our results imply that the entropy production bound \eqref{entrobo} holds with $\d\geq 1/n$, independent of the initial state $p\in\cP(\cX)$. In particular, 
 \begin{equation}\label{re1}
H(p_t\tc\pi)\leq H(p\tc\pi)\,e^{-t/n}
\end{equation}
for any initial distribution $p\in\cP(\cX)$ and any $n$. As we shall see in the proof of Corollary \ref{corodio} the $1/n$ behavior of the constant cannot be improved if we require uniformity of the decay rate as a function of the initial state $p$.  In this example the linear operator $K$ from \eqref{gaq1} can be fully diagonalized, and one finds that \eqref{linearbo*} holds with $\d=1$.  In particular, this is an example of a system with uniformly positive spectral gap but with a vanishing rate (as $n\to\infty$) in the exponential decay of  relative entropy.

\subsection{General recombination model}
The binary uniform crossover model can be readily generalized to the case where 
the uniform choice of $A$ in \eqref{qbits} is replaced by a given distribution $\nu$ on subsets of $[n]$, and the state space is taken as $\cX=X_1\times\cdots\times X_n$ with arbitrary finite single site spaces $X_i$. 
Again $\cG = Q-\ind$, where now
\begin{equation*}
Q(\si,\si;\t,\t')=\sum_{A\subset[n]}\nu(A)\ind(\t=\si'_A\si_{A^c},\t'=\si_A\si'_{A^c}).
\end{equation*}
The system equation \eqref{sys1} then coincides with  \eqref{pct}. 
As above it is not hard to check that \eqref{reve1} is satisfied by any product measure $\mu=\otimes \mu_i$, with $\mu_i$ an arbitrary probability measure on $X_i$. To define the RQS $(\cG,\mu)$, we fix one such $\mu$, with $\mu\in\cP_+(\cX)$. 
Say that $\nu$ is {\it nondegenerate\/} if for any $i,j\in[n]$ there is a positive probability that the random set with distribution $\nu$ separates $i$ and $j$.  
\begin{lemma}\label{simple}
If $\nu$ is nondegenerate, then 
 $\r\in\cP(\cX)$ is stationary iff $\r$ has the product form $\r=\otimes_{i=1}^n\r_i$ for some probability measures $\r_i$ on $X_i$. 
\end{lemma}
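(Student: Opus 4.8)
The plan is to reduce stationarity to the pointwise ``detailed balance'' identity of Proposition~\ref{genstat}, to read that identity one cut at a time as a nonnegative rank-one factorization, and then to use nondegeneracy to kill all higher-order interactions. One implication needs no hypothesis on $\nu$: if $\r=\otimes_i\r_i$ then its marginals satisfy $\r_A=\otimes_{i\in A}\r_i$ and $\r_{A^c}=\otimes_{i\in A^c}\r_i$, so $\r_A\otimes\r_{A^c}=\r$ for every $A$; comparing with \eqref{pct} gives $\sum_A\nu(A)(\r_A\otimes\r_{A^c}-\r)=0$, i.e.\ $\r$ is stationary.

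For the converse I would put $f=\r/\mu$. By Proposition~\ref{genstat}, part~2, $\r$ is stationary iff $f(\si)f(\si')=f(\t)f(\t')$ whenever $\cG(\t,\t';\si,\si')>0$, which for the recombination generator $\cG=Q-\ind$ means exactly
\[
 f(\si)f(\si')=f(\si'_A\si_{A^c})\,f(\si_A\si'_{A^c}),\qquad \si,\si'\in\cX,\ \nu(A)>0.
\]
Fix such an $A$ and write a configuration as a pair $(\si_A,\si_{A^c})\in X_A\times X_{A^c}$. The identity says precisely that the nonnegative matrix $M(x,y):=f(xy)$ (rows indexed by $X_A$, columns by $X_{A^c}$) has all $2\times2$ minors equal to $0$; hence $\mathrm{rank}\,M\leq1$ and $f(\si)=\phi_A(\si_A)\,\psi_A(\si_{A^c})$ for suitable nonnegative $\phi_A,\psi_A$. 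This is the key point: reading each cut-condition as a rank-one factorization makes the possible zeros of $f$ harmless.

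Two conclusions then follow. First, $\mathrm{supp}(f)=\{\phi_A>0\}\times\{\psi_A>0\}$ is a box across the cut $A$, so $D:=\mathrm{supp}(\r)$ is closed under $A$-gluing, $\si,\si'\in D\Rightarrow\si_A\si'_{A^c}\in D$, for every $A$ with $\nu(A)>0$. Composing the gluing maps along $A$ and along $B$ realizes the gluing map along $B\setminus A$, so this closure is inherited by the whole Boolean algebra generated by $\{A:\nu(A)>0\}$; since nondegeneracy makes this family separate every pair $i\neq j$, the algebra contains all singletons, and closure under each $\{i\}$-gluing forces $D=\prod_i D_i$ to be a genuine product set. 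Second, on $D$ we have $f>0$, so $u:=\log f$ is well defined there and the cut-factorizations read $u=\alpha_A(\si_A)+\beta_A(\si_{A^c})$. In the interaction (ANOVA) decomposition $u=\sum_S U_S$ relative to the product measure, such an additive split forces $U_S=0$ whenever $S$ meets both $A$ and $A^c$; as the cuts separate all pairs, every $S$ with $|S|\geq2$ crosses some cut, whence $U_S=0$ for all $|S|\geq2$. Thus $u(\si)=c+\sum_i v_i(\si_i)$, i.e.\ $f=\prod_i g_i(\si_i)$ on $D$ and $f\equiv0$ off the product set $D$; equivalently $\r=\otimes_i\r_i$.

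I expect the delicate step to be the passage from the single-cut factorizations to a global product in the presence of zeros. Pairwise conditions are by themselves too weak: a measure supported on $\{0\cdots0,\,1\cdots1\}$ satisfies every pair-conditional factorization yet is not a product. The real work therefore lies in (i) upgrading the cut-boxes to one genuine product support via the Boolean-algebra/gluing argument and (ii) eliminating all interactions of order $\geq2$ on that support; it is precisely nondegeneracy, forcing each $S$ with $|S|\geq2$ to cross an available cut, that closes both gaps.
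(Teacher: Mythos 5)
Your proof is correct, but it takes a genuinely different route from the paper's. The paper disposes of the zeros-of-$\r$ issue in one line: starting from the same pointwise identity of Proposition~\ref{genstat}, part 2, it sums over $\si'$ to obtain the measure-level factorization $\r(\si)=\r_A(\si_A)\r_{A^c}(\si_{A^c})$, i.e.\ \eqref{marges}, for every cut with $\nu(A)>0$ --- no division by $\mu$, no supports --- and then runs a short induction on the size of subsets: any $B'$ with $|B'|=k+1$ contains a pair $i,j$, nondegeneracy provides a cut $A$ separating them, and marginalizing \eqref{marges} over $B'$ gives $\r_{B'}=\r_{A\cap B'}\otimes\r_{A^c\cap B'}$ with both factors of size at most $k$, so the inductive hypothesis applies. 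Your per-cut rank-one factorization of $M(x,y)=f(xy)$ is in substance equivalent to \eqref{marges} (multiply back by the product measure $\mu$ and normalize), but because you insist on working with the density $f$ pointwise, you must then do the work the paper never faces: the gluing/Boolean-algebra argument identifying $\mathrm{supp}(\r)$ as a product set, and the interaction (ANOVA) decomposition killing all terms of order at least $2$. Both steps check out: the composition identity (gluing along $A$ applied to $\si'$ and the $B$-gluing of $(\si,\si')$ realizes the $(B\setminus A)$-gluing) is valid, so the family of cuts under which the support is closed is indeed a Boolean algebra containing all singletons; and uniqueness of the interaction decomposition on the product support, with respect to a product reference measure carried by it, does force $U_S=0$ once every $S$ with $|S|\geq 2$ crosses some $\nu$-positive cut. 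What your route buys is extra structure --- an explicit product description of the support and the log-linear form of the density on it; what the paper's route buys is brevity and robustness, since the single summation over $\si'$ replaces both of your globalization steps.
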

\begin{proof}
From Proposition \ref{genstat} it follows that $ \r\in\cP(\cX)$ is stationary iff it satisfies
$$
\r(\si)\r(\si') = \r(\si'_A\si_{A^c})\r(\si_A\si'_{A^c})\,,
$$
for all $\si,\si'\in\cX$ and all $A\subset[n]$ such that $\nu(A)>0$. 
In particular, any product measure $\r=\otimes_{i=1}^n\r_i$ is stationary. To prove the converse, 
notice that summing over $\si'\in\cX$ in the above equation one has that 
\begin{equation}\label{marges}
\r(\si)=\r_A(\si_A)\r_{A^c}(\si_{A^c}),
\end{equation}
for all $\si\in\cX$ and all $A\subset[n]$ such that $\nu(A)>0$, where $\r_A$ denotes the marginal of $\r$ on $A$. 
We prove by induction that for any set $B\subset[n]$ one has $\r_B(\si_B)=\prod_{i\in B}\r_i(\si_i)$. 
Plainly, this is true for all sets $B\subset [n]$ with $|B|=1$. Suppose that it is true for all sets $B\subset [n]$ with $1\leq |B|\leq k$.  Take $B'\subset[n]$ with $|B'|=k+1$, and choose $i,j\in B'$. 
By the nondegeneracy assumption, there is a set $A$ such that $i\in A$, $j\in A^c$, and $\nu(A)>0$. Applying \eqref{marges} with this choice of $A$ and taking the marginal over $B'$, one finds $\r_{B'}(\si_{B'})=\r_{A\cap B'}(\si_{A\cap B'})\r_{A^c\cap B'}(\si_{A^c\cap B'})$. Since $1\leq |A^c\cap B'|\leq k$ and $1\leq |A\cap B'|\leq k$ we may apply the inductive assumption to conclude.
%
%
\end{proof}
Clearly, all models discussed in the introduction satisfy the nondegeneracy assumption. Since the marginals $p_i$ of $p$ are preserved by the evolution, the natural candidate for convergence of $p_t$ is the product measure $\pi =  \otimes_{i=1}p_i$. As highlighted in Corollary \ref{corodio}, our analysis will show that, for any initial state $p\in\cP(\cX)$, 
 \begin{equation}\label{reo1}
H(p_t\tc\pi)\leq H(p\tc\pi)\,e^{-\k(\nu) t}
\end{equation}
for all $t\geq 0$, where the constant $\k(\nu)$ is as specified in Theorem \ref{main2}.  

We remark that we may always pretend that $\pi$ has full support, and take $\pi$ itself as the reference measure $\mu$. Indeed, this is equivalent to the condition that $p_i$ has full support on $X_i$ for all $i\in[n]$, and if that is not the case then we may simply replace the $X_i$ with subspaces $\bar X_i$ such that $p_i$ has full support on $\bar X_i$ and work with the RQS $(\cG,\pi)$ within the space $\bar\cX= \bar X_1\times\cdots\times \bar X_n$ instead of $\cX$; this ensures that now $ \pi\in\cP_+(\bar\cX)$.  


In the remainder of this section we briefly consider some more general examples of RQS that go
beyond our recombination examples.  In particular, these generalized models will admit stationary 
measures that are not product measures.  While our quantitative results on entropy production
do not so far extend to these cases, they serve as examples of natural open questions in this area.

\subsection{Nonlinear stochastic Ising model}
As a canonical example of a generalization that admits
nontrivial correlations in the equilibrium state, we introduce a natural nonlinear dynamics on 
the Ising model on a finite graph $G=(V,E)$, $V=[n]$. The Ising model is the probability measure 
$\mu=\mu_{G,\b,{\bf h}}$ on $\cX=\{-1,1\}^V$, given by
\begin{equation}\label{is1}
\mu(\si)=\frac1{Z}\,e^{-\b H(\si)+\sum_{i\in V}h_i\si_i}\,,\qquad H(\si)=-\sum_{ij\in E}\si_i\si_j .
\end{equation}
Here ${\bf h}=\{h_i\}_{i\in V}\in[-\infty,\infty]^V$ are the so-called ``external fields", $\b\in\bbR$ is a parameter (the ``inverse temperature"), and $Z=Z_{G,\b,{\bf h}}$ is the normalizing factor, or ``partition function." Infinite values of external fields encode so-called ``boundary conditions": if $h_i=+\infty$ (respectively, $-\infty$) then one has a $+$ (respectively, \ $-$) boundary condition at site $i$. Clearly, $\mu\in\cP_+(\cX)$ iff all external fields are finite. Below, we fix one such $\mu$ as reference measure.  We set $\cG=Q-\ind$, with the Markov kernel $Q$ defined by $Q=\sum_A\nu(A) Q_A$, where $\nu$ is a given distribution over subsets and for any $A\subset [n]$, 
  \begin{gather}
Q_A(\si,\si';\t,\t') = \a_A(\si,\si')\ind(\t=\si'_A\si_{A^c},\t'=\si_A\si'_{A^c})+ (1-\a_A(\si,\si'))
\ind(\t=\si,\t'=\si'),\nonumber\\
\a_A(\si,\si'):=\frac{\mu(\si'_A\si_{A^c})\mu(\si_A\si'_{A^c})}{\mu(\si'_A\si_{A^c})\mu(\si_A\si'_{A^c})+\mu(\si)\mu(\si')}\label{genis}
\,.
\end{gather}
Notice that $\a_A(\si,\si')$ can be written as the conditional $\mu\otimes \mu$ probability of the pair $(\si'_A\si_{A^c},\si_A\si'_{A^c})$ given the occurrence of either $(\si'_A\si_{A^c},\si_A\si'_{A^c})$ or $(\si,\si')$. It satisfies
$$
\a_A(\si,\si') = \frac1{1+e^{\b\phi_A(\si,\si')}}\,,\qquad \phi_A(\si,\si') = \sum_{i\in A,j\in A^c}(\si_i-\si'_i)(\si_j-\si'_j)\ind(ij\in E)\,.
$$
In particular, $\a_A$ is independent of the external fields ${\bf h}$. It is easily checked that for each $A$, one has the reversibility
\begin{equation}
\label{revmu}
\mu(\t)\mu(\t')Q_A(\t,\t';\si,\si')= \mu(\si)\mu(\si')Q_A(\si,\si';\t,\t'),
\end{equation}
for all $\si,\si',\t,\t'\in\cX$. Therefore $(\cG,\mu)$ defines a RQS. Since the kernel $Q$ is independent of the external fields ${\bf h}$, the RQS here is determined by the distribution $\nu$, the parameter $\b$ and the graph $G$ (and not by ${\bf h}$). 
Moreover, any Ising measure of the form \eqref{is1} satisfies \eqref{revmu} and therefore it is stationary for the RQS. 
A particularly interesting choice is the single site update $\nu(A)=\tfrac1n\ind(|A|=1)$, which can be interpreted as a nonlinear version of the usual Ising Gibbs sampler, or Glauber dynamics; see, e.g., \cite{LPW} for an introduction.  
One can prove the following characterization of the stationary distributions. 
\begin{lemma}\label{stationa}
Fix a graph $G$ with $n$ vertices, and $\b\in\bbR$. Assume $\nu(A)=\frac1n\ind(|A|=1)$.  Let $\mu\in\cP_+(\cX)$ be  as in \eqref{is1} with arbitrary external fields. A distribution $\r\in\cP(\cX)$ is stationary for the RQS $(\cG,\mu)$ if and only if $\r$ is of the form \eqref{is1} for some choice of ${\bf h}$.
\end{lemma}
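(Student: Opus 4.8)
The plan is to reduce the statement to the functional characterization of stationary states in Proposition~\ref{genstat}, part~2, and then to analyze the resulting multiplicative equation through the structure of its support. Since $\nu$ is supported on singletons, $\cG=Q-\ind$ with $Q=\tfrac1n\sum_i Q_{\{i\}}$, and because $\mu\in\cP_+(\cX)$ the coefficient $\a_{\{i\}}(\si,\si')$ in \eqref{genis} always lies strictly in $(0,1)$. Hence the only off-diagonal transitions of positive rate are the single-site swaps $(\si,\si')\mapsto(\si'_{\{i\}}\si_{\{i\}^c},\,\si_{\{i\}}\si'_{\{i\}^c})$, and Proposition~\ref{genstat}, part~2, tells us that $\r$ is stationary if and only if $f:=\r/\mu$ satisfies
\begin{equation}\label{stateq}
f(\si)\,f(\si') = f(\si'_{\{i\}}\si_{\{i\}^c})\,f(\si_{\{i\}}\si'_{\{i\}^c})
\end{equation}
for all $\si,\si'\in\cX$ and all $i\in[n]$. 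The ``if'' direction is then immediate: when $\r$ has the form \eqref{is1} with finite fields, $f$ is log-linear, $f(\si)=C\exp(\sum_i b_i\si_i)$, and since a single-site swap preserves $\si_i+\si'_i$ coordinatewise, \eqref{stateq} holds; this is exactly the reversibility \eqref{revmu} already noted. The case of infinite fields (boundary conditions) follows either by a limiting argument or by checking \eqref{stateq} directly on the reduced support.

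For the ``only if'' direction, let $\r$ be stationary and set $S=\{\si:\r(\si)>0\}$. First I would prove that $S$ is a product set. If $\si,\si'\in S$, the right-hand side of \eqref{stateq} is positive, so both swapped configurations again lie in $S$; iterating swaps at distinct sites yields, for every $A\subset[n]$, the full recombination $\si'_A\si_{A^c}\in S$. A coordinate-by-coordinate assembly then gives $S=\prod_i S_i$ with $S_i=\{\si_i:\si\in S\}\subseteq\{-1,1\}$. Let $B=\{i:|S_i|=1\}$ be the set of \emph{frozen} sites, with $\si_i\equiv s_i$ on $S$ for $i\in B$.

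Next I would solve \eqref{stateq} on the box $S$. Taking logarithms, $g:=\log f$ satisfies $g(\si)+g(\si')=g(\si'_{\{i\}}\si_{\{i\}^c})+g(\si_{\{i\}}\si'_{\{i\}^c})$ for swap-related pairs in $S$. Fixing a free site $i$ and letting $\delta_i(\eta)$ denote the difference of $g$ between the two allowed values of the $i$-th spin with the remaining coordinates fixed to $\eta$, equation \eqref{stateq} states that $\delta_i$ does not depend on $\eta\in\prod_{j\neq i}S_j$; call its value $2c_i$. Then $g(\si)-\sum_{i\notin B}c_i\si_i$ has vanishing increment in every free coordinate, hence is constant on $S$, so $g(\si)=a+\sum_{i\notin B}c_i\si_i$ there. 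Consequently $\r=f\mu$ is, on $S$, proportional to $\exp(-\b H(\si)+\sum_i h_i\si_i+\sum_{i\notin B}c_i\si_i)$, which is exactly the Ising form \eqref{is1} with fields $h_i'=h_i+c_i$ at the free sites and $h_i'=\pm\infty$ (matching $s_i$) at the frozen sites.

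I expect the support analysis to be the main obstacle. One must argue carefully that $S$ is a genuine product set and then verify that freezing $\si_B\equiv s_B$ through infinite external fields in \eqref{is1} reproduces exactly the conditional law $\r$ on $S$; this requires tracking how the full energy $H$---in particular the edges joining frozen to free sites---is absorbed into the effective fields $h_i'$ at the free sites. Once one is restricted to a product set, the discrete-gradient solution of \eqref{stateq} is routine.
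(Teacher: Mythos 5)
Your proposal is correct, and it rests on the same foundation as the paper's proof: Proposition~\ref{genstat}, part~2, reduces stationarity to the multiplicative equation \eqref{revmu1} along single-site swaps, and the task is then to solve that equation while controlling the support of $\r$. Where you differ is in the organization and in two technical devices. The paper first treats $\r\in\cP_+(\cX)$, solving the equation by telescoping $f$ along the path $\si[0],\si[1],\dots,\si[n]$ to the all-plus configuration and using \eqref{revmu1} with $\si'=+$ to identify each ratio as a single-spin function $\varphi_j(\si_j)$; it then handles degenerate supports by showing that a single zero ($\r(\si)>0$, $\r(\si^i)=0$) forces $\r$ to vanish on the whole hyperplane $\{\t_i=-\si_i\}$, and recurses to exhibit the support as a sub-cube $\{\t_A=v_A\}$, after which the full-support argument is repeated on the restricted space. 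You instead establish the support structure first, via the neat observation that positivity of the left side of the functional equation forces positivity of both swapped configurations, whence the support is closed under all recombinations and is therefore a product set $\prod_i S_i$; you then solve the equation once, uniformly, by noting that the cross-difference $\delta_i(\eta)$ is independent of the environment, so $\log f$ is affine in the free spins. Your route buys a single uniform solution step in place of the paper's case split and recursion; the paper's route is more constructive about the frozen set. One remark: the obstacle you flag at the end—absorbing the frozen-to-free edge energies into effective fields—does not actually arise, because $\r=f\mu$ and the target Ising measure with infinite fields at the frozen sites carry the \emph{same} factor $e^{-\b H(\si)}$ on the support, so the identification of the two measures is immediate once $\log f$ is affine there.
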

\begin{proof}
We have seen that any $\r$ of the form \eqref{is1} satisfies \eqref{revmu} and it is therefore stationary. 
To prove the converse, observe that by Proposition \ref{genstat} part 2, one has that any stationary $\r$ must satisfy 
 \begin{equation}
\label{revmu1}
\frac{\r(\si)\r(\si')}{\mu(\si)\mu(\si')}= \frac{\r(\si'_{\{i\}}\si_{[n]\setminus\{i\}})\r(\si_{\{i\}}\si'_{[n]\setminus\{i\}})}{\mu(\si'_{\{i\}}\si_{[n]\setminus\{i\}})\mu(\si_{\{i\}}\si'_{[n]\setminus\{i\}})},
\end{equation}
for all sites $i\in[n]$, and for all $\si,\si'\in\cX$. 
Suppose first that $\r\in\cP_+(\cX)$.  We will show that,
if $f(\si)=\r(\si)/\mu(\si)$, then there exist
$\varphi_1,\dots,\varphi_n$ with $\varphi_j:\{-1,+1\}\mapsto(0,\infty)$ such that 
  \begin{equation}
\label{revmu3}
f(\si) = f(+)\prod_{j=1}^n\varphi_j(\si_j)\,,
\end{equation}
where $+$ denotes the configuration with all spins equal to $ +1$. To prove \eqref{revmu3}, for any $\si\in\cX$, $j\in[n]$, let $\si[j]\in\cX$ denote the configuration equal to $+1$ for all sites in $[j]=\{1,\dots,j\}$ and equal to $\si$ on $\{j+1,\dots,n\}$. Since $\r,\mu\in\cP_+(\cX)$, if $\si[0]=\si$, one has 
\[
f(\si)=f(+)\prod_{j=1}^n \frac{f(\si[j-1])}{f(\si[j])}.
\]
Taking $i=j$, $\si = \si[j-1]$ and $\si'=+$ in \eqref{revmu1}, and setting $ \varphi_j(\si_j) =\frac{ f(+^{j,\si_j})}{f(+)}$, where $+^{j,\si_j}$ denotes the configuration equal to $\si_j$ at $j$ and equal to $+1$ elsewhere, one has 
\[
\frac{f(\si[j-1])}{f(\si[j])} = \varphi_j(\si_j) \,,\quad j=1,\dots,n.
\]
This proves \eqref{revmu3}. Since $\varphi_j$ is a positive function of a single spin, it  can be written as $\varphi_j(\si_j)=e^{h_j\si_j + c_j}$ for some real numbers $h_j,c_j$. Therefore, from \eqref{revmu3} we obtain 
$\r(\si)={\rm const}\times \mu(\si)e^{\sum_{j\in V}\si_j h_j}$. 
This ends the proof for $\r\in\cP_+(\cX)$. 

If $\r$ does not have full support, then there must exist $A\subset[n]$ and a vector $v_A=(v_i)_{i\in A}\in\{-1,+1\}^{A}$ such that for all $\t\in\cX$, 
 \begin{equation}
\label{revmu5}
\r(\t)=\ind(\t_A=v_A)\r'(\t_{A^c}),
\end{equation}
for some probability $\r'$ on $\{-1,+1\}^{A^c}$ with full support.  (Here~$A$ is the set of vertices
with boundary conditions in~$\rho$.)
This can be seen as follows. It is not hard to check that if $\r\in\cP(\cX)\setminus\cP_+(\cX)$, then there exists  $\si\in\cX$, $i\in[n]$ such that $\r(\si)>0$ and $\r(\si^i)=0$. Then, taking $\si'$ equal to $-\si_i$ at $i$ and arbitrary otherwise, from \eqref{revmu1} one sees that $\r(\t)=0$ for all $\t\in\cX$ such that $\t_i=-\si_i$. Now, if $\r(\t')>0$ for all $\t'\in\cX$ such that $\t'_i=\si_i$, then \eqref{revmu5}
holds with $A=\{i\}$ and $v_i=\si_i$. Otherwise, restricting attention to configurations with the $i$-th spin equal to $\si_i$, one can repeat the above reasoning, and the desired conclusion follows by recursion.   

Once \eqref{revmu5} is established, one can repeat the argument given in the case of $\r\in\cP_+(\cX)$, by restricting to the subspace of $\si\in\cX$ such that $\si_A=v_A$. On this set the measure $\r$ has full support and  one obtains  again \eqref{revmu3}, this time with $f(+)$ replaced by $f(\xi)$ where $\xi$ equals $v_A$ on $A$ and $+$ on $A^c$, with $\si[j]$ interpolating from $\si$ to $\xi$, and with the product restricted  to $j\in A^c$.  
It follows that $\r$ has the form  \eqref{is1}, with $h_i\in\bbR$ for $i\in A^c$, $h_i=+\infty$ for $i\in A$ such that $v_i=+1$, and $h_i=-\infty$ for $i\in A$ such that $v_i=-1$.
\end{proof}

We remark that Lemma 3.2 assumes that $\mu\in\cP_+(\cX)$, i.e.\ that the corresponding external fields are all finite.
However, in order to analyze the Ising model with boundary conditions one may wish to take some of the external fields to have positive or negative infinite values. In this case one can define the RQS $(\cG,\mu)$ as above with the difference that now the state space $\cX$ is replaced by the restricted space of $\si\in\cX$ that are aligned with the boundary conditions at those sites which have infinite external field, so that $\mu$ has full support when restricted to this set. 
Then it is not hard to check that  Lemma 3.2 continues to hold, with the same proof.

Since marginals are conserved by the evolution, for any $p\in\cP(\cX)$ we expect that $p_t\to \mu$, where $\mu=\mu(p)$ is the equilibrium measure \eqref{is1} with external fields ${\bf h}$ such that the marginals $\mu_i$ coincide with the marginals $p_i$. It is standard to check that such a choice always exists. Equivalently, the measure $\mu=\mu(p)$ is characterized by the condition $$\mu[\log(\r/\mu)]=p[\log(\r/\mu)],$$ for all stationary $\r\in\cP_+(\cX)$, since by Lemma \ref{stationa} one has that $\log(\r/\mu)=\sum_{i}a_i \si_i$ for arbitrary coefficients $a_i$. Recall that this is the condition appearing in Definition \ref{defent}. 

In analogy with results on entropy decay for the Glauber dynamics (see, e.g., \cite{CapMenzTet} and references therein), we propose the following conjecture concerning convergence to equilibrium.
\begin{conjecture}\label{isingconj}
Assume $\nu(A)=\frac1n\ind(|A|=1)$.
There exists a constant $c>0$ such that for any graph $G$ with $n$ vertices and maximum degree $\D$, for any  $\b\in\bbR$ with $|\b|\leq c/\D$, and for any choice of external fields ${\bf h}$, the RQS $(\cG,\mu)$ with $\mu= \mu_{G,\b,{\bf h}}$ satisfies the entropy production estimate of Definition \ref{defent} with constant $\d=c/n$. 
\end{conjecture}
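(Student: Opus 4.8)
The plan is to follow the template used for the product-measure models in Theorem \ref{main2}, replacing the Shearer-type subadditivity \eqref{olent5} by its high-temperature Ising analogue, namely \emph{approximate tensorization of entropy}, $\ent(f)\leq C\sum_i\mu[\ent_i(f)]$, where $\ent_i(f)$ is the entropy of $f$ with respect to the single-site conditional measure $\mu(\,\cdot\mid\si_{[n]\setminus\{i\}})$. Recast the target as the modified log-Sobolev inequality \eqref{entro}: for $F=f\otimes f$ with $f\in\cF$ one must show $-\widehat\mu[(\cG F)\log F]\geq\d\,\ent_{\widehat\mu}(F)$, where the left side is $D(f,f)$. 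For the single-site model $\nu(A)=\frac1n\ind(|A|=1)$, reversibility \eqref{revmu} lets us write $D(f,f)=\frac1n\sum_i D_i(f,f)$, where $D_i$ is supported on pairs $(\si,\si')$ with $\si_i\neq\si'_i$ and, with weight $\mu(\si)\mu(\si')\a_{\{i\}}(\si,\si')$, compares $f(\si)f(\si')$ with $f(\si^i)f(\si'^i)$ ($\si^i$ the flip of $\si$ at $i$). I would then assemble two ingredients: (i) the approximate tensorization above, with $C=O(1)$; and (ii) a lower bound $D(f,f)\geq\frac cn\sum_i\mu[\ent_i(f)]$ for $f\in\cF$. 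Granting (i) and (ii), $D(f,f)\geq\frac cn\sum_i\mu[\ent_i(f)]\geq\frac{c}{Cn}\,\ent(f)$, which is the asserted bound with $\d=c/(Cn)$.

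Ingredient (i) is the standard input from high-temperature equilibrium theory: under a Dobrushin-type uniqueness condition, guaranteed by $|\b|\D\leq c$, the Ising measure $\mu$ satisfies approximate tensorization of entropy with $C=O(1)$, uniformly in $n$ and in the (finite) external fields; combined with the $1/n$ normalization in $\nu$ this produces the target scaling $\d=c/n$. It is worth first checking the \emph{linearized} statement as a sanity test: by Lemma \ref{linear} the bound reduces to \eqref{linearbo*}, i.e.\ to a spectral gap below $\tfrac12$ for the reversible kernel $K$ of \eqref{gaq1} restricted to the orthogonal complement of the constants and of the magnetizations $\si_i$; the latter are the conserved modes, satisfying $K\bar\si_i=\tfrac12\bar\si_i$. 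At high temperature this gap is $\Theta(1/n)$, confirming that $\d=\Theta(1/n)$ is the correct order.

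The crux is ingredient (ii), the passage from the two-copy nonlinear form to the one-copy local entropies. The guiding heuristic is that the swap at site $i$, with the partner $\si'$ furnishing a fresh sample, acts like a single-site heat-bath update whose entropy production dominates $\mu[\ent_i(f)]$; since $\a_{\{i\}}=(1+e^{\b\phi_{\{i\}}})^{-1}$ depends on the neighbours of $i$ only through $\phi_{\{i\}}=O(\b\D)$, in this regime the partner-averaged update is a controlled perturbation of the exact heat-bath move. Crucially one cannot argue site by site: by Proposition \ref{genstat}(2) and Lemma \ref{stationa}, $D(f,f)=0$ precisely when $f\propto e^{\sum_i a_i\si_i}$, a \emph{global} family of ``tilted'' densities, whereas $\sum_i\mu[\ent_i(f)]$ vanishes only at $f\equiv1$. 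The role of the constraint $f\in\cF$ is exactly to reconcile these two null spaces: by Lemma \ref{stationa} it forces $f\mu$ to share the magnetizations of $\mu$, and hence (by strict convexity of the finite-volume pressure in the external fields) excludes every nontrivial tilt, leaving only $f\equiv1$. Without it no positive $\d$ can hold, in direct analogy with the remark following Theorem \ref{main2}.

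I expect step (ii) to be the main obstacle, and it is why the statement is recorded as a conjecture. The form $D_i$ is genuinely quadratic in $f$: the partner factor $f(\si')f(\si'^i)$ cannot simply be replaced by its $\mu$-average once $f$ is far from $1$, because the second copy is distributed according to the nonequilibrium density $p=f\mu$ rather than $\mu$. Establishing (ii) therefore requires controlling the coupling between the two copies \emph{beyond} the linearization of Lemma \ref{linear}, while using only the weak correlations available from $|\b|\D\leq c$ and the global constraint $f\in\cF$ to neutralize the tilt directions. This is precisely the kind of difficulty that makes the Cercignani conjecture delicate for the Boltzmann equation, where a clean linear lower bound on entropy production by entropy fails without additional structure.
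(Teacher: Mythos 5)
You should first be aware that the paper contains no proof of this statement: it is stated and left as Conjecture~\ref{isingconj}, and the only thing the authors establish about it is the degenerate case $\b=0$, where $\a_A\equiv\tfrac12$ and the model reduces to single-site recombination, so that Corollary~\ref{corodio} applies. There is therefore no paper argument to compare yours against; the only question is whether your proposal actually settles the conjecture, and---as you yourself say---it does not. Your reduction is sensible: splitting $D(f,f)=\tfrac1n\sum_i D_i(f,f)$, invoking approximate tensorization $\ent(f)\leq C\sum_i\mu[\ent(f\tc\{i\}^c)]$ under a Dobrushin-type condition $|\b|\D\leq c$ (a genuinely available, citable input), and checking the linearized statement via Lemma~\ref{linear} and the kernel $K$ of \eqref{gaq1} are all correct and useful observations, as is the identification of the null-space structure: $D(f,f)=0$ exactly on the tilted densities $f\propto e^{\sum_i a_i\si_i}$ (Proposition~\ref{genstat}, Lemma~\ref{stationa}), while the constraint $f\in\cF$ pins the magnetizations and, by strict convexity of the pressure in the fields, kills every nontrivial tilt.

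The genuine gap is your ingredient (ii), the claimed bound $D(f,f)\geq\tfrac cn\sum_i\mu[\ent(f\tc\{i\}^c)]$ for $f\in\cF$, for which you offer only the heuristic that the partner-averaged swap behaves like a heat-bath update. This is not a technical loose end but the entire content of the conjecture: every other ingredient in your plan is soft or standard, and all of the nonlinear difficulty is concentrated here. Two features make it hard, and your proposal names them without overcoming them: the second copy is distributed according to $p=f\mu$ rather than $\mu$, so no linearization or perturbation argument around equilibrium applies when $f$ is far from $1$; and the constraint $f\in\cF$ must enter \emph{quantitatively} (since (ii) is false without it, the tilted densities being counterexamples), whereas your argument uses it only to match null spaces, which gives no lower bound with a constant uniform in $f$, $n$, and ${\bf h}$. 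Note that even in the product case $\b=0$ the paper does not prove such a pointwise comparison between $D(f,f)$ and conditional entropies; it instead passes through the subadditivity estimate \eqref{olent5} of Theorem~\ref{main2}, whose proof (via submodularity and the refined Shearer bounds) has no obvious analogue once $\mu$ is not a product measure. So your write-up is a reasonable research plan that correctly diagnoses the obstruction, but the conjecture remains exactly as open after it as before.
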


Note that if $\b=0$, then $\a_A\equiv \tfrac12$,  then we are back to the single site recombination model, for which Conjecture \ref{isingconj} holds by Corollary \ref{corodio}.
We turn now to the description of some possible variants of the nonlinear stochastic Ising model. 
\subsubsection{Foldings}
A further stochastic Ising model is obtained as follows. Given a pair $(\si,\si')\in\cX\times \cX$, let $B=B(\si,\si')$ denote the set of vertices where they agree: $B=\{i\in[n]:\,\si_i=\si_i'\}$. Then define the kernel
\begin{equation}
\label{folding}
Q(\si,\si';\t,\t')= \frac1{\cZ(\si,\si')}\,\mu(\si_B\t_{B^c})\mu(\si_B\t'_{B^c})\ind(\t'_{B^c}=-\t_{B^c})\,,
\end{equation}
where $\cZ(\si,\si')$ is the normalizing constant
$$
\cZ(\si,\si')=\sum_{\t\in\{-1,+1\}^{B^c}}\mu(\si_B\t_{B^c})\mu(\si_B(-\t_{B^c})).
$$
It is not hard to check that $\cG=Q-\ind$ defines a RQS with the required properties for any measure $\mu= \mu_{G,\b,{\bf h}}\in\cP_+(\cX)$ as in \eqref{is1}. Again $Q$ does not depend on the external field ${\bf h}$ and all choices of ${\bf h}$ produce a valid stationary state. In the case $\b=0$ one has that \eqref{folding} coincides with the binary uniform crossover example \eqref{qbits} above. 
We expect that for small $\b$ an estimate as in Conjecture \ref{isingconj} should hold for this model as well. 
The kernel $Q$ in \eqref{folding} is an example of a ``folding" transformation in the terminology introduced in \cite{BG}.  

\subsubsection{Adding a dissipative term }\label{dissi}
The previous models are conservative in the sense that single site marginals are constant in time. 
One can obtain a non-conservative evolution by adding a dissipative term as follows. The added terms can be interpreted as mutation operators in the context of genetic algorithms; see, e.g., \cite{goldberg}.
 
Fix a graph $G$ with vertex set $V=[n]$, $\b\in\bbR$ and a set of external fields ${\bf h}$, and let $\mu= \mu_{G,\b,{\bf h}}$ be the associated Ising Gibbs measure. Suppose that $W(\si;\t)$ is the usual Glauber 
dynamics kernel for $\mu$, i.e., 
\begin{equation}
\label{glaub}
W(\si;\t)= \frac1n\sum_{i=1}^n \left[\mu\left(\si^i\tc \{\si\}\cup\{\si^i\})\ind(\t=\si^i\right) + \mu\left(\si\tc \{\si\}\cup\{\si^i\}\right)\ind(\t=\si)\right]\,,
\end{equation}
where again, for any $\si\in\cX$, $\si^i\in\cX$ is obtained from $\si$ by reversing the spin at $i$.
Define
$$
\wt\cG(\si,\si';\t,\t')=(W(\si;\t)-\ind(\si=\t))\ind(\si'=\t')+ (W(\si';\t')-\ind(\si'=\t'))\ind(\si=\t)\,.
$$ 
As in Remark \ref{remo1}, the RQS $(\wt \cG,\mu)$ defines a linear evolution, namely the Glauber dynamics. 
If $\cG$ is the generator of the RQS $(\cG,\mu)$ introduced in \eqref{genis}, then  one can define a new nonlinear RQS $(\cG',\mu)$ with generator $\cG'=\cG+\wt\cG$. It is not hard to check that $(\cG',\mu)$ satisfies the required properties. Moreover, in this case one has the following quantitative convergence results. 

\begin{theorem}\label{isingdis}
Fix the graph $G$ with $n$ vertices, $\b\in\bbR$,  and a set of external fields ${\bf h}$. Let $p_t$ denote the evolution according to the RQS $(\cG',\mu)$ defined above with $\mu= \mu_{G,\b,{\bf h}}$.
\begin{enumerate}[1)]
\item There exists $c(\b,G)>0$ independent of ${\bf h}$ such that 
for all $p\in\cP(\cX)$, and all $t\geq 0$:
\begin{equation}
\label{dis1}
H(p_t\tc\mu)\leq e^{-c(\b,G)\,t}H(p\tc\mu)\,.
\end{equation}
\item There exists a constant $c>0$ independent of $G$, $\b$ and ${\bf h}$ such that
for all $\b\in\bbR$ with $|\b|\leq c/\D$, where $\D$ is the maximal degree of $G$, and for all  $p\in\cP(\cX)$, and all $t\geq 0$:
\begin{equation}
\label{dis2}
H(p_t\tc\mu)\leq e^{-c\,t/n}H(p\tc\mu)\,.
\end{equation}
\end{enumerate}
\end{theorem}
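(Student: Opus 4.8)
The plan is to exploit the additive structure $\cG'=\cG+\wt\cG$, reducing the entropy production estimate for the nonlinear system to the \emph{linear} modified log-Sobolev inequality (MLSI) for the Glauber dynamics $\cG_0=W-\ind$ of \eqref{glaub}. First I would observe that the functional $D$ is linear in the generator: from the representation $D(f,f)=-\tfrac12\widehat\mu[(\cG F)\log F]$ in \eqref{covFG}, with $F(\si,\si')=f(\si)f(\si')$, one gets $D_{\cG'}(f,f)=D_{\cG}(f,f)+D_{\wt\cG}(f,f)$. By Proposition \ref{genstat}(1) applied to the RQS $(\cG,\mu)$ one has $D_{\cG}(f,f)\geq0$, so $D_{\cG'}(f,f)\geq D_{\wt\cG}(f,f)$ and the nonlinear term can simply be dropped. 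I would also note that, because of the dissipative term, the only stationary measure in $\cP_+(\cX)$ is $\mu$ itself (any stationary density $g$ must satisfy $g(\si)=g(\si^i)$ for all $i$ through the Glauber transitions, hence be constant); thus the conservation law \eqref{conslaw} is vacuous, $\cF=\{f\geq0:\mu[f]=1\}$, and this is why the conclusion holds for \emph{all} $p\in\cP(\cX)$.

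Next I would identify $D_{\wt\cG}(f,f)$ explicitly. Since $\wt\cG$ is the lift to $\cX\times\cX$ of the single-site generator $\cG_0=W-\ind$, one has $\wt\cG F(\si,\si')=(\cG_0f)(\si)f(\si')+f(\si)(\cG_0f)(\si')$, while $\log F(\si,\si')=\log f(\si)+\log f(\si')$. Expanding $-\tfrac12\widehat\mu[(\wt\cG F)\log F]$ into four terms and using $\mu[\cG_0f]=0$ together with $\mu[f]=1$, the two ``cross'' terms vanish and the remaining two coincide, yielding the clean identity $D_{\wt\cG}(f,f)=-\mu[(\cG_0 f)\log f]$. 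This is exactly the entropy production of the linear Glauber dynamics, so by Definition \ref{defent} and Proposition \ref{entroprop} the theorem reduces to proving the MLSI
\[
-\mu[(\cG_0 f)\log f]\geq \d\,\ent(f),\qquad f\geq0,\ \mu[f]=1,
\]
with $\d=c(\b,G)$ for part 1) and $\d=c/n$ for part 2).

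For part 1) the Glauber chain on the fixed finite graph $G$ at finite $\b$ is irreducible and reversible with respect to $\mu\in\cP_+(\cX)$, hence its MLSI constant is strictly positive for each fixed ${\bf h}$; the content is the uniformity in ${\bf h}$. I would obtain this by comparison with the product ($\b=0$) case: for a product measure entropy is subadditive, $\ent(f)\leq\sum_i\mu[\ent_i(f)]$, and the single-site two-point heat-bath chain has an MLSI constant bounded below by a universal constant uniformly in its bias, so $-\mu[(\cG_0 f)\log f]\geq (c_0/n)\,\ent(f)$ with $c_0$ independent of the fields. A Holley--Stroock-type perturbation then transfers this bound to $\b\neq0$ at the cost of a factor $e^{-\mathrm{osc}(\b H)}$, and since $\mathrm{osc}(\b H)\leq 2|\b|\,|E|$ does not involve ${\bf h}$, the resulting constant $c(\b,G)$ is uniform in the external fields. (Alternatively one may argue by continuity and compactness of the MLSI constant over ${\bf h}\in[-\infty,\infty]^V$, whose boundary limits are frozen-spin Glauber dynamics with positive constants.)

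For part 2) I would invoke the sharp behavior of the Glauber MLSI in the Dobrushin uniqueness regime: when $|\b|\leq c/\D$ the single-site dynamics satisfies approximate tensorization of entropy uniformly in the external fields, giving an MLSI constant of order $1/n$; this yields $\d=c/n$ with $c$ independent of $G$, $\b$ and ${\bf h}$. The main obstacle is precisely this uniform-in-${\bf h}$ control of the Glauber MLSI: the universal lower bound on the two-point MLSI constant and the field-free perturbation underlying part 1), and the sharp $\Theta(1/n)$ high-temperature estimate underlying part 2), which is a substantial (though by now classical) input. Everything else --- the decomposition, the nonnegativity of $D_{\cG}$, and the identity $D_{\wt\cG}(f,f)=-\mu[(\cG_0 f)\log f]$ --- is elementary.
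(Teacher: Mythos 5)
Your proposal is correct and follows essentially the same route as the paper: decompose $\cG'=\cG+\wt\cG$, discard the nonnegative nonlinear contribution $D_{\cG}(f,f)\geq 0$ (Proposition \ref{genstat}, part 1), and conclude from a modified log-Sobolev inequality for Glauber dynamics that is uniform in ${\bf h}$ for general $\b$ (part 1) and of order $1/n$ at high temperature (part 2). The only differences are cosmetic: the paper applies the product-chain inequality on $\cX\times\cX$ directly to $F(\si,\si')=f(\si)f(\si')$ and cites Proposition 1.1, Lemma 2.2 and Corollary 2.3 of \cite{CapMenzTet} for the required constants, whereas you pass to the single-chain inequality via the identity $D_{\wt\cG}(f,f)=-\mu[(\cG_0 f)\log f]$ and sketch proofs (tensorization plus Holley--Stroock, and approximate tensorization of entropy at high temperature) of those same inputs.
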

The main difference between the two estimates above is that the first is valid for any $\b$ and involves a constant $c(\b,G)$ that may be exponentially small as a function of the graph $G$ (see the proof below for an explicit expression), while the second is a bound with decay rate of order $1/n$ that is valid only at sufficiently high temperature.  In any case, Theorem \ref{isingdis} shows that in contrast with the conservative case, all initial states $p\in\cP(\cX)$ converge to the same 
equilibrium point $\mu$. Indeed, in this case the kernel $W$, which depends on the external fields, drives the system towards the equilibrium $\mu$. 
\begin{proof}[Proof of Theorem \ref{isingdis}]
For the first estimate, it is sufficient to prove that \eqref{entrobo} holds for all $f:\cX\mapsto[0,\infty)$ with $\d=c(\b,G)>0$. Notice that we do not restrict to any particular class of functions here. 
With the notation of \eqref{covFG} we have
$$
D(f,f) = -\frac12\widehat\mu\left[(\cG F+\wt\cG F)\log F\right] \geq -\frac12\widehat\mu\left[(\wt\cG F)\log F\right], 
$$
where we use the fact that $-\widehat\mu\left[(\cG F)\log F\right]\geq 0$, which follows from the representation in Proposition \ref{genstat}, part 1. Since the operator $\wt\cG$ corresponds to two independent Glauber dynamics on $\cX\times\cX$, from Proposition 1.1 and Lemma 2.2 in \cite{CapMenzTet} one has the modified logarithmic Sobolev inequality
$$
-\widehat\mu\left[(\wt\cG F)\log F\right]\geq c(\b,G)\,\widehat\mu\left[F\log F\right]\,,
$$
where $c(\b,G)=n^{-1}e^{-6|\b|\,|E|}$ and $|E|$ is the number of edges in $G$.
This proves part 1 since $\widehat\mu\left[F\log F\right]=2\ent(f)$.

To prove part 2 we use the same argument, but now observe that for some absolute constant $c>0$,  for  
$|\b|\leq c/\D$,  by Corollary 2.3 in \cite{CapMenzTet} one has
$$
-\widehat\mu\left[(\wt\cG F)\log F\right]\geq \frac{c}{n}\,\widehat\mu\left[F\log F\right]\,.
$$
\end{proof}
  
\subsection{Further examples}
Many more examples of RQS can be constructed by using suitable Markov chain generators in the product space $\cX\times \cX$. This allows one to construct natural and possibly useful nonlinear versions of various familiar stochastic processes such as random walks or card shuffling. We refer to \cite{Sinetal} for an interesting application to matchings in graphs. 

Our discussion in this paper is limited to the quadratic case where two independent samples from a given population interact to produce a new population. However, it is not difficult to generalize the setting by considering more than just two samples from the starting population. One may then obtain higher order nonlinear equations. For instance, a cubic version of the recombination process \eqref{pct} would take the form
 \begin{equation}\label{hpct}
\dert \,p_t=
 \sum_{A,B,C} \nu(A,B,C) \,(p_{t,A}\otimes p_{t,B}\otimes p_{t,C} - p_t)\,,\quad t\geq 0,
\end{equation}
where $A,B,C$ form a partition of $[n]$, and $\nu$ is a probability over such partitions.
Generalized recombination models of this kind have been recently considered in \cite{Baakeetal}. 
Further generalizations (which are not necessarily even mass-preserving) can be found in
the field of ``mass action kinetics" introduced in \cite{Feinberg,HornJackson}, which remains a very active area today.

\section{Entropy production estimate for recombinations}\label{proofs}
In this section we prove Theorem \ref{main}, Theorem \ref{main2} and Corollary \ref{corodio}. Most of the work goes into proving the lower bounds on the constant $\d(\nu)$ in Theorem \ref{main}. 
The first step consists in reducing this problem to the more tractable problem of controlling the constant $\k(\nu)$ in Theorem \ref{main2}.

Consider the RQS $(\cG,\mu)$ defined by $\cG=Q-\ind$ where  
\begin{equation}\label{qbitsgen}
Q(\si,\si;\t,\t')=\sum_{A}\nu(A)\ind(\t=\si'_A\si_{A^c},\t'=\si_A\si'_{A^c}),
\end{equation}
for some distribution $\nu$ on subsets of $[n]$, and   
$\mu=\otimes_{i=1}^n\mu_i$  an arbitrary product measure on the product space $\cX=X_1\times\cdots\times X_n$, where $X_i$ are given finite spaces. Assume that $\mu(\si)>0$ for all $\si\in\cX$. 

For any $A\subset[n]$, let $\cX_A=\prod_{i\in A}X_i$, and for any $f:\cX\mapsto [0,\infty)$, write $f_A$ for 
the function $f_A:\cX_A\mapsto[0,\infty)$ defined by
 \begin{equation}\label{marg}
f_A(\si_A)= \sum_{\si'\in\cX} \mu(\si') f(\si_A\si'_{A^c}).
\end{equation}
Notice that, if $f$ is a density with respect to $\mu$ (i.e., $\mu[f]=1$) then $f_A$ is the density of the marginal of $f\mu$ on $\cX_A$. It will often be convenient to regard $f_A$ as a function on the whole space $\cX$ simply by setting $f_A(\si)=f_A(\si_A)$. If $p\in\cP(\cX)$, we write $p_A$ for the marginal on $\cX_A$. Thus, if $f=p/\mu$, then $f_Af_{A^c}$ denotes the density of $p_A\otimes p_{A^c}$ with respect to $\mu$. When $A=\emptyset$, we set $f_{\emptyset}=1$. 
When $A=\{i\}$ is a singleton, we simply write $f_i=f_{\{i\}}$ for the single site marginal. 
\begin{lemma}
\label{le1}
Fix an arbitrary distribution $\nu$ on subsets of $[n]$. Let $\cS_\mu$ denote the set of $f:\cX\mapsto [0,\infty)$ such that $\mu[f]=1$ and $f_i = 1$ for all $i=1,\dots,n$. Suppose that there exists $\d>0$ such that 
\begin{equation}\label{lent1}
 \sum_{A} \nu(A)\,\mu[f_A f_{A^c}\log f]\leq (1-\d)\,\ent(f)
\end{equation}
for all $f\in\cS_\mu$.  
Then the 
RQS $(\cG,\mu)$ satisfies the entropy production estimate with constant $\d$, as defined in Definition \ref{defent}.
\end{lemma}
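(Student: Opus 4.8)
Recall from Definition~\ref{defent} that for the RQS $(\cG,\mu)$ with $\cG = Q - \ind$ and $Q$ given by \eqref{qbitsgen}, we must show that $D(f,f) \geq \d\,\ent(f)$ for all $f \in \cF$. The first order of business is to identify $\cF$ with (a superset of) $\cS_\mu$ in this setting. Since the stationary measures $\r \in \cP_+(\cX)$ are exactly the product measures by Lemma~\ref{simple}, the conserved quantities $\log(\r/\mu) = \sum_i \log(\r_i/\mu_i)$ span precisely the space of functions that are sums of single-site functions. The constraint \eqref{conslaw} that $\mu[f\log(\r/\mu)] = \mu[\log(\r/\mu)]$ for all such $\r$ translates, after varying $\r_i$, into the statement that the single-site marginals $f_i$ of $f$ all equal $1$. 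Hence $\cF = \cS_\mu$, and it suffices to prove $D(f,f) \geq \d\,\ent(f)$ for $f \in \cS_\mu$.

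\emph{Next I would compute $D(f,f)$ explicitly for this generator.} Using the representation of $D$ from Proposition~\ref{genstat} part~1 together with the crossover structure \eqref{qbitsgen}, the sum over $(\t,\t')$ collapses: for each $A$ with $\nu(A) > 0$, the collision sends $(\si,\si')$ to $(\si'_A\si_{A^c}, \si_A\si'_{A^c})$. Writing $f(\si)f(\si')$ for the ``incoming'' product and $f(\si'_A\si_{A^c})f(\si_A\si'_{A^c})$ for the ``outgoing'' one, and summing against $\mu(\si)\mu(\si')$, the logarithmic factor becomes $\log\tfrac{f(\si)f(\si')}{f(\si'_A\si_{A^c})f(\si_A\si'_{A^c})}$. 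After symmetrizing and using $\mu = \otimes_i \mu_i$, I expect the key identity
\begin{equation*}
D(f,f) = \sum_A \nu(A)\,\mu\!\left[(f - f_A f_{A^c})\log\frac{f}{f_A f_{A^c}}\right],
\end{equation*}
which is manifestly nonnegative since $(a-b)\log(a/b) \geq 0$, and which is exactly the left-hand side of \eqref{bord}. The point is that averaging $f(\si_A\si'_{A^c})$ over $\si'$ against $\mu$ produces precisely $f_A(\si_A)f_{A^c}(\si_{A^c})$, by the definition \eqref{marg} and the product structure of $\mu$.

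\emph{Finally I would algebraically rearrange this identity into the form \eqref{lent1}.} Expanding the logarithm as $\log f - \log(f_A f_{A^c})$ and distributing, the quantity $D(f,f)$ breaks into four terms. Using $\mu[f \log f] = \ent(f) + \mu[f]\log\mu[f] = \ent(f)$ (since $\mu[f]=1$) and the fact that $f_A f_{A^c}$ is a genuine density so that $\mu[f_A f_{A^c}] = 1$, together with the identity $\mu[f \log(f_A f_{A^c})] = \mu[f_A f_{A^c} \log(f_A f_{A^c})]$ (which holds because $\log(f_A f_{A^c})$ depends on $\si$ only through its $A$- and $A^c$-marginals, against which $f$ and $f_A f_{A^c}$ integrate identically), one finds that the hypothesis \eqref{lent1}, namely $\sum_A \nu(A)\,\mu[f_A f_{A^c}\log f] \leq (1-\d)\ent(f)$, is equivalent to $D(f,f) \geq \d\,\ent(f)$.

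\emph{The main obstacle} will be the careful bookkeeping in the last step, in particular verifying the marginal identity $\mu[f \log(f_A f_{A^c})] = \mu[f_A f_{A^c}\log(f_A f_{A^c})]$ and confirming that all the marginal-averaging manipulations are consistent with the product form of $\mu$; once these are in hand, the inequality follows by pure rearrangement with no analytic estimation required.
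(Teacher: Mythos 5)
Your proposal is correct and takes essentially the same route as the paper: both hinge on the identity $D(f,f)=\ent(f)-\sum_{A}\nu(A)\,\mu[f_A f_{A^c}\log f]$ (your form $\sum_{A}\nu(A)\,\mu\bigl[(f-f_Af_{A^c})\log\tfrac{f}{f_Af_{A^c}}\bigr]$ is the same quantity, since $\mu[(f-f_Af_{A^c})\log (f_Af_{A^c})]=0$ by marginalization), together with the reduction from $\cF$ to $\cS_\mu$ obtained by testing \eqref{conslaw} on product stationary measures. One caution: because $\nu$ is arbitrary in this lemma, Lemma \ref{simple} is not available and the equality $\cF=\cS_\mu$ you assert can fail (take, e.g., $\nu(\emptyset)=1$, for which every measure is stationary); however, your proof only needs the inclusion $\cF\subseteq\cS_\mu$, which your varying-$\rho_i$ argument does establish since product measures are stationary for every $\nu$ --- exactly the distinction drawn in the paper's remark following the lemma.
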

\begin{proof}
Rewrite the functional $D(f,f)$ as 
\begin{align*}
D(f,f)=-\sum_{A}\nu(A)\sum_{\t,\t'\in\cX}
\mu(\t)\mu(\t')\left(f(\t'_A\t_{A^c})f(\t_A\t'_{A^c}) - f(\t)f(\t')
\right)\log f(\t).
\end{align*}
With the notation $f_A$ for the marginal densities, and using the product structure of $\mu$, the above expression becomes 
\begin{equation}\label{lent2}
D(f,f)= \ent(f) - \sum_{A} \nu(A)\,\mu[f_A f_{A^c}\log f]\,.
\end{equation}
To conclude the proof it remains to show that $\cF\subset \cS_\mu$, where $\cF$ is the set of functions from  Definition \ref{defent}. Indeed, suppose that $f\in\cF$. Then for any stationary $\r\in\cP_+(\cX)$ one has $\mu[f\log(\r/\mu)]=\mu[\log(\r/\mu)]$. Let us show that $f_i=1$ for all $i\in[n]$. Take $\r\in\cP_+(\cX)$ of the form $\r= \otimes_{i=1}^n\r_i$ for some probability measures $\r_i$ on $X_i$. Then $\r$ is stationary. Choosing $\r_j=\mu_j$ for all $j\neq i$, one has  
$$
\mu[f_i\log(\r_i/\mu_i)]=\mu[\log(\r_i/\mu_i)]\,.
$$
Since the $\r_i$'s are arbitrary, it follows that $f_i=1$ for all $i$. 
\end{proof}
We remark that since the distribution $\nu$ in Lemma \ref{le1} is arbitrary the inclusion $\cF\subset\cS_\mu$ used in the previous proof may be strict. However, by Lemma \ref{simple}, if $\nu$ is  nondegenerate then one has that $\cF=\cS_\mu$ since the only stationary measures are of product form. 

Notice that 
\eqref{lent1} is equivalent to 
\begin{equation}\label{lent4}
 \sum_{A} \nu(A)\,\mu\left[(f_A f_{A^c}-f)\log {\frac{f_A f_{A^c}}f}\right]\geq \d\,\ent(f)\,.
\end{equation}
This follows from the fact that for any $A$ one has $$\mu\left[(f_A f_{A^c}-f)\log (f_A f_{A^c})\right]=0.$$ 
Thus, the largest constant $\d$ such that \eqref{lent1} holds for all $f\in\cS_\mu$ is precisely
the constant $\d(\pi,\nu)$ appearing in inequality~\eqref{bord} (with $\pi$ replaced by $\mu$).

The following observation 
allows us to reduce \eqref{lent4} to a more tractable expression. 
Suppose $f=p/\mu$ for some $p\in\cP(\cX)$.  Then 
$$
\mu[f_A f_{A^c}\log f] = - (p_A\otimes p_{A^c})\left[\log\left(\tfrac{p_A\otimes p_{A^c}}p\right)\right] + (p_A\otimes p_{A^c})\left[\log\left(\tfrac{p_A\otimes p_{A^c}}\mu\right)\right].
$$
Rearranging and using $\mu=\mu_A\otimes\mu_{A^c}$ one has the identity 
\begin{equation}\label{lent3}
\mu[f_A f_{A^c}\log f]= -H(p_A\otimes p_{A^c}\tc p) + H(p_A\tc \mu_A) + H(p_{A^c}\tc \mu_{A^c})\,.
\end{equation}
In particular, 
\begin{align*}\label{lent3}
\mu[f_A f_{A^c}\log f] & \leq  H(p_A\tc \mu_A) + H(p_{A^c}\tc \mu_{A^c}) \\
&= \mu[f_A \log f_A] + \mu[f_{A^c} \log f_{A^c}] = \ent(f_A) + \ent(f_{A^c})\,.
\end{align*}
Since the above holds for arbitrary product measures~$\mu$,
with the notation of Lemma \ref{le1} we have obtained the following criterion. 
\begin{lemma}
\label{le2}
Suppose that, for all $f\in\cS_\mu$,  
\begin{equation}\label{lent5}
 \sum_{A} \nu(A)\,\left( \ent(f_A) + \ent(f_{A^c})\right)\leq (1-\kappa)\,\ent(f)\,,
\end{equation}
with some $\kappa>0$. Then \eqref{lent1} 
holds with constant $\d=\kappa$.  In particular, if $\d(\nu)$ and $\k(\nu)$ are the constants introduced in Theorem \ref{main} and Theorem \ref{main2}, then 
for any distribution $\nu$,
\begin{equation}\label{pr1}
\d(\nu)\geq \k(\nu)\,.
 \end{equation}
\end{lemma}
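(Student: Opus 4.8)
The plan is to establish the inequality chain that connects the two functionals appearing in \eqref{lent4} and \eqref{lent5}, and then pass to the infimum over product measures. The statement of Lemma~\ref{le2} asserts two things: first, that \eqref{lent5} with constant $\kappa$ implies \eqref{lent1} with $\d=\kappa$; and second, the consequence \eqref{pr1} that $\d(\nu)\geq\k(\nu)$ for any $\nu$. The first part is essentially already contained in the discussion immediately preceding the lemma statement, so I would begin by recalling that computation cleanly, and the second part is a short argument about infima.

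First I would recall the identity \eqref{lent3}, namely
\begin{equation*}
\mu[f_A f_{A^c}\log f]= -H(p_A\otimes p_{A^c}\tc p) + H(p_A\tc \mu_A) + H(p_{A^c}\tc \mu_{A^c}),
\end{equation*}
valid for $f=p/\mu$. Since relative entropy is nonnegative, the term $-H(p_A\otimes p_{A^c}\tc p)\leq 0$, so that $\mu[f_A f_{A^c}\log f]\leq H(p_A\tc\mu_A)+H(p_{A^c}\tc\mu_{A^c})$. Next I would identify each marginal relative entropy with an entropy functional: since $f_A$ is the density of $p_A$ with respect to $\mu_A$ and $\mu[f_A\log f_A]=H(p_A\tc\mu_A)$ while $\mu[f_A]=1$ forces the subtracted term in $\ent(f_A)$ to vanish, we get $H(p_A\tc\mu_A)=\ent(f_A)$, and similarly for $A^c$. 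Summing against $\nu(A)$ then yields
\begin{equation*}
\sum_{A}\nu(A)\,\mu[f_A f_{A^c}\log f]\leq \sum_{A}\nu(A)\bigl(\ent(f_A)+\ent(f_{A^c})\bigr)\leq (1-\kappa)\,\ent(f),
\end{equation*}
where the last step is the hypothesis \eqref{lent5}. This is exactly \eqref{lent1} with $\d=\kappa$, proving the first assertion.

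For the consequence \eqref{pr1}, I would argue as follows. Fix a product measure $\mu$. By the first part, if \eqref{lent5} holds for all $f\in\cS_\mu$ with constant $\kappa$, then \eqref{lent1} holds with $\d=\kappa$; taking $\kappa=\k(\mu,\nu)$, the largest admissible constant in \eqref{lent5}, we conclude $\d(\mu,\nu)\geq \k(\mu,\nu)$, since $\d(\mu,\nu)$ is by definition the largest $\d$ for which \eqref{lent1} holds. Now taking the infimum over all product measures $\mu$ and all underlying spaces $X_i$, and using the monotonicity of the infimum, we obtain $\d(\nu)=\inf_\mu\d(\mu,\nu)\geq\inf_\mu\k(\mu,\nu)=\k(\nu)$, which is \eqref{pr1}.

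Honestly I expect no serious obstacle here: the proof is a direct assembly of the identity \eqref{lent3} (already derived), the nonnegativity of relative entropy, and the definitions of the optimal constants. The only point requiring mild care is bookkeeping with the marginal functionals—verifying that $\ent(f_A)=H(p_A\tc\mu_A)$ exactly, which relies on $\mu[f_A]=\mu[f]=1$ so that the $-\pi[f]\log\pi[f]$ correction term in the definition \eqref{entrop} of $\ent$ vanishes—and making sure the passage to the infimum respects the direction of the inequality. Since both $\d(\nu)$ and $\k(\nu)$ are defined as infima over the \emph{same} family of product measures, the comparison is immediate.
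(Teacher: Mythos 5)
Your proof is correct and follows essentially the same route as the paper: the paper's own argument is precisely the discussion preceding the lemma, namely the identity \eqref{lent3}, dropping the nonnegative term $H(p_A\otimes p_{A^c}\tc p)$, identifying $H(p_A\tc\mu_A)=\ent(f_A)$ (using $\mu[f_A]=1$), applying the hypothesis \eqref{lent5}, and then passing to the infimum over product measures. Your added bookkeeping about why the $-\mu[f_A]\log\mu[f_A]$ correction vanishes is a correct and harmless elaboration of what the paper leaves implicit.
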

We turn now to the analysis of the constant $\k(\nu)$. 
We start by recalling some preliminary inequalities.

\subsection{Tensorization and Shearer-type inequalities}
%
We recall that for any probability measure $\mu$, for any $A\subset [n]$, one has the decomposition
 \begin{align}\label{decompo}
  \ent(f)  =  \ent(\mu[f\tc A])  + \mu[\ent(f\tc A)],
\end{align}
where $\ent(f\tc A) = \mu\left[f\log (f/\mu[f\tc A])\tc A\right]$ denotes the entropy of $f$ with respect to the conditional probability measure $\mu[\cdot\tc A]$, obtained by conditioning $\mu$ on a given realization of  the variables $\si_{A}\in\cX_A$. The decomposition \eqref{decompo} is obtained by adding and subtracting $\mu[f\log\mu[f\tc A]]$ from $\ent(f)$. 
We note that if $\mu$ is a product measure, then $\mu[f\tc A]$ coincides with $f_A$ defined in~eqref{marg}. 

Let $\cA$ be an arbitrary family of subsets $A\subset[n]$.
Let $\deg_k(\cA)$ denote the degree of a site $k$ in $\cA$, i.e., the number of subsets $A\in\cA$ such that $A\ni k$, and set 
$$
n_-(\cA) = \min\{\deg_k(\cA)\,,\;k\in[n]\}\,,\quad n_+(\cA) = \max\{\deg_k(\cA)\,,\;k\in[n]\},
$$ 
for the minimal and maximal degrees, respectively. 
\begin{proposition}\label{shearer}
Let $\mu$ be  a product measure. For any family of subsets $\cA$ and any function $f\geq0$, 
\begin{align}\label{lent9}
n_-(\cA)\, \ent(f) \leq 
\sum_{A\in\cA} \mu[\ent(f\tc A^c)]\,.
 \end{align}
Equivalently, for any $\cA$ and any function $f\geq0$, 
 \begin{align}\label{lent10}
  \sum_{A\in\cA} \ent(f_A)\leq n_+(\cA)\, \ent(f).
\end{align}
\end{proposition}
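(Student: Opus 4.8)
The plan is to prove the second inequality \eqref{lent10} and to recover \eqref{lent9} from it by complementation. To see the equivalence, apply the decomposition \eqref{decompo} with conditioning set $A^c$; since $\mu$ is a product measure one has $\mu[f\tc A^c]=f_{A^c}$, whence $\mu[\ent(f\tc A^c)]=\ent(f)-\ent(f_{A^c})$. Substituting this into \eqref{lent9} and rearranging turns it into $\sum_{A\in\cA}\ent(f_{A^c})\leq (|\cA|-n_-(\cA))\,\ent(f)$. Writing $\cB=\{A^c:A\in\cA\}$, one has $\deg_k(\cB)=|\cA|-\deg_k(\cA)$, so $n_+(\cB)=|\cA|-n_-(\cA)$, and the displayed bound is exactly \eqref{lent10} for the family $\cB$. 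As complementation is an involution on families of subsets, proving \eqref{lent10} for all $\cA$ gives \eqref{lent9} for all $\cA$. Finally, since $\ent(\lambda f)=\lambda\,\ent(f)$ and $f\mapsto f_A$ is linear, both sides of \eqref{lent10} are positively homogeneous in $f$, so I may assume $\mu[f]=1$.

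To prove \eqref{lent10} I would fix the ordering $1<2<\dots<n$ of the sites and, for $i\notin J$, set $E_i(J):=\ent(f_{J\cup\{i\}})-\ent(f_J)$. By \eqref{decompo} this equals $\mu[\ent(f_{J\cup\{i\}}\tc J)]\geq0$, the conditional entropy contribution of site $i$ given the coordinates in $J$. Telescoping \eqref{decompo} along the chosen order yields the chain rule $\ent(f_A)=\sum_{i\in A}E_i(A\cap[i-1])$ for every $A\subseteq[n]$; the case $A=[n]$ gives $\ent(f)=\sum_{i=1}^n E_i([i-1])$.

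The crux is the monotonicity $E_i(J)\leq E_i(K)$ whenever $J\subseteq K\subseteq[n]\setminus\{i\}$: conditioning on more coordinates can only increase the contribution of site $i$. I would prove this by setting $g=f_{K\cup\{i\}}$ and noting that $f_{J\cup\{i\}}$ is obtained from $g$ by averaging over the coordinates in $K\setminus J$. Denoting that partial average by $\mu_{K\setminus J}[\cdot]$ and using that the single-site entropy functional $\phi\mapsto\ent_{\mu_i}(\phi)$ is convex (for instance via its variational representation as a supremum of linear functionals), Jensen's inequality gives, for each fixed $\si_J$,
\[
\ent_{\mu_i}\!\big(\mu_{K\setminus J}[g]\big)\leq \mu_{K\setminus J}\big[\ent_{\mu_i}(g)\big].
\]
Averaging this over $\si_J$ with respect to $\mu_J$ and using $\mu_K=\mu_J\otimes\mu_{K\setminus J}$ turns the left-hand side into $E_i(J)$ and the right-hand side into $E_i(K)$, which is the claim. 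This convexity step is the main obstacle; everything surrounding it is bookkeeping.

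Granting the monotonicity, I apply it to each term of the chain rule with $J=A\cap[i-1]\subseteq[i-1]=K$, giving $E_i(A\cap[i-1])\leq E_i([i-1])$. Summing first over $A\in\cA$ and then over $i\in A$, and using $E_i([i-1])\geq0$ together with $\deg_i(\cA)\leq n_+(\cA)$, I obtain
\[
\sum_{A\in\cA}\ent(f_A)=\sum_{A\in\cA}\sum_{i\in A}E_i(A\cap[i-1])\leq\sum_{i=1}^n \deg_i(\cA)\,E_i([i-1])\leq n_+(\cA)\sum_{i=1}^nE_i([i-1])=n_+(\cA)\,\ent(f),
\]
which is precisely \eqref{lent10}. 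Together with the equivalence established in the first paragraph, this proves both inequalities.
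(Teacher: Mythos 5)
Your proof is correct, but it takes a genuinely different route from the paper's. The paper proves \eqref{lent10} by translating everything into Shannon entropy: writing $Z$ for a random variable with law $f\mu$, it uses the identity $\sum_{i\in A}H(Z_i)-H(Z_A)=\ent(f_A)-\sum_{i\in A}\ent(f_i)$, invokes the classical Shearer inequality $n(\cA)\,H(Z)\leq\sum_{A\in\cA}H(Z_A)$ as a black box for regular covers, and then handles arbitrary families by padding $\cA$ with singletons until it becomes a regular cover. You instead work directly with the functional $\ent$ and in effect reprove Shearer's inequality in the relative-entropy setting: your chain rule $\ent(f_A)=\sum_{i\in A}E_i(A\cap[i-1])$ together with the monotonicity $E_i(J)\leq E_i(K)$ for $J\subseteq K$ (obtained from convexity of $\phi\mapsto\ent_{\mu_i}(\phi)$ via its variational representation and Jensen) is exactly the standard proof of Shearer, transplanted from $H$ to $\ent$; note the sign flip relative to Shannon entropy (your $\ent$-increments grow under extra conditioning, while Shannon conditional entropies shrink), which your Jensen argument handles transparently. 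Your monotonicity lemma is moreover equivalent to supermodularity of $A\mapsto\ent(f_A)$, i.e., to the paper's Lemma \ref{submole}, which the paper again derives from Shannon-entropy submodularity; so your argument also gives a self-contained proof of that later lemma. What the paper's route buys is brevity, given the cited Shearer inequality; what yours buys is self-containedness (no appeal to Shannon entropy or to an external reference) and no need for the regular-cover/padding step. The equivalence of \eqref{lent9} and \eqref{lent10} is treated the same way in both: complementation of the family combined with the decomposition \eqref{decompo}.
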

\begin{proof}
The equivalence of \eqref{lent9} and \eqref{lent10} follows from \eqref{decompo} by passing from $\cA$ to the complementary set of subsets $\bar\cA = \{A^c, A\in\cA\}$. 
We prove \eqref{lent10}  as a consequence of the classical Shearer estimate for Shannon entropy \cite{chung1986some}. Suppose first that $\cA$ is  a {\it regular cover} of $[n]$, i.e., the union
of $A\in\cA$ is $[n]$ and the degrees $n(\cA):=\deg_k(\cA)$ are independent of $k\in[n]$. 
By homogeneity, we may assume 
$\mu[f]=1$. Call $Z=(Z_1,\dots,Z_n)$ the random variable with 
probability distribution $f\mu$, so that $f_A\mu_{A}$ is the law of the marginal $Z_A=(Z_i,\;i\in A)$; see \eqref{marg}.
For any $A$, the Shannon entropy $H(Z_A)$ of $Z_A$ satisfies
\begin{align}\label{shannon}
H(Z_A)&=-\sum_{\si_A}f_A(\si_A)\mu_{A}(\si_A) \log(f_A(\si_A)\mu_{A}(\si_A)) \nonumber \\ & = -\ent(f_A) -
 \sum_{\si_A}\sum_{i\in A}  f_A(\si_A)\mu_{A}(\si_A) \log (\mu_{i}(\si_i))\nonumber \\
 & = -\ent(f_A) -
 \sum_{i\in A} \sum_{\si_i} f_i(\si_i)\mu_{i}(\si_i) \log (\mu_{i}(\si_i))\nonumber \\
&=  - \ent(f_A) + \sum_{i\in A} H(Z_i) + \sum_{i\in A}\mu[f_i\log f_i].
\end{align}
Equivalently,
\begin{align}\label{shannon1}
\sum_{i\in A} H(Z_i) -H(Z_A)
= \ent(f_A) - \sum_{i\in A}\ent(f_i).
\end{align}
Shearer's estimate for the Shannon entropy 
states that
\begin{align}\label{shear1}
n(\cA)\, H(Z)\leq \sum_{A\in\cA} H(Z_A);
\end{align}
see, e.g., \cite{chung1986some} or \cite{BB} for a proof.
Therefore, summing over $A\in\cA$ in \eqref{shannon1} and using \eqref{shear1},  
\begin{align}\label{shea3}
  &\sum_{A\in\cA} \ent(f_A) - n(\cA)\sum_{i\in[n]}\ent(f_i)
  \leq  n(\cA)\sum_{i\in[n]}H(Z_i) - n(\cA) H(Z).
\end{align}
Applying \eqref{shannon1} with $A=[n]$ to the right hand side of~\eqref{shea3}, one obtains 
\eqref{lent10}. 

Suppose now that $\cA$ is arbitrary, so that it need not cover $[n]$ or have uniform degrees. 
Then one can add singleton sets to $\cA$ until one obtains a regular cover $\cA'$ such
that $n_+(\cA)=n(\cA')$.  It then follows that 
$$
\sum_{A\in\cA} \ent(f_A)\leq \sum_{A\in\cA'} \ent(f_A)\leq 
n(\cA')\, \ent(f) = n_+(\cA) \,\ent(f). 
$$
This ends the proof of \eqref{lent10}.
\end{proof}

By applying Proposition \ref{shearer} to the dyadic cover $\cA=\{A,A^c\}$ one obtains that, for any $A\subset [n]$ and any nonnegative function $f$:
\begin{align}\label{lent8}
\ent(f)&\leq \mu\left[\ent(f\tc A)\right] + \mu\left[\ent(f\tc A^c)\right];
\\
\ent(f_A) + \ent(f_{A^c}) &\leq \ent(f). \label{lent88}
\end{align}
The bounds \eqref{lent8} and \eqref{lent88} express respectively the well known tensorization and subadditivity properties of a product measure. 
%
%
%


 \subsection{Proof of Theorem \ref{main2}}
Let us point out first that a simple application of Proposition \ref{shearer} is not sufficient to prove Theorem \ref{main2}. For instance, consider the uniform crossover model where $\nu(A)=2^{-n}$ for any $A\subset[n]$. Then the left hand side of \eqref{lent5} becomes
 \begin{equation}\label{lent140}
2^{-n+1}\sum_A \ent(f_A)\,.
\end{equation}
If $f\in\cS_\mu$, then $$\ent(f_i)=\mu[f_i\log f_i]=0,$$ and, from Proposition \ref{shearer} applied to the 
$k$-set cover $\cA_k=\{A,\;|A|=k\}$, with $n_\pm(\cA)=\binom{n-1}{k-1}$ one has
$$
\sum_{A\in\cA_k} \ent(f_A) \leq \binom{n-1}{k-1} \ent(f),
$$
for all $k=2,\dots,n$. 
These observations show that 
\begin{equation}\label{lent15}
2^{-n+1}\sum_A \ent(f_A)\leq 2^{-n+1}\ent(f)\sum_{k=2}^n \binom{n-1}{k-1} = (1-2^{-n+1})\ent(f).
\end{equation}
It follows that \eqref{lent5} holds with $\kappa=2^{-n+1}$, a very poor estimate compared with the constant $\k(\nu)=(1-2^{-n+1})/(n-1)$ appearing in Theorem \ref{main2}.

The key ingredient in our analysis is a suitable refinement of Proposition \ref{shearer}.
Various improved versions of Shearer bounds have been proved recently; see \cite{MT,BB}. Our refinements below (see Lemmas~\ref{delstar} and~\ref{delstarg})
appear to be new, and are inspired by the work of Balister and Bollob{\'a}s \cite{BB}, who emphasized the  role played by the sub-modularity property of entropy. Recall that a map $A\mapsto h(A)$, $A\subset[n]$ is called {\em sub-modular} if for all $A,B\subset [n]$
\begin{equation}\label{submo}
h(A)+h(B)\geq h(A\cap B) + h(A\cup B).
\end{equation}
We shall need the following simple lemma.
\begin{lemma}\label{submole}
For every nonnegative function $f$ and any product measure $\mu$, the map 
\begin{equation}\label{submodu}
A\mapsto h(A)=-\ent(f_A)
\end{equation} 
is sub-modular, and $h(\emptyset)=0$. 
\end{lemma}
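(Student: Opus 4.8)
The plan is to derive the sub-modularity of $h$ from the classical sub-modularity of Shannon's entropy, using the identity \eqref{shannon1} already recorded in the proof of Proposition \ref{shearer}. The claim $h(\emptyset)=0$ is immediate: by the convention $f_\emptyset=1$ one has $h(\emptyset)=-\ent(1)=0$, since $\mu[1]=1$ and $1\cdot\log 1=0$.

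For the inequality \eqref{submo} I would first reduce to the normalized case $\mu[f]=1$. Indeed $\ent$ is homogeneous of degree one, $\ent(cf)=c\,\ent(f)$ for $c>0$, and $(cf)_A=cf_A$, so replacing $f$ by $f/\mu[f]$ (when $\mu[f]>0$; the case $f\equiv 0$ being trivial, as then $f_A\equiv 0$ and all entropies vanish) multiplies every value $h(A)$ by the same positive constant and hence preserves \eqref{submo}. Having normalized, I let $Z=(Z_1,\dots,Z_n)$ be distributed according to $f\mu$, so that $f_A\mu_A$ is the law of $Z_A$, exactly as in the proof of Proposition \ref{shearer}.

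The key step is then to rearrange \eqref{shannon1}, valid for every $A\subset[n]$, into the form
\begin{equation*}
\ent(f_A)=\sum_{i\in A}\bigl(H(Z_i)+\ent(f_i)\bigr)-H(Z_A).
\end{equation*}
Here $A\mapsto\sum_{i\in A}\bigl(H(Z_i)+\ent(f_i)\bigr)$ is additive, hence modular. On the other hand Shannon's entropy is sub-modular, $H(Z_A)+H(Z_B)\geq H(Z_{A\cap B})+H(Z_{A\cup B})$, equivalently the conditional mutual information $I(Z_{A\setminus B};Z_{B\setminus A}\mid Z_{A\cap B})$ is nonnegative; hence $A\mapsto -H(Z_A)$ is super-modular. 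Adding the modular term, $A\mapsto\ent(f_A)$ is super-modular, i.e.
\begin{equation*}
\ent(f_A)+\ent(f_B)\leq \ent(f_{A\cap B})+\ent(f_{A\cup B}),
\end{equation*}
which is precisely \eqref{submo} for $h=-\ent(f_{\,\cdot\,})$.

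I do not expect a genuine obstacle here: the only nontrivial ingredient is the sub-modularity of Shannon entropy, which is classical, and the content of the lemma is the observation that passing from $H(Z_A)$ to $\ent(f_A)$ introduces only the modular correction $\sum_{i\in A}\bigl(H(Z_i)+\ent(f_i)\bigr)$, which affects neither sub- nor super-modularity. The one point requiring care is the homogeneity/normalization reduction, since the probabilistic interpretation of $f_A\mu_A$ as the law of $Z_A$, and with it the use of \eqref{shannon1}, presupposes that $f\mu$ is a genuine probability measure.
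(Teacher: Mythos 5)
Your proof is correct and follows essentially the same route as the paper, which also deduces the lemma directly from the identity \eqref{shannon1} together with the classical sub-modularity \eqref{shent} of Shannon entropy. Your added care about normalization (via homogeneity of $\ent$) and the convention $f_\emptyset=1$ only makes explicit details the paper leaves implicit.
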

\begin{proof}
This follows from \eqref{shannon1}, and the fact that  
\begin{equation}\label{shent}
H(Z_A)+H(Z_B)\geq H(Z_{A\cap B}) + H(Z_{A\cup B})\,,
\end{equation}
for all $A,B\subset[n]$, and any random variable $Z=(Z_1,\dots,Z_n)$. The proof of \eqref{shent} is standard; see, e.g., \cite{BB}. 
\end{proof}
We now proceed with the proof of Theorem \ref{main2}, beginning with the case of uniform crossover. 
\subsubsection{Uniform crossover}
Our analysis hinges on the following improved Shearer bound, which sharpens
inequality~\eqref{lent10} in Proposition~\ref{shearer}.
\begin{lemma}\label{delstar}
For every $n\geq 2$, for any sub-modular map $h$ with $h(\emptyset)=0$,
\begin{equation}\label{submo*}
\sum_{A\subset [n]} h(A)\geq \frac{(n-2)2^{n-1}+1}{n-1}\,h([n])\,+\,\frac{2^{n-1}-1}{n-1}\sum_{i=1}^n h(\{i\})\,.
\end{equation}
\end{lemma}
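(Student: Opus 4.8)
The plan is to exploit the permutation symmetry of the inequality to reduce it to a one-dimensional statement about concave sequences. First I would \emph{symmetrize}: given a sub-modular $h$ with $h(\emptyset)=0$, set $\bar h(A)=\frac{1}{n!}\sum_{\pi} h(\pi(A))$, the average over all permutations $\pi$ of $[n]$. Since sub-modularity \eqref{submo} is preserved by the coordinate permutation $A\mapsto\pi(A)$ (because $\pi(A\cap B)=\pi(A)\cap\pi(B)$ and likewise for unions) and by averaging, $\bar h$ is again sub-modular with $\bar h(\emptyset)=0$; moreover $\bar h$ is invariant under permutations, so $\bar h(A)=\phi(|A|)$ for some $\phi\colon\{0,\dots,n\}\to\bbR$ with $\phi(0)=0$. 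Crucially, all three quantities entering \eqref{submo*} are unchanged when $h$ is replaced by $\bar h$: indeed $\sum_A h(A)=\sum_A\bar h(A)$, $h([n])=\bar h([n])$, and $\sum_i h(\{i\})=\sum_i\bar h(\{i\})$ (relabelling $\pi(i)$). Hence it suffices to establish \eqref{submo*} for the symmetric function, i.e. to prove
\[
\sum_{k=0}^n \binom nk \phi(k)\ \geq\ \alpha\,\phi(n)+\beta\, n\,\phi(1),\qquad \alpha:=\tfrac{(n-2)2^{n-1}+1}{n-1},\quad \beta:=\tfrac{2^{n-1}-1}{n-1}.
\]

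Next I would identify exactly which $\phi$ arise. Testing sub-modularity of $\bar h$ on sets with $|A\cap B|=m$ and $|A|=|B|=m+1$ gives $2\phi(m+1)\geq\phi(m)+\phi(m+2)$, so $\phi$ is \emph{concave}, and this is the only constraint. Writing $d_k=\phi(k)-\phi(k-1)$, concavity means $d_1\geq\cdots\geq d_n$, and with $e_j:=d_j-d_{j+1}\geq 0$ one obtains the decomposition
\[
\phi(k)=d_n\,k+\sum_{j=1}^{n-1} e_j\,\min(k,j),\qquad e_j\geq 0,\ d_n\in\bbR.
\]
Thus $\phi$ is a nonnegative combination of the ``tent'' functions $\phi_j(k)=\min(k,j)$ plus an arbitrary multiple of the linear function $k\mapsto k$. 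Since the target is linear in $\phi$ and, as I check below, holds with \emph{equality} for $\phi(k)=k$ (the modular case, both sides equalling $n2^{n-1}$), the whole statement reduces to the single atomic inequality
\[
L(j):=\sum_{k=0}^n\binom nk\min(k,j)\ \geq\ \alpha\, j+\beta\, n,\qquad j=1,\dots,n-1,
\]
using $\phi_j(n)=j$ and $\phi_j(1)=1$.

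Finally I would settle the atomic inequality by a concavity/chord argument, which is where the specific values of $\alpha,\beta$ enter. The first difference telescopes:
\[
L(j)-L(j-1)=\sum_{k=0}^n\binom nk\bigl(\min(k,j)-\min(k,j-1)\bigr)=\sum_{k=j}^n\binom nk,
\]
which is strictly decreasing in $j$, so $L$ is concave with second difference $-\binom nj$. A direct evaluation gives $L(1)=2^n-1$ and $L(n)=n\,2^{n-1}$, and one verifies $\alpha+\beta n=2^n-1=L(1)$ and $n(\alpha+\beta)=n2^{n-1}=L(n)$. Hence the linear right-hand side $R(j)=\alpha j+\beta n$ is \emph{exactly} the secant of $L$ through $(1,L(1))$ and $(n,L(n))$; as $L-R$ is concave and vanishes at $j=1$ and $j=n$, it is nonnegative at every intermediate integer, giving $L(j)\geq R(j)$. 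I expect the main conceptual step to be the symmetrization, which collapses the problem over all $2^n$ subsets to a one-variable concave sequence: the naive alternative of summing the set-by-set bounds of Proposition~\ref{shearer} is provably too lossy (cf.\ the computation around \eqref{lent15}), and a per-pair strengthening of \eqref{submo*} fails, so the averaging built into $\bar h$ is essential.
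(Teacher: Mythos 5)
Your proof is correct, but it takes a genuinely different route from the paper's. The paper works directly with the unsymmetrized $h$ via the level sums $\varphi_k=\sum_{|A|=k}h(A)$: a telescoping application of sub-modularity along chains of sets yields the recursion $\varphi_k\geq \frac1k\binom{n}{k-1}\varphi_n+\frac{n-k}{k}\varphi_{k-1}$, which is then iterated down to $\varphi_1$ and summed over $k$, with the resulting coefficient identities verified partly by plugging in the test function $h(A)=\ind(A\neq\emptyset)$. You instead symmetrize first --- a legitimate reduction, since sub-modularity survives permutation and averaging, and all three functionals in \eqref{submo*} are permutation-invariant --- which collapses the problem to concave sequences $\phi(k)$ with $\phi(0)=0$; you then decompose the cone of such sequences into the linear ray $\phi(k)=k$ (where you correctly check equality, both sides being $n2^{n-1}$) plus nonnegative combinations of the tents $\min(k,j)$, and settle each atom by observing that the right-hand side is exactly the secant of the concave function $L(j)=\sum_k\binom nk\min(k,j)$ through $(1,2^n-1)$ and $(n,n2^{n-1})$. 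It is worth noting that the two proofs exploit the same underlying fact: the paper's recursion, rewritten in terms of $\phi(k)=\varphi_k/\binom nk$, is precisely the concavity estimate $(n-k+1)\phi(k)\geq\phi(n)+(n-k)\phi(k-1)$. What your organization buys is transparency --- the extremal structure (linear $\phi$, i.e.\ modular symmetric functions, and the chord endpoints $j=1,n$) explains where the constants come from, and you avoid the iteration and the explicit coefficients $c(k,n),d(k,n)$; what the paper's buys is that it never leaves the level of individual sets and transfers verbatim to the weighted version in Lemma~\ref{delstarg} (though your method would also extend there, since the weights $\gamma^{|A|}$ are permutation-invariant). One small overstatement in your closing remark: averaging is not ``essential,'' since the paper proves the lemma without any explicit symmetrization; but this is commentary and does not affect the validity of your argument.
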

\begin{proof}
For any $1\leq k\leq n$,  define  $\cA_k = \{A\subset[n]:\, |A|=k\}$, and  
$$
\varphi_k=\sum_{A\in\cA_k} h(A).$$ 
Let us show that, for all $2\leq k\leq n-1$,
\begin{equation}
\label{bb3}
\varphi_k\geq \frac1k\binom{n}{k-1}\varphi_n + \frac{n-k}{k}\varphi_{k-1}\,.
\end{equation}
Write $$\varphi_k= 
\frac1k\sum_{A'\in\cA_{k-1}}\sumtwo{A\in\cA_k:}{A'\subset A}h(A).
$$
Let us first prove that, for any $A'\in\cA_{k-1}$, one has 
\begin{equation}\label{bb2}
\sumtwo{A\in\cA_k:}{A'\subset A}h(A)\geq h([n]) + (n-k) h(A') .
\end{equation}
Assume, without loss of generality, that $A'=\{1,\dots,k-1\}$. The sum above then becomes
$$
\sumtwo{A\in\cA_k:}{A'\subset A}h(A)=\sum_{j=k}^{n}h(A'\cup\{j\}).
$$
From sub-modularity \eqref{submo}, one has $$h({A'\cup\{k\}}) + h({A'\cup\{k+1\}})\geq h({A'}) + 
h({A'\cup\{k,k+1\}}).
$$ Set $U_\ell=A'\cup\{k,\ldots,k+\ell\}$, for $\ell\in\{0,\dots,n-k\}$. Then, recursively:
$$
\sum_{j=k}^{k+\ell} h({A'\cup\{j\}}) \geq \ell\, h({A'}) + h({U_\ell}). 
$$
Setting $\ell=n-k$ proves \eqref{bb2}, since ${U_{n-k}}=[n]$. 
Next, using \eqref{bb2} and noting that $h([n])=\varphi_n$, one has 
\eqref{bb3}.

Iterating \eqref{bb3}, we arrive at
\begin{equation}
\label{bb4}
\varphi_k\geq c(k,n)
\varphi_n+ d(k,n)
\,\varphi_{1}\,,
\end{equation}
where
\begin{gather*}
d(k,n)=\frac{(n-2)!}{k!(n-k-1)!},\\
c(k,n)=\left[\frac1k\binom{n}{k-1} + \frac{(n-k)}{k(k-1)}
\binom{n}{k-2}+\dots
+\frac{n(n-3)!}{k!(n-k-1)!}\right].
\end{gather*}
Note that \eqref{bb4} holds for all $k=2,\dots,n-1$. It can be extended to $k=1,\dots,n$ as well, provided we set $c(1,n)=0$, $c(n,n)=1$,  $d(n,n)=0$ and $d(1,n)=1$. 
Thus, the claim \eqref{submo*} follows once we prove
\begin{equation}
\label{lab1}
\sum_{k=1}^nc(k,n) =  \frac{(n-2)2^{n-1}+1}{n-1}
\end{equation}
and
\begin{equation}
\label{lab2}
\sum_{k=1}^nd(k,n) =  \frac{2^{n-1}-1}{n-1}.
\end{equation}
The identity \eqref{lab2} follows from 
$$
\sum_{k=1}^{n-1} \frac{(n-2)!}{k!(n-k-1)!} = \frac1{n-1}\sum_{k=1}^{n-1} \binom{n-1}{k}
= \frac{(2^{n-1}-1)}{n-1}.
$$
To check \eqref{lab1}, notice that 
if $h(A)=\ind(A\neq\emptyset)$, then \eqref{bb2} and \eqref{bb3} are both identities. Therefore, \eqref{bb4} is an identity as well, for all $k=1,\dots,n$ (with the above definitions of $c(k,n)$ and $d(k,n)$). Since here $\varphi_k=\binom{n}{k}$, summing over $k=1,\dots,n$ in \eqref{bb4} one has
$$
2^n -1 = \sum_{k=1}^nc(k,n)  + n \,\frac{2^{n-1}-1}{n-1},
$$
which is equivalent to \eqref{lab1}. This ends the proof of \eqref{submo*}.
\end{proof}
From Lemmas \ref{delstar} and \ref{submole},
\begin{equation}\label{submos*}
\sum_{A\subset [n]} \ent(f_A)\leq \frac{(n-2)2^{n-1}+1}{n-1}\,\ent(f)\,+\,\frac{2^{n-1}-1}{n-1}\sum_{i=1}^n \ent(f_i)\,.
\end{equation}
Since $f\in\cS_\mu$, one has $\ent(f_i)=0$ for all $i$, and therefore 
\begin{equation}\label{submoso*}
2^{-n}\sum_{A\subset [n]} \left(\ent(f_A)+\ent(f_{A^c})\right)\leq \frac{(n-2)+2^{-n+1}}{n-1}\,\ent(f)\,,
\end{equation}
which implies the lower bound $\k(\nu)\geq (1-2^{-n+1})/(n-1)$.

To prove the upper bound, we argue as follows. Suppose $X_1=\cdots =X_n=X$ so that $\cX= X^n$, and suppose $\mu$ is a product of identical probability measures $\mu_0$ on $X$. Let $Z_0$ denote a random variable with values in $X$ with 
distribution $\mu_0$, and call $Z$ the random variable $(Z_0,\dots,Z_0)$ with values in $\cX$, i.e., $Z$ consists of $n$ identical copies of $Z_0$. Clearly, \begin{equation}\label{lent109}
H(Z_A)=H(Z_0)=-\sum_{x\in X}\mu_0(x)\log \mu_0(x),\end{equation}
for any $A\subset[n]$, $A\neq\emptyset$.  Next, let $f$ denote the probability density of $Z$ with respect to $\mu$, i.e., $f\mu$ is the law of $Z$. Notice that $f\in\cS_\mu$, since $f_i=1$ for all $i$. 
By \eqref{shannon1} one has, for all $A\subset[n]$,
 \begin{equation}\label{lent110}
\ent(f_A)=(|A|-1)\ind(A\neq\emptyset)H(Z_0).
\end{equation}
It follows that 
$$
\sum_{A\subset [n]} \ent(f_A)= [(n-2)2^{n-1}+1]H(Z_0).$$
Since $\ent(f)=(n-1)H(Z_0)$ we see that this choice of $f$ saturates the bound \eqref{submos*}.
This concludes the proof of Theorem \ref{main2} for uniform crossover.

\subsubsection{The Bernoulli($q$) model} 
We need the following extension of Lemma \ref{delstar}. 
\begin{lemma}\label{delstarg}
For every $\g\in(0,\infty)$, for any sub-modular map $h$ with $h(\emptyset)=0$,
\begin{equation}\label{submog}
\sum_{A\subset [n]} \g^{|A|}h(A)\geq \frac{(1+\g)^{n-1}[\g(n-1)-1]+1}{n-1}\,h([n])\,+\,\frac{(1+\g)^{n-1}-1}{n-1}\sum_{i=1}^n h(\{i\})\,.
\end{equation}
\end{lemma}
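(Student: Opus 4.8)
The plan is to recycle the combinatorial recursion from the proof of Lemma~\ref{delstar}, which is insensitive to the weight $\g$, and then reweight and resum. Indeed, the recursion \eqref{bb3} and hence its iterated form \eqref{bb4},
\begin{equation}
\varphi_k \geq c(k,n)\,\varphi_n + d(k,n)\,\varphi_1, \qquad k=1,\dots,n,
\end{equation}
with $\varphi_k = \sum_{A \in \cA_k} h(A)$, depend only on the sub-modularity of $h$ and not on $\g$; they therefore remain available verbatim, together with the boundary conventions $c(1,n)=0$, $d(1,n)=1$, $c(n,n)=1$, $d(n,n)=0$. Since $\g>0$, I would multiply the $k$-th instance of \eqref{bb4} by $\g^k$ and sum over $k$, using $h(\emptyset)=0$, $\varphi_n = h([n])$ and $\varphi_1 = \sum_{i=1}^n h(\{i\})$, to obtain
\begin{equation}
\sum_{A \subset [n]} \g^{|A|} h(A) = \sum_{k=1}^n \g^k \varphi_k \geq h([n]) \sum_{k=1}^n \g^k c(k,n) + \Big(\sum_{i=1}^n h(\{i\})\Big)\sum_{k=1}^n \g^k d(k,n).
\end{equation}
It then remains only to evaluate the two weighted coefficient sums and match them to the constants in \eqref{submog}.

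For the $d$-sum I would first record the closed form $d(k,n) = \tfrac1{n-1}\binom{n-1}{k}$ for $1\leq k \leq n-1$, which is exactly the manipulation already used in establishing \eqref{lab2}. The binomial theorem then gives
\begin{equation}
\sum_{k=1}^n \g^k d(k,n) = \frac1{n-1}\sum_{k=1}^{n-1}\binom{n-1}{k}\g^k = \frac{(1+\g)^{n-1}-1}{n-1},
\end{equation}
which is precisely the coefficient of $\sum_{i} h(\{i\})$ in \eqref{submog}.

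For the $c$-sum the cleanest route is to avoid summing the explicit expression for $c(k,n)$ and instead reuse the calibration already exploited in the proof of \eqref{lab1}: taking $h(A)=\ind(A\neq\emptyset)$ turns \eqref{bb4} into an identity \emph{termwise}, and since then $\varphi_k = \binom{n}{k}$, $\varphi_n=1$, $\varphi_1 = n$, this identity reads $c(k,n) = \binom{n}{k} - n\,d(k,n)$ for every $k$. Substituting and invoking the $d$-sum just computed,
\begin{equation}
\sum_{k=1}^n \g^k c(k,n) = \big[(1+\g)^n - 1\big] - \frac{n}{n-1}\big[(1+\g)^{n-1}-1\big],
\end{equation}
and a short simplification (writing $u=(1+\g)^{n-1}$ and collecting terms) brings this to $\tfrac1{n-1}\big((1+\g)^{n-1}[\g(n-1)-1]+1\big)$, matching the coefficient of $h([n])$ in \eqref{submog}. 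Combining the two evaluations yields \eqref{submog}.

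I do not anticipate a genuine obstacle: all the structural content, namely the sub-modularity recursion, is inherited from Lemma~\ref{delstar}, so what remains is elementary summation. The only points requiring mild care are the boundary conventions at $k=1$ and $k=n$ (so that \eqref{bb4} is legitimate at the endpoints and the $\g$-weighting introduces no spurious terms), and the final algebraic collapse of the $c$-sum, which is purely mechanical. It is worth emphasizing that the whole argument is valid for arbitrary $\g\in(0,\infty)$ because the termwise bound \eqref{bb4} is preserved under multiplication by the nonnegative weights $\g^k$; the special case $\g=1$ recovers Lemma~\ref{delstar}.
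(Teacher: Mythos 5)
Your proposal is correct and follows essentially the same route as the paper: the paper also multiplies the termwise bound \eqref{bb4} by $\g^k$, sums over $k$, and reduces the lemma to the two coefficient identities \eqref{lab1g}--\eqref{lab2g}, which it says "follow in the same way as \eqref{lab1}--\eqref{lab2}". Your write-up simply fills in those omitted details — the closed form $d(k,n)=\tfrac1{n-1}\binom{n-1}{k}$ with the binomial theorem, and the calibration $h(A)=\ind(A\neq\emptyset)$ giving the termwise relation $c(k,n)=\binom{n}{k}-n\,d(k,n)$ — exactly as the paper intends.
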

\begin{proof}
The left hand side of \eqref{submog} coincides with $\sum_{k=1}^n \hat\varphi_k$, where $\hat\varphi_k:=\g^k\varphi_k$ and $\varphi_k$ was defined in Lemma \ref{delstar}. 
From \eqref{bb4} it follows that
\begin{equation}
\label{bb4g}
\hat\varphi_k\geq \g^{k}c(k,n)
\varphi_n+ \g^{k}d(k,n)
\, \varphi_{1}\,.
\end{equation}
Thus \eqref{submog} will follow if we can prove
\begin{equation}
\label{lab1g}
\sum_{k=1}^n \g^k c(k,n) =  \frac{(1+\g)^{n-1}[\g(n-1)-1]+1}{n-1}
\end{equation}
and
\begin{equation}
\label{lab2g}
\sum_{k=1}^n \g^k d(k,n) =  \frac{(1+\g)^{n-1}-1}{n-1}.
\end{equation}
It is not hard to check that these identities follow in the same way as \eqref{lab1}-\eqref{lab2}.
\end{proof}

Next, observe that for $\nu(A)=q^{|A|}(1-q)^{n-|A|}$, one has 
\begin{align}\label{diac}
&\sum_{A\subset [n]} \nu(A)[\ent(f_A)+\ent(f_{A^c})] \nonumber\\
&\qquad\quad = (1-q)^n\sum_{A\subset [n]}(\tfrac{q}{1-q})^{|A|} \ent(f_A)
+ q^n\sum_{A\subset [n]}(\tfrac{q}{1-q})^{-|A|} \ent(f_A)\,.
\end{align}
We estimate each sum above
using Lemma \ref{delstarg} with $h(A)=-\ent(f_A)$, once with $\g=\tfrac{q}{1-q}$ and once with $\g=(\tfrac{q}{1-q})^{-1}$. This yields
\begin{align}\label{diac2}
\sum_{A\subset [n]} \nu(A)[\ent(f_A)+\ent(f_{A^c})]&
\leq \D(q,n)\,\ent(f),
\end{align}
where 
\begin{align*}
& \D(q,n) = (1-q)^n \,\frac{(1-q)^{1-n}
\left[\tfrac{q}{1-q}(n-1)-1\right]+1}{n-1} \,+ \\ &\;\;\qquad \qquad  + q^n\,\frac{q^{1-n}
\left[\tfrac{1-q}{q}(n-1)-1\right]+1}{n-1} = 1-\frac{1-q^{n}-(1-q)^n}{n-1}.
\end{align*}
This implies the lower bound
$$
\k(\nu)\geq \frac{1-q^{n}-(1-q)^n}{n-1}.
$$
To prove the upper bound, we argue as in \eqref{lent109}-\eqref{lent110}. Using the same function $f$ defined there, one has
\begin{align*}
&\sum_{A\subset [n]} \nu(A)(\ent(f_A)+ \ent(f_{A^c}))\\&
 = \sum_{A\subset [n]} q^{|A|}(1-q)^{n-|A|}\left[(|A|-1)\ind(A\neq\emptyset) + (|A^c|-1)\ind(A\neq[n]) \right]\,H(Z_0)
 \\ & = \left(n-2 + \nu([n]) + \nu(\emptyset)\right)H(Z_0) = (n-1)H(Z_0)\left[1-\tfrac{1-q^{n}-(1-q)^n}{n-1}\right].
\end{align*}
Since $\ent(f)=(n-1)H(Z_0)$, this choice of $f$ saturates the bound  \eqref{diac2}. This proves Theorem \ref{main2} for the Bernoulli($q$) model. 

\subsubsection{Single site recombination}\label{singles}
When $\nu(A) = \tfrac1n\sum_{i=1}^n\ind(A=\{i\})$, using $\ent(f_i)=0$ for all $i\in[n]$,  
\eqref{lent5} becomes
 \begin{equation}\label{lent6}
 \frac1n\sum_{i=1}^n\mu\left[f_{\{i\}^c} \log f_{\{i\}^c}\right]\leq (1-\kappa)\,\ent(f)\,.
\end{equation}
We note that an application of Proposition \ref{shearer} with the $(n-1)$-cover $\cA_{n-1}=\{\{i\}^c,\;i\in[n]\}$ gives  the inequality \eqref{lent6} with constant $\k=1/n$. 

To prove that it can be strengthened to $\k=1/(n-1)$ we observe that the left hand side of \eqref{lent6} can be written
  \begin{equation}\label{lent13}
 \frac1n\sum_{i=1}^n\mu\left[f_{\{i\}^c} \log f_{\{i\}^c}\right] = -\frac1n\,\varphi_{n-1},
\end{equation}
where $\varphi_k$ is defined in the proof of Lemma \ref{delstar} with $h(A)=-\ent(f_A)$.
From \eqref{bb4},
   \begin{equation}\label{lent113}
 \frac1n\sum_{i=1}^n\mu\left[f_{\{i\}^c} \log f_{\{i\}^c}\right] \leq  \frac1n\,c(n-1,n)\,\ent(f).
\end{equation}
The coefficient $c(n-1,n)$ can be computed as in the proof of Lemma \ref{delstar}, and one finds 
$c(n-1,n)=n - \tfrac{n}{n-1}$. 
Thus \eqref{lent6} holds with $\kappa=\tfrac1{n-1}$, which proves the lower bound 
$\k(\nu)\geq \tfrac1{n-1}$.


The upper bound follows as in \eqref{lent109}-\eqref{lent110}. Indeed, with that choice of $f$ and $\mu$ one has  
\begin{equation}\label{lent19}
 \frac1n\sum_{i=1}^n\mu\left[f_{\{i\}^c} \log f_{\{i\}^c}\right] = (n-2)H(Z_0) = \frac{n-2}{n-1}\,\ent(f)\,.
\end{equation}
This concludes the proof of Theorem \ref{main2} for single site recombinations.
\subsubsection{One-point crossover}\label{single}
Here $$\nu(A)=\frac1{n+1}\sum_{i=0}^n\ind(A=J_i),$$ where $J_0=\emptyset$ and $J_i=\{1,\dots,i\}$, $i\geq 1$. 
Fix $f\geq 0$ and define, for $i=1,\dots,n$: $$u(i) = \ent(f_{J_i}),\qquad \bar u(i) = \ent(f_{J_{i-1}^c})\,.$$ 
Notice that $J_{i}\cap J_{i-1}^c= \{i\}$, and $J_{i}\cup J_{i-1}^c=[n]$. Thus, from Lemma \ref{submole} one has
\begin{equation}\label{optg1}
u(i)+\bar u(i)\leq \ent(f_i) + \ent(f)\,.
\end{equation}
Therefore,
\begin{align}\label{diac1}
&\sum_{A\subset [n]} \nu(A)[\ent(f_A)+\ent(f_{A^c})]  = \frac1{n+1}\sum_{i=1}^n(u(i)+\bar u(i))
\nonumber\\
&\qquad\qquad\qquad\leq \frac{n}{n+1}\,\ent(f)+\frac1{n+1}\sum_{i=1}^n\ent(f_i)\,.
\end{align}
If $f\in\cS_\mu$ one has $\ent(f_i)=0$ for all $i\in[n]$, and \eqref{diac1} proves the desired upper bound. To prove that \eqref{diac1} is optimal, notice that with the argument in \eqref{lent109}-\eqref{lent110} one obtains $u(i)+\bar u(i)=  \ent(f)$ for every $i=1,\dots,n$, and therefore \eqref{diac1} is an identity for this choice of $f$. This proves Theorem \ref{main2} for the one-point crossover model, and thus concludes the proof of the theorem.

 \subsection{Proof of Theorem \ref{main}}
From Lemma \ref{le2} and Theorem \ref{main2} we have already obtained the desired lower bounds $\d(\nu)\geq \k(\nu)$.  The upper bounds on $\d(\nu)$ are based on the following estimate.  

\begin{proposition}\label{sharp}
Let $\nu$ be one of the four recombination distributions, and set $\cX=\{0,1\}^n$. 
Let $w=w(n)=2^{-n}$, and let $\mu$ be the product of  
Bernoulli($w$) probability measures. Define 
\begin{equation}\label{delta}
\D_\nu = \sum_{A}\nu(A)\left(2^{-|A|} + 2^{-|A^c|}\right).
\end{equation}
Then, 
$$\d(\mu,\nu)\leq  \frac{4(1- \D_\nu)}n + O(n^{-2}).$$
\end{proposition}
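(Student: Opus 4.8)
The plan is to realize the upper bound through a single explicit test function. By the identity \eqref{lent2} together with the equivalence of \eqref{lent1} and \eqref{lent4}, one has
\[
\d(\mu,\nu)=\inf_{f\in\cS_\mu}\frac{D(f,f)}{\ent(f)},\qquad
D(f,f)=\ent(f)-\sum_A\nu(A)\,\mu[f_Af_{A^c}\log f],
\]
so it suffices to produce one $f\in\cS_\mu$ whose ratio is at most $\tfrac{4(1-\D_\nu)}{n}+O(n^{-2})$. The natural candidate, the fully correlated measure that saturates $\k(\nu)$, is useless here because for it $\mu[f_Af_{A^c}\log f]=-\infty$ (the recombined law $p_A\otimes p_{A^c}$ charges configurations outside its support). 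Instead I would spread the mass uniformly: letting $U$ be the uniform measure on $\cX=\{0,1\}^n$ and $\delta_{\mathbf 0}$ the unit mass at the all-zeros string, set $\epsilon=2w=2^{1-n}$ and take
\[
p=(1-\epsilon)\,\delta_{\mathbf 0}+\epsilon\,U,\qquad f=p/\mu .
\]
Because $\delta_{\mathbf 0}$ contributes nothing to the one-dimensional marginals, $p$ has exactly $\mathrm{Ber}(w)$ marginals, so $f_i\equiv1$ and $f\in\cS_\mu$; and $p>0$ everywhere, so $\log f$ is finite and $D(f,f)<\infty$.

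The whole calculation is driven by the exchangeability of $f$ and by the fact that the uniform part produces, on any block $A$, the marginal $U_A=\mathrm{Ber}(\tfrac12)^{\otimes A}$ — the source of the factors $2^{-|A|}$ in $\D_\nu$. Using \eqref{lent3} I would record the decomposition $D(f,f)=B+E$ with
\[
B=\ent(f)-\sum_A\nu(A)\bigl(\ent(f_A)+\ent(f_{A^c})\bigr),\qquad
E=\sum_A\nu(A)\,H(p_A\otimes p_{A^c}\tc p)\ge0 .
\]
A direct expansion gives the scalings $\ent(f)=w n^2\log2\,(1+O(n^{-1}))$ and, with $n$ replaced by $|A|$, $\ent(f_A)=w n|A|\log2\,(1+O(n^{-1}))$: a nonzero configuration has $p$-mass $\epsilon2^{-n}$ and $\log(p/\mu)\approx|\sigma|\,n\log2$ because $-\log w=n\log2$, and summing against the binomial weights with $\sum_k k\binom nk=n2^{n-1}$ yields the stated orders. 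Here lies the role of the tuning $w=2^{-n}$: one factor of $n$ comes from $-\log w$ and a second from the typical number of ones, producing the $n^2$ in $\ent(f)$, while the content of the estimate sits one order lower. Since $|A|+|A^c|=n$, the leading $O(wn^2)$ terms cancel in $B$, which is therefore of order $wn\log2$.

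Carrying the expansion of $H(p_A\tc\mu_A)$ one order further — keeping the corrections proportional to $2^{-|A|}$ coming from $U_A$ and the $O(\epsilon)$ contribution of the atom at $\mathbf 0$ — and summing against $\nu$ gives, after the cancellations,
\[
\frac{B}{\ent(f)}=\frac{2(1-\D_\nu)}{n}+O(n^{-2}),
\]
the factor $(1-\D_\nu)$ arising precisely because $\sum_A\nu(A)\bigl(2^{-|A|}+2^{-|A^c|}\bigr)=\D_\nu$ by \eqref{delta}. A parallel computation of $E$, in which $p_A\otimes p_{A^c}=(1-\epsilon)^2\delta_{\mathbf 0}+(1-\epsilon)\epsilon\bigl(\delta_{\mathbf 0_A}\otimes U_{A^c}+U_A\otimes\delta_{\mathbf 0_{A^c}}\bigr)+\epsilon^2U$ is compared term by term with $p$, shows that the mixed products contribute at the same order $wn\log2$ and with the same coefficient, giving $E/\ent(f)=\tfrac{2(1-\D_\nu)}{n}+O(n^{-2})$. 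Adding the two yields $D(f,f)/\ent(f)=\tfrac{4(1-\D_\nu)}{n}+O(n^{-2})$, as claimed.

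The main obstacle is the asymptotic bookkeeping in the last step: one must expand each of $H(p\tc\mu)$, $H(p_A\tc\mu_A)$ and $H(p_A\otimes p_{A^c}\tc p)$ to the order $wn\log2$ lying just below the leading $wn^2\log2$, verify that every term not carrying a factor $2^{-|A|}$ or $2^{-|A^c|}$ cancels between $\ent(f)$ and the marginal entropies (so that only $\D_\nu$ survives), and control the error terms uniformly in $n$ and simultaneously for all four distributions $\nu$. Since $p$, $p_A$ and $p_A\otimes p_{A^c}$ are explicit finite mixtures of product measures, each relative entropy is an explicit sum over the binomial weights $\binom nk$, so the expansions themselves are routine; the only delicate points are the exactness of the cancellations and the uniform estimation of the $O(n^{-2})$ remainder.
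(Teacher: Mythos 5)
Your proposal is correct and is essentially the paper's own proof: the paper's test function \eqref{pchoice} equals your $p=(1-\epsilon)\delta_{\mathbf 0}+\epsilon U$ up to $O(2^{-2n})$ corrections (both lie in $\cS_\mu$ exactly, having Bernoulli$(2^{-n})$ marginals), and the paper likewise obtains the upper bound by asymptotically expanding $D(f,f)/\ent(f)$ for this near-degenerate mixture. Your only deviation is organizational—splitting $D(f,f)=B+E$ via \eqref{lent3} instead of computing $\wt p\,[\log(p/\mu)]$ directly as in \eqref{compu3}—and your claimed split checks out: each of $B$ and $E$ contributes $2(1-\D_\nu)\,n2^{-n}\log 2+O(2^{-n})$, which together recover the paper's conclusion \eqref{compu4}.
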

Let us first check that Proposition \ref{sharp} implies the upper bounds on $\d(\nu)$ announced in Theorem \ref{main}. Indeed, 
in the case of single site recombination $\nu(A)=\frac1n\sum_{i=1}^n\ind(A=\{i\})$, one has \begin{equation}\label{deltan1}
\D_\nu = \frac12 + O(2^{-n}),\end{equation}
and therefore $\d(\nu)\leq \d(\mu,\nu)\leq \tfrac2n + O(n^{-2})$.
In the case of one-point crossover, one finds easily that $\D_\nu=O(1/n)$, and therefore $\d(\nu)\leq \tfrac4n + O(n^{-2})$. 
For uniform crossover $\nu(A)= 2^{-n}$, one has that $\D_\nu$ is exponentially small, and thus again $\d(\nu)\leq \tfrac4n + O(n^{-2})$. 
Finally, for the Bernoulli($q$) model, one finds 
\begin{equation}\label{deltan}
\D_\nu = \sum_{k=0}^n\binom{n}{k}q^{k}(1-q)^{n-k}(2^{-k}+2^{-n+k}) = \left(1-\tfrac{q}2\right)^n + \tfrac1{2^n}(1+q)^n.
\end{equation}
Since $q\leq 1/2$, $\D_\nu= \left(1-q/2\right)^n +O\left((3/4)^n\right)$, which yields the claimed upper bound on $\d(\nu)$.

\begin{proof}[Proof of Proposition \ref{sharp}]
Let $B(u)\in\cP(\cX) $ denote the product of independent Bernoulli with parameter $u\in[0,1]$, so that $\mu=B(w)$, and define $f=p/\mu$,  where
\begin{align}\label{pchoice}
p=w^2B(1)+(1-w)^2B(0)+2w(1-w)B(\tfrac12).
\end{align}
It is easily checked that $p$ and $\mu$ have the same marginals, i.e., $p_i=\mu_i$, so that $f_i=1$ for all $i\in[n]$. 
Then $p$ can be written as
$$
p(\si)=a\ind(\si\equiv 1)  + b\ind(\si\equiv 0) + c\ind(\si\not\equiv 1\;\text{and}
\;\si\not\equiv 0),
$$
where $a=w^2+2w(1-w)2^{-n}$, $b=(1-w)^2+2w(1-w)2^{-n}$, and $c=2w(1-w)2^{-n}$.
The relative entropy is given by
\begin{align}\label{compu01}
\ent(f) = a \log \left(\tfrac{a}{w^n}\right) + b\log\left(\tfrac{b}{(1-w)^n}\right) +c\sum_{k=1}^{n-1}
\binom{n}{k}\log\left(\tfrac{c}{w^k(1-w)^{n-k}}\right).
\end{align}
Using $c\sum_{k=1}^{n-1}
\binom{n}{k} = 2w(1-w)(1-2^{-n+1})$ and $c\sum_{k=1}^{n-1}
\binom{n}{k} k = nw(1-w)(1-2^{-n+1})$, 
\begin{align}\label{compu010}
\ent(f) & = a \log \left(\tfrac{a}{w^n}\right) + b\log\left(\tfrac{b}{(1-w)^n}\right) +
\\ & \;\; + 2w(1-w)(1-2^{-n+1})\log\left(\tfrac{c}{(1-w)^{n}}\right) + nw(1-w)(1-2^{-n+1})\log\left(\tfrac{1-w}{w}\right). \nonumber 
\end{align}
Using $w=2^{-n}$,  one finds 
\begin{gather*}
a\log \left(\tfrac{a}{w^n}\right) =O(2^{-n})\,;\qquad b\log\left(\tfrac{b}{(1-w)^n}\right)=n2^{-n} + O(2^{-n})\,;\\
2w\log\left(\tfrac{c}{(1-w)^{n}}\right) = -4n2^{-n}\log 2 + O(2^{-n})\,;\qquad 
nw\log\left(\tfrac{1-w}{w}\right)= n^22^{-n}\log 2+ O(2^{-n}).
\end{gather*}
Therefore,
\begin{align}\label{compu1}
\ent(f) 
=n^22^{-n}\log 2 - 4 n2^{-n}\log 2  + n2^{-n} + O(2^{-n}).
\end{align}
Next, we compute the entropy production $D(f,f)$. 
Define \begin{align}\label{compu0}
\wt p(\si) = \sum_A\nu(A) (p_A\otimes p_{A^c})\,.
\end{align}
As in \eqref{lent2} we write
\begin{align}\label{compu3}
D(f,f) = \ent(f) - 2^{-n}\sum_A\mu[f_Af_{A^c}\log f]= \ent(f)-\wt p\,[\log(p/\mu)]\,.
\end{align}
Observe that
\begin{align*}
&\wt p\,[\log(p/\mu)]  = \a_n \log \left(\tfrac{a}{w^n}\right) + \a_0 \log\left(\tfrac{b}{(1-w)^n}\right) +\sum_{k=1}^{n-1}\a_{k}
\log\left(\tfrac{c}{w^k(1-w)^{n-k}}\right),
\end{align*}
where $\a_k=\sum_{\si:\,|\si|=k} \wt p(\si)$ and $|\si|$ denotes the number of $1$'s in $\si$.
From the conservation of marginals one has that $\sum_{k=1}^nk\a_k=nw$. Therefore,
\begin{align}\label{compu0030}
\wt p\,[\log(p/\mu)]  
& = \a_n \log \left(\tfrac{a}{w^n}\right) + \a_0 \log\left(\tfrac{b}{(1-w)^n}\right)+\nonumber\\
& \qquad  (1-\a_0-\a_n)\log\left(\tfrac{c}{(1-w)^n}\right) + (nw - n\a_n)
\log\left(\tfrac{1-w}{w}\right).
\end{align}
From \eqref{compu010} and \eqref{compu3} we obtain
\begin{align}\label{compu30}
D(f,f) 
& =(a-\a_n) \log \left(\tfrac{a}{w^n}\right) + (b-\a_0) \log\left(\tfrac{b}{(1-w)^n}\right)+\nonumber\\
& \qquad + \b
\log\left(\tfrac{c}{(1-w)^n}\right) + \g
\log\left(\tfrac{1-w}{w}\right),
\end{align}
where the new coefficients $\b,\g$ are given by
$$
\b = 2w(1-w)(1-2^{-n+1}) -(1-\a_0-\a_n) \,,\qquad \g =n\a_n - nw(2^{-n+1}(1-w)+w) .
$$ 
In order to estimate the ratio $D(f,f)/\ent(f)$ we need to control $\a_0$ and $\a_n$. These can be computed as follows. 
For any $A\subset [n]$, $u,v\in[0,1]$, write
\begin{align}\label{bern2}
B(u)_A\otimes B(v)_{A^c} = B(u_Av_{A^c})\,,
\end{align}
where $B(u_Av_{A^c})$ denotes the product of independent Bernoulli measures such that the sites in $A$ have parameter $u$ and the sites in $A^c$ have parameter $v$. 
It follows that
\begin{align*}
\wt p &= \textstyle{\sum_A\nu(A)\big\{w^4 B(1) + (1-w)^4 B(0) + 4w^2(1-w)^2
B(1/2)  + w^2(1-w)^2B(1_A0_{A^c})
}\\ &\qquad \quad +w^2(1-w)^2B(0_A1_{A^c})+2w(1-w)^3B(0_A(1/2)_{A^c}) +2w(1-w)^3B((1/2)_{A}0_{A^c}) \\ &\qquad\qquad\quad +2w^3(1-w)B(1_A(1/2)_{A^c}) + 2w^3(1-w)B((1/2)_{A}1_{A^c})
\big\}\,.
\end{align*}
From this expression it is not hard to check that $$\a_n=O(w^2)\,,\qquad\a_0=1-4w +2w\D_\nu + O(w^2),$$ where
$\D_\nu$ is given by \eqref{delta}. 
Moreover, $\b = -2w + 2w\D_\nu + O(w^2)$ and $\g = O(nw^2)$. Therefore, 
\begin{gather*}
(a-\a_n)\log \left(\tfrac{a}{w^n}\right) =O(2^{-n})\,;\qquad (b-\a_0)\log\left(\tfrac{b}{(1-w)^n}\right)=O(2^{-n})\,;\\
\b\log\left(\tfrac{c}{(1-w)^{n}}\right) = -4(1-\D_\nu)n2^{-n}\log 2 + O(2^{-n})\,;\qquad 
\g\log\left(\tfrac{1-w}{w}\right)= O(2^{-n}).
\end{gather*}
From \eqref{compu1} and \eqref{compu30} it follows that
\begin{align}\label{compu4}
\d(\mu,\nu)\leq \frac{D(f,f)}{\ent(f)}=\frac{4(1-\D_\nu)}n\,+ O(n^{-2}).
\end{align}
This concludes the proof of Proposition~\ref{sharp} and of Theorem~\ref{main}.
\end{proof}

\subsection{Proof of Corollary \ref{corodio}}
Let $\nu$ be one of the four recombination distributions and fix an arbitrary initial state $p\in\cP(\cX)$.
Let $\pi=\otimes_{i=1}^np_i$ be the associated product measure. If $\pi$ fails to satisfy $\pi(\si)>0$ for all $\si\in\cX$ we can always redefine $\cX$ so that this holds. Thus, without loss of generality, we assume that $\pi\in\cP_+(\cX)$, and we can work within the general framework developed in Section \ref{setup} with $\mu=\pi$. From Proposition \ref{entroprop} and the observations in Lemma \ref{le1} and Lemma \ref{le2} we know that
\begin{align}\label{ex1}
H(p_t\tc\pi)\leq e^{-\k(\nu)\,t}H(p\tc \pi)
\end{align}
for all $t\geq 0$, where  $\k(\nu)$ is the constant computed in Theorem \ref{main2}. 
This proves the upper bound \eqref{dente1}. 
We turn to the corresponding statement in discrete time.
The following lemma proves the desired upper bound \eqref{dente}.
Let $p^{(k)}= \Psi[p^{(k-1)}]$, $k\in\bbN$, $p^{(0)}=p$, where 
$$
\Psi[p] = \sum_{A}\nu(A)\,(p_A\otimes p_{A^c}).
$$ 
 \begin{lemma}
\label{leo2}
For any initial state $p\in\cP(\cX)$, 
\begin{equation}\label{lent5d}
H(p^{(k)}\tc\pi)\leq (1-\k(\nu))\,H(p^{(k-1)}\tc \pi)\,,
\end{equation}
for all $k\in\bbN$, where $\k(\nu)$ is the constant in Theorem \ref{main2} and $\pi=\otimes_{i=1}^np_i$.
\end{lemma}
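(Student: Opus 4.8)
The plan is to reduce to the single-step estimate ($k=1$) and then iterate. First I would record that the map $\Psi$ preserves single-site marginals: for each $i$ the $i$-marginal of $p_A\otimes p_{A^c}$ equals $p_i$ whether $i\in A$ or $i\in A^c$, so $(\Psi[p])_i=\sum_A\nu(A)p_i=p_i$. Consequently every iterate $p^{(k-1)}$ has the same single-site marginals as $\pi$, i.e.\ $f:=p^{(k-1)}/\pi\in\cS_\pi$. It therefore suffices to prove $H(\Psi[q]\tc\pi)\le(1-\k(\nu))H(q\tc\pi)$ for an arbitrary $q$ with $q/\pi\in\cS_\pi$; the full statement then follows by applying this one-step bound successively, since marginal preservation keeps each $p^{(k-1)}/\pi$ in $\cS_\pi$.

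For the one-step bound I would set $g=\Psi[q]/\pi=\sum_A\nu(A)f_Af_{A^c}$ (writing $f=q/\pi$), so that $H(\Psi[q]\tc\pi)=\ent(g)$. Each $f_Af_{A^c}$ is a density, since by the product structure $\pi[f_Af_{A^c}]=\pi_A[f_A]\,\pi_{A^c}[f_{A^c}]=1$, and hence so is $g$. The key step is a convexity (Jensen) argument: restricted to densities the functional $\ent$ equals $\pi[\,\cdot\,\log\,\cdot\,]$, which is convex because $t\mapsto t\log t$ is convex. As $g$ is the $\nu$-average of the densities $f_Af_{A^c}$ and $\sum_A\nu(A)=1$, convexity yields
$$\ent(g)\le\sum_A\nu(A)\,\ent(f_Af_{A^c}).$$

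Next I would use that $f_A$ depends only on $\si_A$, that $f_{A^c}$ depends only on $\si_{A^c}$, and that $\pi=\pi_A\otimes\pi_{A^c}$, so that the entropy of the product factorizes additively:
$$\ent(f_Af_{A^c})=\pi_A[f_A\log f_A]+\pi_{A^c}[f_{A^c}\log f_{A^c}]=\ent(f_A)+\ent(f_{A^c}).$$
Combining the two displays and invoking inequality \eqref{lent5}, established with constant $\k(\nu)$ in Theorem \ref{main2} (and using $\k(\pi,\nu)\ge\k(\nu)$ for the reference measure $\pi$ at hand), gives
$$\ent(g)\le\sum_A\nu(A)\bigl(\ent(f_A)+\ent(f_{A^c})\bigr)\le(1-\k(\nu))\,\ent(f),$$
which is precisely $H(\Psi[q]\tc\pi)\le(1-\k(\nu))H(q\tc\pi)$.

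The genuinely new ingredient compared with the continuous-time argument is the convexity step. In continuous time one works with $D(f,f)=\ent(f)-\sum_A\nu(A)\,\pi[f_Af_{A^c}\log f]$ together with the logarithmic bound of Lemma \ref{le2}, whereas in discrete time the updated state is literally the mixture $\sum_A\nu(A)f_Af_{A^c}$, so one must instead control the entropy of a mixture by the average of the entropies; this is exactly what Jensen provides. The remaining ingredients — preservation of marginals, additivity of $\ent$ over the product $f_Af_{A^c}$, and the appeal to Theorem \ref{main2} — are routine, so I expect the convexity reduction to be the only point requiring care.
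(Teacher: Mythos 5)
Your proof is correct and follows essentially the same route as the paper: reduce to a one-step bound via marginal preservation, write $\Psi[q]/\pi$ as the mixture $\sum_A\nu(A)f_Af_{A^c}$, apply convexity of $x\mapsto x\log x$ (Jensen), use the product factorization $\ent(f_Af_{A^c})=\ent(f_A)+\ent(f_{A^c})$, and invoke Theorem \ref{main2}. The paper's proof is exactly this argument, so no further comparison is needed.
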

\begin{proof}
Since the map $\Psi$ preserves the marginals, it suffices to show that, for any $p$ with the same marginals as $\pi$, one has
\begin{equation}\label{lent5d1}
H(\Psi[p]\tc\pi)\leq (1-\k(\nu))\,H(p\tc \pi)\,,
\end{equation}
Set $f=p/\pi$ and $f_\Psi= \Psi[p]/\pi$. Now, $ f_\Psi = \sum_{A}\nu(A)f_Af_{A^c}$. Convexity of the
function $x\mapsto x\log x$, $x\geq 0$,
shows that
\begin{equation}\label{convei}
\pi[f_\Psi\log f_\Psi]\leq \sum_{A}\nu(A)\pi[f_Af_{A^c}\log f_Af_{A^c}]= \sum_A\nu(A)[\ent(f_A)+\ent(f_{A^c})].
\end{equation}
Therefore, by Theorem \ref{main2}, 
\begin{equation}\label{lent5d2}
\pi[f_\Psi\log f_\Psi]\leq (1-\k(\nu))\,\pi[f\log f],
\end{equation}
which coincides with \eqref{lent5d1}. 
\end{proof}
To conclude the proof of Corollary \ref{corodio} we need to check the bounds in \eqref{dente3}.
To this end, take $\cX=\{0,1\}^n$ and $p$ the probability measure defined in \eqref{pchoice}. If $f=p/\pi$, then  the proof of Proposition \ref{sharp} shows that 
\begin{equation}\label{varprino}
D(f,f)\leq \g(\nu)\,\ent(f)\,,\quad \g(\nu):=\left(\tfrac4n(1-\D_\nu)+O(n^{-2})\right).
\end{equation}
By Proposition \ref{genstat} part 1, one has 
$$
\at{\dert H(p_t\tc\pi)}{t=0^+} = - D(f,f)\geq - \g(\nu)\,H(p\tc\pi).
$$
On the other hand, in discrete time one has 
$$
H(p^{(1)}\tc\pi) = \pi[f_\Psi\log f_\Psi]. 
$$
From the variational principle for relative entropy,
\begin{equation}\label{varprin}
\pi[f_\Psi\log f_\Psi]\geq \pi[f_\Psi\log f] = \sum_{A}\nu(A)\pi[f_A f_{A^c}\log f]\,.
\end{equation}
From \eqref{lent5}, \eqref{varprin} and \eqref{varprino} it follows that 
$$
H(p^{(1)}\tc\pi) \geq   H(p\tc\pi) - D(f,f)\geq \left(1- \g(\nu)\right)\,H(p\tc \pi)\,,
$$
which proves \eqref{dente3}. Finally, the estimate  
\begin{equation}\label{prin}
\g(\nu) \leq C\k(\nu)
\end{equation}
follows easily from \eqref{deltan1}-\eqref{deltan}. Because of the $O(n^{-2})$ correction in $\g(\nu)$ one needs to require $q\geq n^{-2}$ in the case of the Bernoulli($q$) model. 
\bibliographystyle{plain}

\bibliography{recombinations}

\end{document}